\providecommand{\keywords}[1]
{
	\footnotesize
	\textbf{{Keywords.}} #1
}
\providecommand{\MSC}[1]
{
	\footnotesize
	\textbf{{MSC codes.}} #1
}
\definecolor{niceblue}{rgb}{0.25, 0.4, 0.96}
\theoremstyle{plain} 
\newtheorem{theorem}{Theorem}[section]
\newtheorem{proposition}[theorem]{Proposition} 
\newtheorem{lemma}[theorem]{Lemma}
\theoremstyle{definition}
\theoremstyle{remark}
\newtheorem{remark}[theorem]{Remark} 
\newcommand{\C}{\mathbb{C}}
\newcommand{\R}{\mathbb{R}} 
\newcommand{\Hso}{\mathrm{H}}
\DeclareMathOperator{\dive}{\mathrm{div}}
\DeclareMathOperator{\pa}{\partial}
\renewcommand{\norm}[1]{\lVert#1\rVert}
\newcommand{\scalar}[1]{\langle#1\rangle}
\newcommand{\ic}{\mathrm{i}} 
\newcommand{\sigmar}{\sigma_{\mathrm{r}}}
\newcommand{\dx}{\,\mathrm{dx}}
\title{On the convergence analysis of one-shot inversion methods}
\author[1]{Marcella Bonazzoli}
\author[1]{Houssem Haddar}
\author[1,2]{Tuan-Anh Vu}
\affil[1]{Inria, UMA, ENSTA Paris, Institut Polytechnique de Paris, 91120 Palaiseau, France}
\affil[2]{Institute of Mathematics, Vietnam Academy of Science and Technology, Vietnam}
\date{Revised and resubmitted on April 10, 2024}
\begin{document}
	\maketitle
	
	\begin{abstract}
		When an inverse problem is solved by a gradient-based optimization algorithm, the corresponding forward and adjoint problems, which are introduced to compute the gradient, can be also solved iteratively. The idea of iterating at the same time on the inverse problem unknown and on the forward and adjoint problem solutions yields the concept of one-shot inversion methods. We are especially interested in the case where the inner iterations for the direct and adjoint problems are incomplete, that is, stopped before achieving a high accuracy on their solutions. Here, we focus on general linear inverse problems and generic fixed-point iterations for the associated forward problem. We analyze variants of the so-called multi-step one-shot methods, in particular semi-implicit schemes with a regularization parameter. 
		We establish sufficient conditions on the descent step for  convergence, by studying the eigenvalues of the block matrix of the coupled iterations. Several numerical experiments are provided to illustrate the convergence of these methods in comparison with the classical gradient descent, where the forward and adjoint problems are solved exactly by a direct solver instead. We observe that very few inner iterations are enough to guarantee good convergence of the inversion algorithm, even in the presence of noisy data.    
	\end{abstract}
	
	\noindent\keywords{inverse problems, one-shot methods, convergence analysis, parameter identification}
	
	\medskip
	\noindent\MSC{35R30, 65F10, 65N21}
	
	\normalsize
	\section{Introduction}
	
	For large-scale inverse problems, which often arise in real life applications, the solution of the corresponding forward and adjoint problems is generally computed using an iterative solver, such as preconditioned fixed point or Krylov subspace methods, rather than exactly by a direct solver, such as LU-type solvers (see e.g.~\cite{TOURNIER2019, audibert2023}). Indeed, the corresponding linear systems could be too large to be handled with direct solvers because of their high memory requirement. In addition, iterative solvers are easier to parallelize on many cores for time speed-up. By coupling the iterative solver with a gradient-based optimization iteration, the idea of \emph{one-step one-shot methods} is to iterate at the same time on the forward problem solution (the state variable), the adjoint problem solution (the adjoint state) and on the inverse problem unknown (the parameter or design variable).  
	If two or more inner iterations are performed on the state and adjoint state before updating the parameter (by starting from the previous iterates as initial guess for the state and adjoint state), we speak of \emph{multi-step one-shot methods}. 
	Our goal is to rigorously analyze the convergence of such inversion methods. In particular, we are interested in those schemes where the inner iterations on the direct and adjoint problems are incomplete, i.e.~stopped before achieving convergence. Indeed, solving the forward and adjoint problems exactly by direct solvers or very accurately by iterative solvers could be very time-consuming with little improvement in the accuracy of the inverse problem solution.   
	
	The concept of one-shot methods was first introduced by Ta'asan \cite{taasan91} for optimal control problems. Based on this idea, a variety of related methods, such as the all-at-once methods, where the state equation is included in the misfit functional, were developed for aerodynamic shape optimization, see for instance \cite{taasan92, shenoy97, hazra05, schulz09, gauger09} and the literature review in the introduction of \cite{schulz09}. All-at-once approaches to inverse problems for parameter identification were studied in, e.g., \cite{haber01, burger02, kaltenbacher14, kaltenbacher16, kaltenbacher17, tram2019, tram2024}. An alternative method, called Wavefield Reconstruction Inversion (WRI), was introduced for seismic imaging in \cite{leeuwen13}, as an improvement of the classical Full Waveform Inversion (FWI) \cite{tarantola82}. WRI is a penalty method which combines the advantages of the all-at-once approach with those of the reduced approach (where the state equation represents a constraint and is enforced at each iteration, as in FWI), and was extended to more general inverse problems in \cite{leeuwen15}. 
	
	Few convergence proofs, especially for the multi-step one-shot methods, are available in the literature. In particular, for non-linear design optimization problems, Griewank \cite{griewank06} proposed a version of one-step one-shot methods where a Hessian-based preconditioner is used in the design variable iteration. 
	The author proved conditions to ensure that the real eigenvalues of the Jacobian of the coupled iterations are smaller than $1$, but these are just necessary and not sufficient conditions to exclude real eigenvalues smaller than $-1$. In addition, no condition to also bound complex eigenvalues below $1$ in modulus was found, and multi-step methods were not investigated. 
	In \cite{hamdi09, hamdi10, gauger12} an exact penalty function of doubly augmented Lagrangian type was introduced to coordinate the coupled iterations, and global convergence of the proposed optimization approach was proved under some assumptions. This particular one-step one-shot approach was later extended to time-dependent problems in \cite{guenther16}.
	
	In this work, we consider variants of multi-step one-shot methods where the forward and adjoint problems are solved using fixed point iterations and the inverse problem is solved using gradient descent methods. In particular, we analyze semi-implicit schemes with a regularization parameter. This is a preliminary work where we focus on (discretized) linear inverse problems. The only basic assumptions we require are the uniqueness of the inverse problem solution and the convergence of the fixed point iteration for the forward problem. To analyze the convergence of the coupled iterations we study the real and complex eigenvalues of the block iteration matrix. We prove that if the descent step is small enough, then the considered one-shot methods converge.  Moreover, the upper bounds for the descent step in this sufficient condition are explicit in the number of inner iterations, in the norms of the operators involved in the problem, and in the regularization parameter.  
	Note that previously in our research report \cite{bon22} we studied (shifted) explicit schemes with no regularization, while here we include a regularized cost functional and analyze semi-implicit schemes. Moreover, for the particular scalar case, in \cite{bon22} we established sufficient and also necessary convergence conditions on the descent step. In the literature \cite{griewank06,hamdi09, hamdi10, gauger12} mentioned above, only the one-step one-shot algorithm has been considered and the analysis for that scheme was far from complete as it was only known that the real eigenvalues of the iteration system has modulus less than one. In the current work, we study the eigenvalues in the whole complex plane and consider also the case of multiple inner iterations on the state variable. In addition, we include the possibility of using some regularization terms for the inverse variable.
	
	This paper is structured as follows. In Section~\ref{sec:intro-k-shot}, we introduce the principle of multi-step one-shot methods and define two variants of these algorithms. 
	Since their analysis is similar, we shall focus on one of them, namely the semi-implicit scheme.
	Then, in Section~\ref{sec:seim-1-shot}, respectively Section~\ref{sec:seim-k-shot}, we analyze the convergence of one-step one-shot methods, respectively multi-step one-shot methods: first, we establish eigenvalue equations for the block matrices of the coupled iterations, then we derive sufficient convergence conditions on the descent step by studying the location of the eigenvalues in the complex plane. 
	Finally, in Section~\ref{sec:num-exp} we do several numerical tests on the performance of the different algorithms on a toy 2D Helmholtz inverse problem. These tests are carried out in four cases. In the first case, the measurements are noise-free and the parameter that we desire to reconstruct is discretized in a low-dimensional space, while in the second case, the measurements are affected by noise and the reconstructed parameter is discretized in a higher dimensional space. 
	In the third and fourth cases, we test the robustness with respect to the size of the discretized problem, and the dependence on the norm of the iteration matrix of the forward problem. In particular, we observe that very few inner iterations are enough to guarantee good convergence of the inversion algorithms, even in the presence of noisy data. 
	
	Throughout this work, $\scalar{\cdot,\cdot}$ indicates the usual Hermitian scalar product in $\C^n$, that is $\scalar{x,y} \coloneqq \overline{y}^\intercal x, \forall x,y\in\C^n$, and $\norm{\cdot}$ the vector/matrix norms induced by $\scalar{\cdot,\cdot}$. We denote by $A^*=\overline{A}^\intercal$ the adjoint operator of a matrix $A\in\C^{m\times n}$, and likewise by $z^*=\overline{z}$ the conjugate of a complex number $z$. 
	The identity matrix is always denoted by $I$, whose size is understood from context. 
	Finally, for a matrix $T\in\C^{n\times n}$, we denote by $\rho(T)$ the spectral radius of $T$, and when $\rho(T)<1$, we define
	\[
	s(T) \coloneqq \sup_{z\in\C, |z|\ge 1}\norm{\left(I-T/z\right)^{-1}},
	\]
	which is further studied in Appendix \ref{app:lems}.
	
	\section{Multi-step one-shot inversion methods}
	\label{sec:intro-k-shot}
	
	We focus on (discretized) linear inverse problems, which correspond to a \emph{direct (or forward) problem} of the form: seek $u \equiv u(\sigma)$ such that 
	\begin{equation}
		\label{prb-direct}
		u=Bu+M\sigma+F
	\end{equation}
	where $u\in\R^{n_u}$, $\sigma\in\R^{n_\sigma}$, $B\in\R^{n_u \times n_u}$, $M\in\R^{n_u \times n_\sigma}$ and $F\in\R^{n_u}$.  
	Here $I-B$ is the invertible matrix associated with the direct problem (e.g.~obtained after discretization of a PDE model and applying a preconditioner to the linear system), with parameter $\sigma$. Equation~\eqref{prb-direct} is also referred to as the \emph{state equation} and $u$ is the \emph{state variable}. 
	Given $\sigma$, we assume that one solves for $u$ by a fixed point iteration
	\begin{equation}
		\label{iter-prb-direct}
		u_{\ell+1}=Bu_\ell+M\sigma+F, \quad \ell=0,1,\dots
	\end{equation}
	We indeed assume  $\rho(B)<1$ so that the fixed point iteration \eqref{iter-prb-direct} converges for any initial guess $u_0$ (see e.g. \cite[Theorem 2.1.1]{greenbaum97}). 
	Measuring $g=Hu(\sigma)$, where $H\in\R^{n_g\times n_u}$, we consider the \emph{linear inverse problem} of retrieving $\sigma$ from the knowledge of $g$. 
	Let us set $A\coloneqq H(I-B)^{-1}M$. The inverse problem can be synthetically written as $A\sigma=g-H(I-B)^{-1}F$, which amounts to inverting the ill-conditioned matrix $A$. We shall assume in the following the uniqueness of the solution for this inverse problem, which is equivalent to the injectivity of $A$.
	In summary, we set
	\begin{equation}
		\label{direct-and-inv-prb}
		\begin{array}{lc}
			\mbox{direct problem:} & u=Bu+M\sigma+F,\\
			\mbox{inverse problem:} & \mbox{measure }g=Hu(\sigma),\mbox{ retrieve }\sigma
		\end{array}
	\end{equation}
	with the assumptions:
	\begin{equation}
		\label{hypo}
		\rho(B)<1, \quad H(I-B)^{-1}M \mbox{ is injective}.
	\end{equation}	
	\begin{remark}
		\label{rk:complexB}
		Considering real-valued matrices $B$ and $M$ is not a restrictive assumption. Indeed, the case of complex-valued matrices can be rewritten as a system of real-valued equations by doubling the size of the linear system (see \cite[Section 5]{bon22}).
	\end{remark}
	To solve the inverse problem we write its regularized least squares formulation: given some measurements $g$, we seek the \emph{regularized solution} $\sigma_\alpha^\mathrm{ex}$ defined by
	\begin{equation}
		\label{eq:J}
		\sigma_\alpha^\mathrm{ex}= \mathrm{argmin}_{\sigma\in\R^{n_\sigma}} J(\sigma) \quad \mbox{ where } J(\sigma) \coloneqq \frac{1}{2}\norm{Hu(\sigma)-g}^2+\frac{\alpha}{2}\norm{\sigma}^2, \; \alpha\ge0.
	\end{equation}
	In case we have exact measurements, i.e.~$g=Hu(\sigma^\mathrm{ex})$, one can take $\alpha=0$, so that $\sigma_\alpha^\mathrm{ex}=\sigma^\mathrm{ex}$. However, in practice, the measurements are often affected by noise, hence in general we cannot reconstruct $\sigma^\mathrm{ex}$ but its approximation $\sigma_\alpha^\mathrm{ex}$. 
	Now we solve the minimization problem in \eqref{eq:J}.
	Using the classical Lagrangian technique with real scalar products, we introduce the \emph{adjoint state} $p \equiv p(\sigma)$, which is the solution of 
	$$p=B^*p+H^*(Hu-g)$$
	and allows us to compute the gradient of the cost functional as
	$$\nabla J(\sigma)=M^*p(\sigma)+\alpha\sigma.$$  
	The classical gradient descent algorithm then reads 
	\begin{equation}
		\label{alg:usualgd}
		\mbox{\textbf{usual gradient descent:}}\quad
		\begin{cases}
			\sigma^{n+1}=\sigma^n-\tau M^*p^n-\tau\alpha\sigma^n, \\
			u^n=Bu^n+M\sigma^n+F,\\
			p^n=B^*p^n+H^*(Hu^n-g),
		\end{cases}
	\end{equation}
	where $\tau>0$ is the descent step size, and the state and adjoint state equations are solved exactly at each iteration step for $\sigma$. Notice that when $F=0$, \eqref{alg:usualgd} is equivalent to $\sigma^{n+1}=\sigma^n-\tau A^*(A\sigma^n-g)-\tau\alpha\sigma^n$. One can also consider a slightly different version where an implicit scheme is applied to the regularization term leading to the following semi-implicit gradient scheme
	\begin{equation}
		\label{alg:seimgd}
		\mbox{\textbf{semi-implicit gradient descent:}}\quad
		\begin{cases}
			\sigma^{n+1}=\sigma^n-\tau M^*p^n-\tau\alpha\sigma^{n+1}, \\
			u^n=Bu^n+M\sigma^n+F,\\
			p^n=B^*p^n+H^*(Hu^n-g).
		\end{cases}
	\end{equation}
	It can be shown that both algorithms converge for sufficiently small $\tau>0$: for any initial guess, \eqref{alg:usualgd} converges if and only if
	$
	\tau<\frac{2}{\rho(A^*A)+\alpha}
	$
	and \eqref{alg:seimgd} converges if and only if
	$
	(\rho(A^*A)-\alpha)\tau<2.
	$
	These results indicate in particular that we gain more stability with the semi-implicit scheme.
	
	We are interested in methods where the direct and adjoint problems are rather solved iteratively as in \eqref{iter-prb-direct}, and where we iterate at the same time on the forward problem solution and the inverse problem unknown: such methods are called \emph{one-shot methods}. More precisely, we are interested in two variants of \emph{multi-step one-shot methods}, defined as follows. Let $n$ be the index of the (outer) iteration on $\sigma$.
	We update $\sigma^{n+1}=\sigma^n-\tau M^*p^n-\tau\alpha\sigma^n$ as in gradient descent methods (or respectively, $\sigma^{n+1}=\sigma^n-\tau M^*p^n-\tau\alpha\sigma^{n+1}$ as in semi-implicit gradient descent methods), but the state and adjoint state equations are now solved by a fixed point iteration method, using just \emph{$k$ inner iterations}
	and as initial guess we naturally choose 
	the information from the previous (outer) step. We then get the two following variants of multi-step one-shot algorithms
	\begin{equation}\label{alg:k-shot}
		k\mbox{\textbf{-step one-shot:}}\quad
		\begin{cases}
			\sigma^{n+1}=\sigma^n-\tau M^*p^n-\tau\alpha\sigma^n, \\
			u^{n+1}_0 = u^n, p^{n+1}_{0} = p^n, \\
			\mbox{for }\ell=0,1,\dots,k-1: \\
			\quad\left| \begin{array}{l}
				u^{n+1}_{\ell+1}=Bu^{n+1}_\ell+M {\sigma^{n+1}}+F,\\
				p^{n+1}_{\ell+1}=B^*p^{n+1}_\ell+H^*(Hu^{n+1}_\ell -g),
			\end{array}\right. \\
			u^{n+1}\coloneqq  u^{n+1}_k, p^{n+1}\coloneqq  p^{n+1}_k
		\end{cases}
	\end{equation}
	and 
	\begin{equation}
		\label{alg:seim-k-shot}
		\text{
				\textbf{semi-implicit}
				$k$\textbf{-step one-shot:}
		}\quad
		\begin{cases}
			\sigma^{n+1}=\sigma^n-\tau M^*p^n-\tau\alpha\sigma^{n+1}, \\
			u^{n+1}_0 = u^n, p^{n+1}_{0} = p^n, \\
			\mbox{for } \ell=0,1,\dots,k-1: \\
			\quad\left| \begin{array}{l}
				u^{n+1}_{\ell+1}=Bu^{n+1}_\ell+M {\sigma^{n+1}}+F,\\
				p^{n+1}_{\ell+1}=B^*p^{n+1}_\ell+H^*(Hu^{n+1}_\ell -g),
			\end{array}\right. \\
			u^{n+1} \coloneqq  u^{n+1}_k, p^{n+1} \coloneqq  p^{n+1}_k.
		\end{cases}
	\end{equation}
	In particular, when $k=1$, we obtain the two following algorithms
	\begin{equation}
		\label{alg:1-shot}
		\mbox{\textbf{one-step one-shot:}}\quad\begin{cases}
			\sigma^{n+1}=\sigma^n-\tau M^*p^n-\tau\alpha\sigma^n, \\
			u^{n+1} = Bu^n+M\sigma^{n+1}+F,\\
			p^{n+1} = B^*p^n+H^*(Hu^n-g)
		\end{cases}
	\end{equation}
	and 
	\begin{equation}
		\label{alg:seim-1-shot}
		\text{
				\textbf{semi-implicit one-step one-shot:}
		}\quad
		\begin{cases}
			\sigma^{n+1}=\sigma^n-\tau M^*p^n-\tau\alpha\sigma^{n+1},\\
			u^{n+1} = Bu^n+M\sigma^{n+1}+F,\\
			p^{n+1} = B^*p^n+H^*(Hu^n-g).
		\end{cases}
	\end{equation}
	Note that when $k\rightarrow\infty$, the $k$-step one-shot method \eqref{alg:k-shot} formally converges to the usual gradient descent \eqref{alg:usualgd}, while the semi-implicit $k$-step one-shot method \eqref{alg:seim-k-shot} formally converges to the semi-implicit gradient descent \eqref{alg:seimgd}. Since the analysis of the two schemes \eqref{alg:k-shot} and \eqref{alg:seim-k-shot} can be done following similar arguments, we choose to concentrate on only one of them, namely the semi-implicit scheme. We refer to \cite{bon22} for the analysis 
	in the case $\alpha=0$ (for which the two schemes coincide).
	
	We first analyze the one-step one-shot method ($k=1$) in Section~\ref{sec:seim-1-shot} and then the multi-step one-shot method ($k\ge 1$) in Section~\ref{sec:seim-k-shot}. 
	
	
	\section{Convergence of the one-step one-shot method ($k=1$)}
	\label{sec:seim-1-shot}
	
	\subsection{Block iteration matrix and eigenvalue equation}
	\label{subsec:seim-1-shot:eigen-eq}
	
	To analyze the convergence of the semi-implicit one-step one-shot method \eqref{alg:seim-1-shot}, we first express \\
	$(\sigma^{n+1},u^{n+1},p^{n+1})$ in terms of $(\sigma^n,u^n,p^n)$, by inserting the expression for $\sigma^{n+1}$ into the iteration for $u^{n+1}$ in \eqref{alg:seim-1-shot}, so that system \eqref{alg:seim-1-shot} is rewritten as
	\begin{equation}
		\label{seim-1-shot:rewrite}
		\begin{cases}
			\sigma^{n+1}=\frac{1}{1+\tau\alpha}\sigma^n-\frac{\tau }{1+\tau\alpha}M^*p^n,\\
			u^{n+1}=Bu^n+\frac{1}{1+\tau\alpha}M\sigma^n-\frac{\tau }{1+\tau\alpha}MM^*p^n+F,\\
			p^{n+1}=B^*p^n+H^*Hu^n-H^*g.\\
		\end{cases}
	\end{equation}
	Now, we consider the errors $(\sigma^n-\sigma^\mathrm{ex}_\alpha,u^n-u(\sigma^\mathrm{ex}_\alpha),p^n-p(\sigma^\mathrm{ex}_\alpha))$ with respect to the regularized solution at the $n$-th iteration, and, by abuse of notation, we designate them by $(\sigma^n,u^n,p^n)$. We obtain that these errors satisfy 
	\begin{equation}
		\label{seim-1-shot:sys-err}
		\begin{cases}
			\sigma^{n+1}=\frac{1}{1+\tau\alpha}\sigma^n-\frac{\tau}{1+\tau\alpha}M^*p^n,\\
			u^{n+1}=Bu^n+\frac{1}{1+\tau\alpha}M\sigma^n-\frac{\tau}{1+\tau\alpha}MM^*p^n,\\
			p^{n+1}=B^*p^n+H^*Hu^n,\\
		\end{cases}
	\end{equation}
	\noindent or equivalently, by putting in evidence the block iteration matrix
	\begin{equation}
		\label{seim-1-shot:itermat}
		\begin{bmatrix}
			p^{n+1}\\ u^{n+1}\\ \sigma^{n+1}
		\end{bmatrix}=\begin{bmatrix}
			B^* & H^*H & 0\\
			-\frac{\tau}{1+\tau\alpha} MM^* & B & \frac{1}{1+\tau\alpha}M \\
			-\frac{\tau}{1+\tau\alpha}M^* & 0 & \frac{1}{1+\tau\alpha}I
		\end{bmatrix}
		\begin{bmatrix}
			p^{n}\\ u^{n}\\ \sigma^{n}
		\end{bmatrix}.
	\end{equation}
	Now recall that a fixed point iteration converges if and only if the spectral radius of its iteration matrix is less than $1$. Therefore in the following proposition we establish an eigenvalue equation for the iteration matrix of the semi-implicit one-step one-shot method. 
	
	\begin{proposition}
		\label{prop:seim-1-shot:eigen-eq} 
		Assume that $\lambda\in\C, |\lambda|\ge 1$ is an eigenvalue of the iteration matrix in \eqref{seim-1-shot:itermat}. If $\lambda\in\C$, $\lambda\notin\mathrm{Spec}(B)$ then $\exists \, y\in\C^{n_\sigma}, \norm{y}=1$ such that:
		\begin{equation}\label{eq:seim-1-shot:ori-eq-eigen}
			(1+\tau\alpha)\lambda-1+\tau\lambda\scalar{M^*(\lambda I-B^*)^{-1}H^*H(\lambda I-B)^{-1}My,y}=0.
		\end{equation}
		In particular, $\lambda=1$ is not an eigenvalue of the iteration matrix.
	\end{proposition}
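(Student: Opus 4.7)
My plan is to write the eigenvalue problem for the block matrix in \eqref{seim-1-shot:itermat} component-wise, and then eliminate the adjoint-state and state components in favour of the parameter component. Let $\lambda\in\C$ with $|\lambda|\ge 1$ and $\lambda\notin\spec(B)$ be an eigenvalue with eigenvector $(p,u,\sigma)^\intercal\neq 0$; the three block rows read
\[
\lambda p = B^*p + H^*Hu,\quad \lambda u = -\tfrac{\tau}{1+\tau\alpha}MM^*p + Bu + \tfrac{1}{1+\tau\alpha}M\sigma,\quad \lambda\sigma = -\tfrac{\tau}{1+\tau\alpha}M^*p + \tfrac{1}{1+\tau\alpha}\sigma.
\]
My first step would be to rule out $\sigma=0$: if $\sigma=0$, the third row (using $\tau>0$) forces $M^*p=0$, the second then reduces to $(\lambda I-B)u=0$, which gives $u=0$ by the hypothesis $\lambda\notin\spec(B)$, and the first then gives $p=0$, contradicting that the eigenvector is nonzero. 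So $y:=\sigma/\norm{\sigma}$ is well defined.

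Next, I would eliminate $p$ and $u$. Because $B$ is real, $\lambda\notin\spec(B)$ is equivalent to $\lambda\notin\spec(B^*)$; the first row therefore inverts to $p=(\lambda I-B^*)^{-1}H^*Hu$, and the second to $u=(\lambda I-B)^{-1}\bigl[\tfrac{1}{1+\tau\alpha}M\sigma - \tfrac{\tau}{1+\tau\alpha}MM^*p\bigr]$. Substituting and left-multiplying by $M^*$, with the shorthand $K:= M^*(\lambda I-B^*)^{-1}H^*H(\lambda I-B)^{-1}M$, produces $\bigl((1+\tau\alpha)I + \tau K\bigr)M^*p = K\sigma$. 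Rewriting the third row as $((1+\tau\alpha)\lambda-1)\sigma = -\tau M^*p$ and plugging in, the factor $(1+\tau\alpha)$ cancels, and a straightforward rearrangement leaves
\[
\bigl((1+\tau\alpha)\lambda-1\bigr)\sigma + \tau\lambda K\sigma = 0.
\]
Taking the Hermitian inner product with $y$ then yields exactly \eqref{eq:seim-1-shot:ori-eq-eigen}.

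For the final claim that $\lambda=1$ cannot be an eigenvalue, I would substitute $\lambda=1$ into \eqref{eq:seim-1-shot:ori-eq-eigen}. Note that $1\notin\spec(B)$ since $\rho(B)<1$, so the first part of the proposition applies and yields a unit $y$ with $\tau\alpha + \tau\norm{Ay}^2 = 0$, where $A:= H(I-B)^{-1}M$ (at $\lambda=1$, $K$ reduces to $A^*A$). Since $\tau>0$, $\alpha\ge 0$ and $\norm{Ay}^2\ge 0$, this identity forces $Ay=0$; combined with the injectivity of $A$ assumed in \eqref{hypo}, we conclude $y=0$, which contradicts $\norm{y}=1$. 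No real obstacle is expected: the main care required is the scalar bookkeeping of the prefactors $1/(1+\tau\alpha)$ and $\tau/(1+\tau\alpha)$ during the elimination, and checking that the various divisions (by $(1+\tau\alpha)$, which is nonzero since $\tau,\alpha\ge 0$, and implicitly by $\norm{\sigma}$, licit by step one) are legitimate.
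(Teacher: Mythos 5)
Your proposal is correct and follows essentially the same route as the paper: write the eigenvalue problem blockwise, eliminate the adjoint state and state in favour of the parameter component via the resolvents $(\lambda I-B)^{-1}$ and $(\lambda I-B^*)^{-1}$, take the Hermitian inner product with the normalized parameter component, and refute $\lambda=1$ via the injectivity of $H(I-B)^{-1}M$. The only cosmetic difference is that the paper first substitutes the $\sigma$-equation into the $u$-equation (yielding $\lambda\tilde u=B\tilde u+\lambda My$ directly), which shortens the scalar bookkeeping you carry out explicitly, and it deduces $y\neq 0$ from the derived formulas rather than up front.
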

	\begin{proof}
		Since $\lambda\in\C$ is an eigenvalue of the iteration matrix, there exists a non-zero vector $(\tilde{p},\tilde{u},y)\in\C^{n_u+n_u+n_\sigma}$ such that 
			\begin{equation*}
				\begin{cases}
					\lambda y = \frac{1}{1+\tau\alpha}y-\frac{\tau}{1+\tau\alpha}M^*\tilde{p},\\
					\lambda\tilde{u}= B\tilde{u}+\frac{1}{1+\tau\alpha}My-\frac{\tau}{1+\tau\alpha} MM^*\tilde{p},\\
					\lambda\tilde{p}=B^*\tilde{p}+H^*H\tilde{u}.
				\end{cases}
			\end{equation*}
			By inserting the first equation into the second equation, we simplify this system of equations as
			\begin{equation}
				\label{eq:seim-1-shot:eigenvec}
				\begin{cases}
					\lambda y = \frac{1}{1+\tau\alpha}y-\frac{\tau}{1+\tau\alpha}M^*\tilde{p},\\
					\lambda\tilde{u}= B\tilde{u}+\lambda My,\\
					\lambda\tilde{p}=B^*\tilde{p}+H^*H\tilde{u}.
				\end{cases}
			\end{equation}
			The second equation in \eqref{eq:seim-1-shot:eigenvec} gives us directly $\tilde{u}$ in terms of $y$:
			\begin{equation}
				\label{eq:u(y)_1-shot}
				\tilde{u}=\lambda(\lambda I-B)^{-1}My.
			\end{equation}
			The third equation in \eqref{eq:seim-1-shot:eigenvec} at first gives us $\tilde{p}$ in terms of $\tilde{u}$:
			$$\lambda\tilde{p}=(\lambda I-B^*)^{-1}H^*H\tilde{u},$$
			then by combing with equation \eqref{eq:u(y)_1-shot}, we obtain $\tilde{p}$ in terms of $y$:
			\begin{equation}
				\label{eq:p(y)_1-shot}
				\tilde{p}=\lambda(\lambda I-B^*)^{-1}H^*H(\lambda I-B)^{-1}My.
			\end{equation}
			We also see that $y\neq 0$; indeed if $y=0$ then equations \eqref{eq:u(y)_1-shot} and \eqref{eq:p(y)_1-shot} yield $u=0$ and $p=0$, that is a contradiction. Next, inserting the expression of $\tilde{p}$ in equation \eqref{eq:p(y)_1-shot} back into the first equation in \eqref{eq:seim-1-shot:eigenvec}, we get
			$$\lambda y=\frac{1}{1+\tau\alpha}y-\frac{\tau}{1+\tau\alpha}\lambda M^*(\lambda I-B^*)^{-1}H^*H(\lambda I-B)^{-1}My,$$
			which leads to
			\begin{equation}\label{eq:seim-1-shot:pre-eq-eigen}
				\left[(1+\alpha\tau)\lambda-1\right]y+\tau\lambda M^*(\lambda I-B^*)^{-1}H^*H(\lambda I-B)^{-1}My=0.
			\end{equation}
			Finally, by taking scalar product of \eqref{eq:seim-1-shot:pre-eq-eigen} with $y$, then dividing by $\norm{y}^2$, we obtain \eqref{eq:seim-1-shot:ori-eq-eigen}.
		
		\color{black}{\noindent Now assume that $\lambda=1$ is an eigenvalue of the iteration matrix, then \eqref{eq:seim-1-shot:ori-eq-eigen} yields
			$$\alpha+\norm{H(I-B)^{-1}My}^2=0,$$
			which cannot be true due to the injectivity of $H(I-B)^{-1}M$.}
	\end{proof}
	
	In the following sections we will show that, for sufficiently small $\tau$, equation \eqref{eq:seim-1-shot:ori-eq-eigen}
	cannot hold if $|\lambda|\ge 1$, thus algorithm \eqref{alg:seim-1-shot} converges. 
	It is convenient to rewrite \eqref{eq:seim-1-shot:ori-eq-eigen} as
	\begin{equation}
		\label{eq:seim-1-shot:eq-eigen}
		(1+\tau\alpha)\lambda^2-\lambda+\tau\scalar{M^*\left(I-B^*/\lambda\right)^{-1}H^*H\left(I-B/\lambda\right)^{-1}My,y}=0.
	\end{equation}
	For the analysis we use auxiliary technical results proved in Appendix~\ref{app:lems}.
	
	\subsection{Location of the eigenvalues in the complex plane}
	\label{subsec:seim-1-shot:loca-eigen}
	
	We now turn our attention to the eigenvalues $\lambda$ for which \eqref{eq:seim-1-shot:eq-eigen} holds. We would like to derive conditions on the descent step $\tau$ such that all the eigenvalues lie inside the unit circle which would ensure the convergence for the scheme \eqref{alg:seim-1-shot}. We start with the simple case of real eigenvalues. 
	
	
	\begin{proposition}[Real eigenvalues]\label{prop:seim-1-shot:tau:real-lam}
		Equation \eqref{eq:seim-1-shot:eq-eigen} admits no solution $\lambda\in\R, \lambda\neq 1, |\lambda|\ge 1$ for all $\tau>0$.
	\end{proposition}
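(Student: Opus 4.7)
The plan is to exploit the fact that for real $\lambda$ the scalar product in equation \eqref{eq:seim-1-shot:eq-eigen} is actually a squared norm, which reduces the eigenvalue equation to a real quadratic in $\lambda$ that can be analyzed directly.

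First, I would check that the equation even makes sense: since $\rho(B)<1$ from assumption \eqref{hypo} and we restrict to $|\lambda|\ge 1$, we automatically have $\lambda\notin\mathrm{Spec}(B)$, so $(I-B/\lambda)^{-1}$ and $(I-B^*/\lambda)^{-1}$ are well defined. The key observation is that for $\lambda\in\R$, we have $(I-B^*/\lambda)^{-1}=\bigl((I-B/\lambda)^{-1}\bigr)^*$, and therefore
\[
\scalar{M^*(I-B^*/\lambda)^{-1}H^*H(I-B/\lambda)^{-1}My,y} \;=\; \norm{H(I-B/\lambda)^{-1}My}^2 \;=:\; c \;\ge\; 0.
\]
So equation \eqref{eq:seim-1-shot:eq-eigen} becomes the real quadratic relation $(1+\tau\alpha)\lambda^2-\lambda+\tau c=0$, with $c\ge 0$, $\tau>0$, and $1+\tau\alpha\ge 1$.

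I would then split into the two cases $\lambda\ge 1$ and $\lambda\le -1$. For $\lambda > 1$, factor as $\lambda\bigl((1+\tau\alpha)\lambda-1\bigr)+\tau c$; both summands are strictly positive (the first because $(1+\tau\alpha)\lambda>\lambda>1$, the second because $\tau c\ge 0$), contradicting the equality to zero; the case $\lambda=1$ is excluded by hypothesis (and indeed already ruled out at the end of Proposition~\ref{prop:seim-1-shot:eigen-eq}). For $\lambda\le -1$, one has $(1+\tau\alpha)\lambda^2\ge 1$ and $-\lambda\ge 1$, hence $(1+\tau\alpha)\lambda^2-\lambda+\tau c\ge 2+\tau c>0$, again a contradiction.

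There is no genuine obstacle here: the whole argument is a one-line sign check on a quadratic once the positivity of the inner product is recognized. The only subtle point worth stating explicitly is the adjoint identity $(I-B^*/\lambda)^{-1}=\bigl((I-B/\lambda)^{-1}\bigr)^*$ for real $\lambda$, which is what turns the scalar product into a non-negative real number and makes the rest automatic.
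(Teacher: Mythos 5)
Your proposal is correct and follows essentially the same route as the paper: for real $\lambda$ the scalar product collapses to $\norm{H(I-B/\lambda)^{-1}My}^2\ge 0$, and the equation reduces to a sign check on $(1+\tau\alpha)\lambda^2-\lambda+\tau c$. The paper handles both cases $\lambda>1$ and $\lambda\le -1$ at once by observing that $(1+\tau\alpha)\lambda^2-\lambda\ge\lambda^2-\lambda=\lambda(\lambda-1)>0$ whenever $\lambda\in\R$, $\lambda\neq 1$, $|\lambda|\ge 1$, but your two-case split is an equivalent verification.
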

	\begin{proof}
		When $\lambda\in\R\backslash\{0\}$ equation \eqref{eq:seim-1-shot:eq-eigen} becomes $$(1+\tau\alpha)\lambda^2-\lambda+\tau\norm{H(I-B/\lambda)^{-1}My}^2=0.$$
		If $\lambda\in\R,\lambda\neq 1,|\lambda|\ge 1$ then $(1+\tau\alpha)\lambda^2-\lambda\ge \lambda^2-\lambda>0$, thus the left-hand side of the above equation is positive for any $\tau>0$.
	\end{proof}
	
	For the general case of complex eigenvalues, the study is much more complicated and technical. First, we study separately the very particular and simple case where $B=0$. 
	
	\begin{proposition}
		\label{prop:seim-1-shot:tau:B=0}
		When $B=0$, the eigenvalue equation \eqref{eq:seim-1-shot:eq-eigen} cannot hold for $\lambda\in\C, |\lambda|\ge 1$ if $\tau>0$ and
		$$
		(\norm{H}^2\norm{M}^2-\alpha)\tau<1.
		$$
	\end{proposition}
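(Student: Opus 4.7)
The plan is to exploit the drastic simplification that occurs when $B=0$: both resolvents $(I-B^*/\lambda)^{-1}$ and $(I-B/\lambda)^{-1}$ collapse to the identity, so the inner product in \eqref{eq:seim-1-shot:eq-eigen} becomes simply $\scalar{M^*H^*HMy,y}=\norm{HMy}^2$. Consequently the eigenvalue equation reduces to the scalar quadratic
\[
(1+\tau\alpha)\lambda^2-\lambda+\tau\norm{HMy}^2=0,
\]
and the task becomes showing that, under the stated bound on $\tau$, no root of this quadratic can satisfy $|\lambda|\ge 1$. I would set $c\coloneqq\tau\norm{HMy}^2$ and observe that, since $\norm{y}=1$, one has $0\le c\le \tau\norm{H}^2\norm{M}^2$, which by hypothesis is strictly less than $1+\tau\alpha$.

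Next I would discriminate two cases according to the sign of the discriminant $\Delta=1-4c(1+\tau\alpha)$. If $\Delta<0$, the two roots are complex conjugate and their product equals $c/(1+\tau\alpha)$, so $|\lambda|^2=c/(1+\tau\alpha)<1$ by the hypothesis; this case is done immediately. If $\Delta\ge 0$, both roots are real, and Vieta's formulas (positive sum $1/(1+\tau\alpha)$ and nonnegative product $c/(1+\tau\alpha)$) force both roots to be nonnegative; the larger one is
\[
\lambda_+=\frac{1+\sqrt{1-4c(1+\tau\alpha)}}{2(1+\tau\alpha)}\le\frac{1}{1+\tau\alpha}\le 1.
\]

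The only remaining obstacle, which I expect to be the one subtle point, is the boundary case where equality $\lambda_+=1$ could potentially occur: this requires simultaneously $\alpha=0$ and $c=0$, hence $HMy=0$ with $\norm{y}=1$. Here I would invoke the injectivity assumption on $H(I-B)^{-1}M$ in \eqref{hypo}, which for $B=0$ specializes to injectivity of $HM$, contradicting $y\neq 0$. (Alternatively, one can simply cite Proposition \ref{prop:seim-1-shot:eigen-eq}, which already excludes $\lambda=1$ as an eigenvalue.) Combining the three cases yields $|\lambda|<1$ for every root, which is exactly the claim.
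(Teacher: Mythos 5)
Your proof is correct. It reduces, exactly as the paper does, to the scalar quadratic $(1+\tau\alpha)\lambda^2-\lambda+\tau\norm{HMy}^2=0$, but from there you take the route the paper only mentions in passing (``one can prove the proposition by explicitly computing the roots'') and does not write out: you split on the sign of the discriminant, use Vieta's product formula to handle the complex-conjugate case, and bound the larger real root directly, invoking the injectivity of $HM$ only to rule out the degenerate boundary configuration $\alpha=0$, $HMy=0$ that would put a root at $\lambda=1$. The paper instead applies the Schur--Cohn-type criterion of Lemma~\ref{lem:marden-ord2}: with $a_0=\frac{\tau\norm{HMy}^2}{1+\tau\alpha}$ and $a_1=\frac{-1}{1+\tau\alpha}$ it checks $(a_0-a_1+1)(a_0+a_1+1)>0$ (this is where injectivity of $HM$ enters, to get $a_0+a_1+1>0$ when $\alpha=0$) and then reads off that the roots lie strictly inside the unit disk \emph{if and only if} $|a_0|<1$, i.e.\ $(\norm{HMy}^2-\alpha)\tau<1$. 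Your argument is more elementary and self-contained (no auxiliary lemma needed); the paper's buys an equivalence rather than a one-way implication at the level of the quadratic, which makes the sharpness of the condition visible. Both verifications are sound, and your treatment of the boundary case $\lambda_+=1$ is the right way to close the only potential gap in the explicit-roots approach.
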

	\begin{proof}
		When $B=0$, $HM$ is injective by \eqref{hypo}, and equation \eqref{eq:seim-1-shot:eq-eigen} becomes
		$(1+\tau\alpha)\lambda^2-\lambda+\tau\norm{HMy}^2=0$. One can prove the proposition by explicitly computing the roots of the second order polynomial with respect to $\lambda$. One can also apply the following elementary Lemma \ref{lem:marden-ord2} below, which can be deduced from Marden's work \cite{marden66} or Appendix C of \cite{bon22}. Indeed, the previous polynomial can be written as $P(\lambda)=a_0+a_1\lambda+\lambda^2$ with $a_0=\frac{\tau\norm{HMy}^2}{1+\tau\alpha}$ and $a_1=\frac{-1}{1+\tau\alpha}$. We have
			$a_0-a_1+1=\frac{\tau(\norm{HMy}^2+\alpha)+2}{1+\tau\alpha}>0$
			and $a_0+a_1+1=\frac{\tau(\norm{HMy}^2+\alpha)}{1+\tau\alpha}>0$ since $\norm{HMy}^2>0$ thanks to the injectivity of $HM$. Therefore, according to Lemma \ref{lem:marden-ord2}, the roots of $P$ stay strictly inside the unit circle of the complex plane if and only if $|a_0|=\frac{\tau\norm{HMy}^2}{1+\tau\alpha}<1$, which is equivalent to 
			$(\norm{HMy}^2-\alpha)\tau<1$. Since $\norm{y}=1$, the later inequality is verified if $
			(\norm{H}^2\norm{M}^2-\alpha)\tau<1
			$, which is the statement of the proposition.
	\end{proof}
	
	\begin{lemma}\label{lem:marden-ord2}
		Let $a_0,a_1\in\R$, then all roots of $\mathcal{P}(z)=a_0+a_1z+z^2$ stay strictly inside the unit circle of the complex plane if and only if
		$$
		|a_0|<1\quad\text{and}\quad
		(a_0-a_1+1)(a_0+a_1+1)>0.
		$$
	\end{lemma}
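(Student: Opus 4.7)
The idea is to translate the two algebraic conditions into geometric information about the two roots $z_1,z_2$ of $\mathcal{P}$, using two elementary identities: the Vieta relations $z_1 z_2 = a_0$, $z_1 + z_2 = -a_1$, and the factorization
\[
(a_0 - a_1 + 1)(a_0 + a_1 + 1) = \mathcal{P}(-1)\,\mathcal{P}(1) = (1-z_1)(1-z_2)(1+z_1)(1+z_2).
\]
I would then split the argument according to whether $z_1,z_2$ are both real or a complex-conjugate pair (these are the only possibilities, since $a_0,a_1\in\R$).

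\textbf{Forward direction.} Assume $|z_1|,|z_2|<1$. Then $|a_0|=|z_1||z_2|<1$ immediately. For the second condition, if $z_1,z_2\in\R$ then $z_i\in(-1,1)$ forces $1-z_i>0$ and $1+z_i>0$, so the four-factor product above is positive; if instead $z_2=\bar z_1$, then $\mathcal{P}(1)=|1-z_1|^2$ and $\mathcal{P}(-1)=|1+z_1|^2$ are each strictly positive because $|z_1|<1$ rules out $z_1=\pm 1$. In either sub-case $\mathcal{P}(1)\mathcal{P}(-1)>0$.

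\textbf{Backward direction.} Assume $|a_0|<1$ and $\mathcal{P}(1)\mathcal{P}(-1)>0$. The key elementary observation is that the two factors add up to $2(1+a_0)>0$, so two real numbers whose sum and product are both positive must themselves both be positive; that is, $\mathcal{P}(1)>0$ and $\mathcal{P}(-1)>0$. Now split by root type: if $z_2=\bar z_1$, then $a_0=|z_1|^2\ge 0$, and $|a_0|<1$ gives $|z_1|<1$ at once. If instead $z_1,z_2\in\R$, the inequalities $(1-z_1)(1-z_2)>0$ and $(1+z_1)(1+z_2)>0$ mean $z_1,z_2$ lie on the same side of $1$ and on the same side of $-1$, leaving only three possibilities: both in $(-1,1)$, both in $(1,+\infty)$, or both in $(-\infty,-1)$. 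The last two are ruled out by $|z_1 z_2|=|a_0|<1$, so both roots lie in $(-1,1)$.

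\textbf{Main obstacle.} There is no serious technical difficulty; the proof is essentially a bookkeeping exercise. The only point requiring care is the opening move in the backward direction, where one must notice that $|a_0|<1$ guarantees positivity of the sum of the two factors and thus upgrades ``positive product'' into ``both factors individually positive,'' which is what is actually needed to conclude $\mathcal{P}(\pm 1)>0$.
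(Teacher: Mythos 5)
Your proof is correct and complete. Note that the paper does not actually prove this lemma: it simply cites Marden's book and Appendix~C of \cite{bon22}, where the statement follows from general Schur--Cohn-type criteria for locating polynomial roots relative to the unit circle. Your argument is therefore a genuinely different, self-contained and more elementary route: you exploit the factorizations $a_0+a_1+1=\mathcal{P}(1)=(1-z_1)(1-z_2)$ and $a_0-a_1+1=\mathcal{P}(-1)=(1+z_1)(1+z_2)$ together with $z_1z_2=a_0$, and reduce everything to sign bookkeeping on the roots. The one step that deserves the emphasis you give it is the upgrade from $\mathcal{P}(1)\mathcal{P}(-1)>0$ to $\mathcal{P}(1)>0$ and $\mathcal{P}(-1)>0$ via the observation that the two factors sum to $2(1+a_0)>0$; without it the real-root case analysis would not close. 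The trade-off is the usual one: the citation route is shorter on the page and scales to higher-degree polynomials, while your argument is transparent, verifiable in place, and makes clear exactly where each of the two hypotheses is used (in particular, that $|a_0|<1$ alone already handles the complex-conjugate case, and is needed again in the real case to exclude both roots lying outside $[-1,1]$ on the same side).
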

	
	Now we consider the more complicated case where $0\neq \rho(B)<1$. The following proposition summarizes the results we obtained. 
	
	%
	%
	
	
	\begin{proposition}[Complex eigenvalues]
		\label{prop:seim-1-shot:tau:complex-lam}
		If $0\neq \rho(B)<1$, there exists $\tau>0$ sufficiently small such that equation \eqref{eq:seim-1-shot:eq-eigen} admits no solution $\lambda\in\C\backslash\R, |\lambda|\ge 1$. 
		In particular, if $\norm{B}<1$, given any $\delta_0>0$ and $0<\theta_0\le\frac{\pi}{4}$, one can choose
		\[
		\tau <\min_{1\le i\le 3}\left(
		\frac{\norm{H}^2\norm{M}^2}{(1-\norm{B})^4}\varphi_i(\norm{B})+C_i\alpha\right)^{-1},
		\]
		where the functions $\varphi_i$, $1\le i\le 3$, are defined by
		\begin{equation*}
			\varphi_1(b)\coloneqq4b^2,
			\quad\varphi_2(b)\coloneqq\cfrac{1}{2\sin\frac{\theta_0}{2}}(1+b)^2(1-b)^2
			\quad\text{and}\quad
			\varphi_3(b)\coloneqq\cfrac{2c}{\delta_0}b^2,\\
		\end{equation*}
		and the pure constants $C_i$, $1\le i\le 3$, are defined by
		\begin{equation*}
			C_1\coloneqq\sqrt{2}-1,\quad 
			C_2\coloneqq \sqrt{2}+\frac{1}{2\sin\frac{\theta_0}{2}}-1
			\quad\text{and}\quad
			C_3\coloneqq\frac{\sqrt{c}}{\delta_0}-1
		\end{equation*}
		with
		\begin{equation*}
			c\coloneqq\frac{1+2\delta_0\sin\frac{3\theta_0}{2}+\delta_0^2}{\cos^2\frac{3\theta_0}{2}}.
		\end{equation*}
	\end{proposition}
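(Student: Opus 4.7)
The plan is to argue by contradiction. Set
\[
F(\lambda):=\langle M^*(I-B^*/\lambda)^{-1}H^*H(I-B/\lambda)^{-1}My,y\rangle,
\]
so that equation~\eqref{eq:seim-1-shot:eq-eigen} reads $(1+\tau\alpha)\lambda^2-\lambda+\tau F(\lambda)=0$. Suppose a solution $\lambda\in\C\setminus\R$ with $|\lambda|\geq 1$ exists; by the conjugation symmetry $F(\bar\lambda)=\overline{F(\lambda)}$ of the equation I may assume $\mathrm{Im}\,\lambda>0$. Dividing by $\lambda\neq 0$ and taking moduli gives the basic identity
\[
|(1+\tau\alpha)\lambda-1|\,|\lambda|\;=\;\tau\,|F(\lambda)|.
\]
The strategy is to combine a location-dependent upper bound on $|F(\lambda)|$, obtained via the quantity $s(B)$ defined before the statement and analyzed in Appendix~\ref{app:lems}, with a location-dependent lower bound on $|(1+\tau\alpha)\lambda-1|$, so as to produce a lower bound on $\tau$ that contradicts the threshold of the statement.

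Since no single pair of such bounds is uniformly sharp over the admissible set $\{\lambda\in\C\setminus\R:\,|\lambda|\geq 1\}$, I would cover it by three overlapping regions, each matched to one of the three terms $\varphi_i,C_i$. In the generic region away from both the positive real axis and the point $\lambda=1$, one iteration of the resolvent identity $(I-B/\lambda)^{-1}=I+(1/\lambda)B(I-B/\lambda)^{-1}$ applied on each factor inside $F(\lambda)$ extracts a factor $\|B\|^2$, and, combined with the crude lower bound $|(1+\tau\alpha)\lambda-1|\geq\tau\alpha$, yields the $\varphi_1,C_1$ bound with denominator $(1-\|B\|)^4$. In the narrow sector of half-opening $\theta_0$ around the positive real axis, the previous lower bound is too weak, but the angular estimate $|(1+\tau\alpha)\lambda-1|\geq(1+\tau\alpha)|\lambda||\sin\theta|$ combined with the half-angle identity $\sin\theta_0=2\sin(\theta_0/2)\cos(\theta_0/2)$ and the coarser $|F(\lambda)|\leq\|H\|^2\|M\|^2/(1-\|B\|)^2$ produces $\varphi_2,C_2$. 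Finally in the disk of radius $\delta_0$ around $\lambda=1$, where both previous arguments degenerate, I would expand the equation locally around $\lambda=1$, using $F(1)=\|H(I-B)^{-1}My\|^2>0$ from the injectivity assumption~\eqref{hypo} and the trigonometric constant $c$ defined in the statement, to produce $\varphi_3,C_3$.

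The main obstacle is obtaining the sharp exponent $(1-\|B\|)^{-4}$ together with the factor $\|B\|^2$ in $\varphi_1,\varphi_3$: the naive $\|(I-B/\lambda)^{-1}\|\leq 1/(1-\|B\|)$ only yields $(1-\|B\|)^{-2}$ with no $\|B\|$ improvement. The gain comes from iterating the resolvent identity on both factors of $F(\lambda)$ before estimating, then applying Cauchy--Schwarz together with the Appendix~\ref{app:lems} control of $s(B)$, while simultaneously tracking the explicit constants $\sqrt{2}-1$ and the $\sin(\theta_0/2),\delta_0$ factors in $C_i$. A secondary, bookkeeping obstacle is arranging the three regions to jointly cover the admissible set while individually producing precisely the constants $C_1,C_2,C_3$ of the statement; the free parameters $\theta_0\in(0,\pi/4]$ and $\delta_0>0$ are the tuning handles for this overlap.
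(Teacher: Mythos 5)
Your proposal has the right global architecture (a three-region cover of $\{|\lambda|\ge 1\}\setminus\R$ tuned by $\theta_0$ and $\delta_0$, with the injectivity of $H(I-B)^{-1}M$ entering near $\lambda=1$), but the core analytic step does not work. Taking moduli of the eigenvalue equation discards exactly the information the proof needs. From $|(1+\tau\alpha)\lambda-1|\,|\lambda|=\tau|F(\lambda)|$ and the lower bound $|(1+\tau\alpha)\lambda-1|\ge\tau\alpha$ you obtain $\alpha\le|F(\lambda)|/|\lambda|$, a condition in which $\tau$ has cancelled: it cannot be violated by shrinking $\tau$, and it is vacuous when $\alpha=0$. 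So your generic region produces no bound on $\tau$ at all. A second, independent problem is the claim that iterating the resolvent identity on both factors of $F(\lambda)$ ``extracts a factor $\|B\|^2$'': the zeroth-order term $\|HMy\|^2=\langle M^*H^*HMy,y\rangle$ survives that expansion and carries no power of $\|B\|$, so $|F(\lambda)|$ admits no upper bound proportional to $\|B\|^2$.

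The paper's mechanism is genuinely different and is what makes the constants come out. One decomposes $(I-B/\lambda)^{-1}=P(\lambda)+\ic Q(\lambda)$ (Lemma~\ref{lem:decomPQ}), where the imaginary part satisfies the crucial bound $\norm{Q(\lambda)}\le|\sin\theta|\,\norm{B}/(1-\norm{B})^2$ --- this is the sole source of both the factor $\norm{B}^2$ and the denominator $(1-\norm{B})^4$ in $\varphi_1,\varphi_3$. One then splits the eigenvalue equation into real and imaginary parts and forms the combination $\Re+\gamma\Im$ with $\gamma$ chosen per case ($\gamma_1=\pm1$ when $\Re(\lambda^2-\lambda)\ge0$, $\gamma_2=\mp1$ in the angular sector, $\gamma_3$ built from $\delta_0,\theta_0$ near $\theta=0$; the fourth case is empty by Lemma~\ref{lem:gamma123}~(iv)). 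In that combination the term $G(P^*+\gamma Q^*,P+\gamma Q)\ge0$ --- which contains the non-small zeroth-order contribution --- is discarded \emph{by sign}, not bounded; only $G(Q^*,Q)$ must be estimated, and its $|\sin\theta|^2$ factor is absorbed by the lower bound $\Re(\lambda^2-\lambda)+\gamma\Im(\lambda^2-\lambda)\ge2\delta|\sin(\theta/2)|$. The whole combination is then shown to be strictly one-signed for $\tau$ below the stated threshold, contradicting the eigenvalue equation. Without some substitute for this sign-based argument, your modulus-based plan cannot be completed.
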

	
	\begin{proof} 
		
		\textbf{Step 1. Rewrite equation \eqref{eq:seim-1-shot:eq-eigen} by separating real and imaginary parts.}
		
		Let $\lambda=R(\cos\theta+\ic\sin\theta)$ in polar form where $R=|\lambda| \ge 1$ and $\theta\in(-\pi,\pi)$, $\theta\neq 0$. Write ${1}/{\lambda}=r(\cos\phi+\ic\sin\phi)$ in polar form where $r={1}/{|\lambda|}={1}/{R}\le 1$ and $\phi=-\theta\in(-\pi,\pi)$. By Lemma \ref{lem:decomPQ}, we have
		$$\left(I-\frac{B}{\lambda}\right)^{-1}=P(\lambda)+\ic Q(\lambda),\quad \left(I-\frac{B^*}{\lambda}\right)^{-1}=P(\lambda)^*+\ic Q(\lambda)^*$$
		where $P(\lambda)$ and $Q(\lambda)$ are $\C^{n_u\times n_u}$ matrices that satisfy the following bounds in the case $\norm{B}<1$ for all $|\lambda|\ge 1$: 
		\begin{gather}
			\norm{P(\lambda)}\le p \coloneqq 
			\cfrac{1}{1-\norm{B}}, \label{eq:pB} \\ 
			\norm{Q(\lambda)}\le q_1\coloneqq
			\cfrac{\norm{B}}{1-\norm{B}}
			\quad\mbox{and}\quad
			\norm{Q(\lambda)}\le |\sin\theta|q_2 \quad\mbox{with}\quad q_2\coloneqq
			\cfrac{\norm{B}}{(1-\norm{B})^2}. \label{eq:qB}
		\end{gather}
		These bounds still hold in the case $0\neq\rho(B)<1$ with  
		\begin{equation}
			\label{eq:pqB:rho(B)<1}
			p=(1+\norm{B})s(B)^2, \quad q_1=\norm{B}s(B)^2 \quad\mbox{and}\quad q_2=\norm{B}s(B)^2.
		\end{equation}


		\noindent To simplify the notation, we will not explicitly write the dependence of $P$ and $Q$ on $\lambda$. Now we rewrite \eqref{eq:seim-1-shot:eq-eigen} as
		\begin{equation}
			\label{eq:ready-split:k=1}
			(1+\tau\alpha)(\lambda^2-\lambda)+\tau\alpha(\lambda-1)+\tau\alpha+\tau G(P^*+\ic Q^*,P+\ic Q)=0
		\end{equation}
		where 
		$$G(X,Y)\coloneqq\scalar{M^*XH^*HYMy,y}\in\C, \quad X,Y\in\C^{n_u\times n_u}.$$
		Notice that $G$ is a bilinear form and $G(X,Y)=G(Y^*,X^*)^*$ so that $G(X,Y)+G(X^*,Y^*)$ is real.
		With these properties of $G$, we expand \eqref{eq:ready-split:k=1} and take its real and imaginary parts, which yields
		\begin{equation}\label{eq:realpart:k=1}
			(1+\tau\alpha)\Re(\lambda^2-\lambda)+\tau\alpha\Re(\lambda-1)+\tau\alpha+\tau [G(P^*,P)- G(Q^*,Q)]=0,
		\end{equation}
		and
		\begin{equation}\label{eq:impart:k=1}
			(1+\tau\alpha)\Im(\lambda^2-\lambda)+\tau\alpha\Im(\lambda-1)+\tau [G(P^*,Q)+ G(Q^*,P)]=0.
		\end{equation}
		
		\noindent\textbf{Step 2. Use a suitable combination of equations \eqref{eq:realpart:k=1} and \eqref{eq:impart:k=1}.}
		
	
	Let $\gamma\in\R$.
	Multiplying equation \eqref{eq:impart:k=1} with $\gamma$ then summing it with equation \eqref{eq:realpart:k=1}, we obtain:
	\begin{multline*}
		(1+\tau\alpha)[\Re(\lambda^2-\lambda)+\gamma\Im(\lambda^2-\lambda)]+\tau\alpha[\Re(\lambda-1)+\gamma\Im(\lambda-1)]\\
		+\tau\alpha+\tau[G(P^*,P)-G(Q^*,Q)+\gamma G(P^*,Q)+\gamma G(Q^*,P)]=0,
	\end{multline*}
	or equivalently,
	\begin{multline}\label{eq:im+gamma*re:k=1}
		(1+\tau\alpha)[\Re(\lambda^2-\lambda)+\gamma\Im(\lambda^2-\lambda)]+\tau\alpha[\Re(\lambda-1)+\gamma\Im(\lambda-1)]\\
		+\tau\alpha+\tau G(P^*+\gamma Q^*,P+\gamma Q)-(1+\gamma^2)\tau G(Q^*,Q)=0.
	\end{multline}
	
	\begin{figure}[htb]
		\centering
		\includegraphics[width=0.35\textwidth]{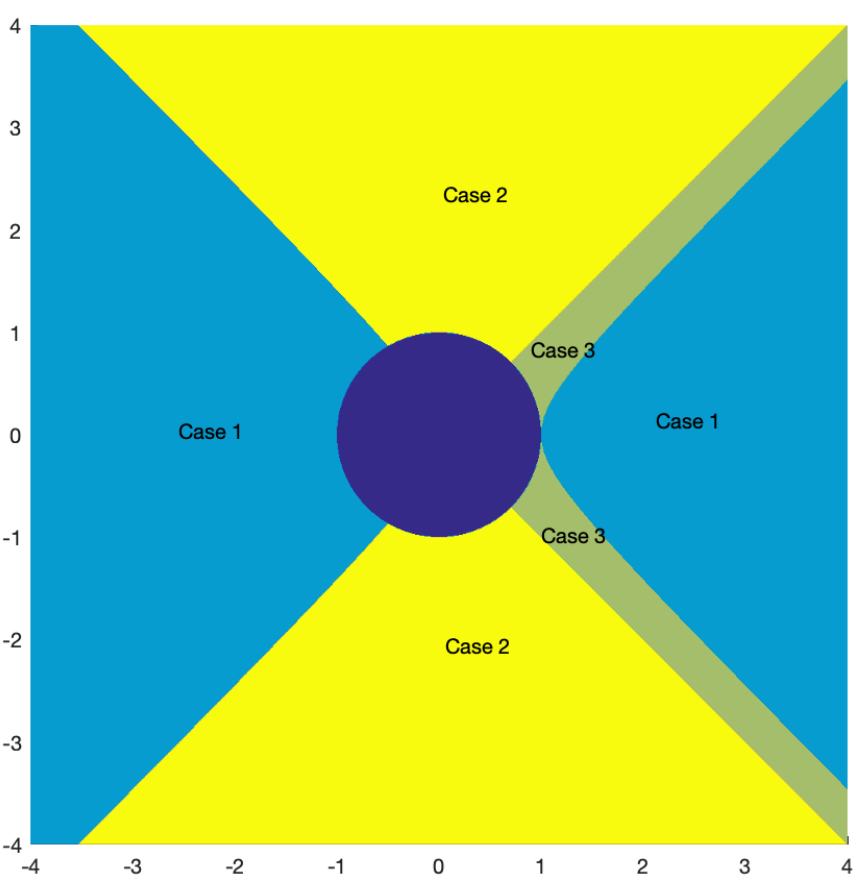}
		\caption{Illustration of the regions in the complex plane associated with the cases 1, 2 and 3 indicated in the proof of Proposition \ref{prop:seim-1-shot:tau:complex-lam} for $\theta_0=\frac{\pi}{4}$. The center circle is the unit circle.}
		\label{fig:regions}
	\end{figure}
	Now we consider four cases for $\lambda$ as in Lemma~\ref{lem:gamma123} (see Figure \ref{fig:regions}):
	\begin{itemize}
		\item\textit{Case 1.} $\Re(\lambda^2-\lambda)\ge 0$;
		\item\textit{Case 2.} $\Re(\lambda^2-\lambda)<0$ and $\theta\in [\theta_0,\pi-\theta_0]\cup[-\pi+\theta_0,-\theta_0]$ for fixed $0<\theta_0\le\frac{\pi}{4}$;
		\item\textit{Case 3.} $\Re(\lambda^2-\lambda)<0$ and $\theta \in (-\theta_0,\theta_0)$ for fixed $0<\theta_0\le \frac{\pi}{4}$;
		\item\textit{Case 4.} $\Re(\lambda^2-\lambda)<0$ and $\theta \in (\pi-\theta_0,\pi)\cup(-\pi,-\pi+\theta_0)$ for fixed $0<\theta_0\le \frac{\pi}{4}$. 
	\end{itemize}
	Cases 1, 2 and 3 are respectively treated in Lemmas \ref{lem:seim-1-shot:case1}, \ref{lem:seim-1-shot:case2} and \ref{lem:seim-1-shot:case3} below.
	Notice that case 4 corresponds to an empty set, according to Lemma \ref{lem:gamma123} (iv). The statement of the proposition easily follows from the combination of Lemmas \ref{lem:seim-1-shot:case1}, \ref{lem:seim-1-shot:case2} and \ref{lem:seim-1-shot:case3}.
	In particular, the parameter $\theta_0$ is mainly used for case 2 to obtain a lower bound for $|\lambda-1|$. This is why we require $\theta_0>0$. We need $\theta_0\le\frac{\pi}{4}$ to design a lower bound for a suitable combination of $\Re(\lambda^2-\lambda)$ and $\Im(\lambda^2-\lambda)$ in case 3.
\end{proof}

\noindent In the lemmas below we shall make use of the following obvious property where $\norm{y}=1$:
\begin{equation}
	\label{ppty:G}
	0\le G(X^*,X)=\norm{HXMy}^2\le (\norm{H}\norm{M}\norm{X})^2,\quad\forall X\in\C^{n_u\times n_u}.
\end{equation}

\begin{lemma}[Case 1]
	\label{lem:seim-1-shot:case1} Let $0\neq\rho(B)<1$ and let $|\lambda|\ge 1$ with $\Re(\lambda^2-\lambda)\ge 0$. Then equation \eqref{eq:seim-1-shot:eq-eigen} cannot hold if one chooses
	$$\tau<\left(\norm{H}^2\norm{M}^2s(B)^4 \cdot4\norm{B}^2+(\sqrt{2}-1)\alpha\right)^{-1}.$$
	Moreover, if $\norm{B}<1$, the result is also true if 
	$$\tau<\left(\frac{\norm{H}^2\norm{M}^2}{(1-\norm{B})^4}\cdot4\norm{B}^2+(\sqrt{2}-1)\alpha\right)^{-1}.$$
\end{lemma}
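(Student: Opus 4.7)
The plan is to work with equation \eqref{eq:im+gamma*re:k=1} in the simplest instance $\gamma = 0$, which coincides with the real-part equation \eqref{eq:realpart:k=1}. Under the Case 1 hypothesis $\Re(\lambda^2-\lambda)\geq 0$ (together with $|\lambda|\geq 1$), the term $(1+\tau\alpha)\Re(\lambda^2-\lambda)$ is non-negative, and $\tau G(P^*,P)=\tau\norm{HPMy}^2\geq 0$ by \eqref{ppty:G}. Using $\tau\alpha[\Re(\lambda-1)+1]=\tau\alpha\Re\lambda$, ruling out the eigenvalue equation reduces to showing
$$(1+\tau\alpha)\Re(\lambda^2-\lambda)+\tau\alpha\Re\lambda+\tau\norm{HPMy}^2 > \tau G(Q^*,Q)$$
for every $\lambda$ in the Case 1 region and every unit vector $y$, as soon as $\tau$ satisfies the stated bound.

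\textbf{Key estimates.} On the right-hand side I apply the refined bound $\norm{Q(\lambda)}\leq |\sin\theta|q_2$ from \eqref{eq:qB} (with $q_2=\norm{B}s(B)^2$ in the general case, see \eqref{eq:pqB:rho(B)<1}), which yields $\tau G(Q^*,Q)\leq \tau\norm{H}^2\norm{M}^2 q_2^2\sin^2\theta$. The extra factor $\sin^2\theta$ will be traded against the Case 1 geometry: writing $\lambda=a+ib$, the assumptions $|\lambda|\geq 1$ and $a^2-a-b^2\geq 0$ force $2a^2-a\geq 1$, hence $a\geq 1$ or $a\leq -1/2$, as well as $b^2\leq a(a-1)$. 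Since $R^2\sin^2\theta=b^2$, the right-hand side is then controlled by a multiple of $a(a-1)/R^2$. On the left-hand side, using the identity $(1+\tau\alpha)\Re(\lambda^2-\lambda)+\tau\alpha\Re\lambda=(1+\tau\alpha)(a^2-b^2)-a$, I will produce a matching lower bound that exceeds this multiple provided $\tau$ is small enough.

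\textbf{Extracting constants and main obstacle.} The coefficient $4\norm{B}^2$ in $\varphi_1(\norm{B})$ is expected to come out after squaring the $q_2$-bound and absorbing the geometric factor $R^2\sin^2\theta\leq a(a-1)$. The constant $C_1=\sqrt{2}-1$ should emerge from a quadratic balancing of the regularization contribution, corresponding to a condition of the form $(1+\tau\alpha)^2\leq 2$. The main technical obstacle is handling the two geometric sub-regions $a\geq 1$ and $a\leq -1/2$ uniformly: in the first, $\Re\lambda\geq 1$ already contributes positively, while in the second $\Re\lambda$ is negative and $\sin^2\theta$ may approach $3/4$, so the positivity must come entirely from the term $(1+\tau\alpha)\Re(\lambda^2-\lambda)$. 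Once the general case is established, the tighter statement for $\norm{B}<1$ follows immediately by substituting $q_2=\norm{B}/(1-\norm{B})^2$ from \eqref{eq:qB} in place of $q_2=\norm{B}s(B)^2$.
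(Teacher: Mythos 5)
Your plan hinges on taking $\gamma=0$ in \eqref{eq:im+gamma*re:k=1}, i.e.\ working only with the real-part equation \eqref{eq:realpart:k=1}, and this is where the argument breaks down. To rule out the eigenvalue equation you need
$(1+\tau\alpha)\Re(\lambda^2-\lambda)+\tau\alpha\Re\lambda+\tau G(P^*,P)>\tau G(Q^*,Q)$
uniformly over the Case~1 region, with the bound on $\tau$ depending only on $\norm{H},\norm{M},\norm{B},\alpha$. But Case~1 contains points where $\Re(\lambda^2-\lambda)=0$ while $\sin^2\theta$ is bounded away from zero: take $\lambda=-\tfrac12+\ic\tfrac{\sqrt3}{2}$, for which $\lambda^2-\lambda=-\ic\sqrt3$, $|\lambda|=1$, $\Re\lambda=-\tfrac12$ and $\sin^2\theta=\tfrac34$. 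There the required inequality degenerates to $\norm{HPMy}^2-\tfrac{\alpha}{2}>G(Q^*,Q)$, in which $\tau$ has cancelled out entirely; no smallness condition on $\tau$ can enforce it, and there is no usable lower bound on $\norm{HPMy}^2$ in terms of the norms appearing in the statement (injectivity of $H(I-B)^{-1}M$ only helps at $\lambda=1$). Equivalently, the ratio $G(Q^*,Q)/\Re(\lambda^2-\lambda)$, which your strategy implicitly needs to bound, is unbounded on the Case~1 region. Your geometric observation $b^2\le a(a-1)$ does not rescue this, because it controls $\sin^2\theta$ by $a(a-1)/R^2$, which is again exactly of the size of $\Re(\lambda^2-\lambda)+b^2$, not of $\Re(\lambda^2-\lambda)$ itself.

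The missing idea is the one the paper builds Case~1 around: choose $\gamma_1=\mathrm{sign}(\Im(\lambda^2-\lambda))\in\{\pm1\}$ so that the combined quantity satisfies $\Re(\lambda^2-\lambda)+\gamma_1\Im(\lambda^2-\lambda)\ge|\lambda(\lambda-1)|\ge 2|\sin(\theta/2)|$ (Lemma~\ref{lem:gamma123}(i)). Dividing \eqref{eq:im+gamma*re:k=1} by this positive quantity, the dangerous term obeys $G(Q^*,Q)/(\Re(\lambda^2-\lambda)+\gamma_1\Im(\lambda^2-\lambda))\le(\norm{H}\norm{M}|\sin\theta|q_2)^2/(2|\sin(\theta/2)|)=2|\sin(\theta/2)|\cos^2(\theta/2)\norm{H}^2\norm{M}^2q_2^2\le2\norm{H}^2\norm{M}^2q_2^2$, which together with $\gamma_1^2=1$ (producing the factor $2$ in $-2\tau G(Q^*,Q)$) yields the coefficient $4\norm{B}^2s(B)^4$. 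The constant $C_1=\sqrt2-1$ comes from $|\Re(\lambda-1)+\gamma_1\Im(\lambda-1)|/(\Re(\lambda^2-\lambda)+\gamma_1\Im(\lambda^2-\lambda))\le\sqrt{1+\gamma_1^2}/|\lambda|\le\sqrt2$, not from a condition $(1+\tau\alpha)^2\le2$ as you conjecture. As written, your proposal cannot be completed.
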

\begin{proof}
	Define $$\gamma_1=\gamma_1(\lambda)\coloneqq\left\{\begin{array}{cc}
		1 &\text{if } \Im (\lambda^2-\lambda)\ge 0,\\
		-1 &\text{if } \Im (\lambda^2-\lambda)<0
	\end{array}\right.$$
	as in Lemma \ref{lem:gamma123}. Writing \eqref{eq:im+gamma*re:k=1} for $\gamma=\gamma_1$ and using $\gamma_1^2=1$ we obtain
	\begin{multline}\label{eq:seim-1-shot:case1}
		(1+\tau\alpha)[\Re(\lambda^2-\lambda)+\gamma_1\Im(\lambda^2-\lambda)]+\tau\alpha[\Re(\lambda-1)+\gamma_1\Im(\lambda-1)]\\
		+\tau\alpha+\tau G(P^*+\gamma_1 Q^*,P+\gamma_1 Q)-2\tau G(Q^*,Q)=0.
	\end{multline}
	Since $G(P^*+\gamma_1 Q^*,P+\gamma_1 Q)\ge 0$ by \eqref{ppty:G} and $\tau\alpha\ge 0$, then the left-hand side of \eqref{eq:seim-1-shot:case1} is positive if $\tau$ satisfies
	$$(1+\tau\alpha)[\Re(\lambda^2-\lambda)+\gamma_1\Im(\lambda^2-\lambda)]-\tau\alpha|\Re(\lambda-1)+\gamma_1\Im(\lambda-1)|
	-2\tau G(Q^*,Q)>0,$$
	or equivalently, 
	\begin{equation}
		\label{ineq:seim-1-shot:case1}
		1+\tau\alpha-\tau\alpha\frac{|\Re(\lambda-1)+\gamma_1\Im(\lambda-1)|}{\Re(\lambda^2-\lambda)+\gamma_1\Im(\lambda^2-\lambda)}
		-2\tau \frac{G(Q^*,Q)}{\Re(\lambda^2-\lambda)+\gamma_1\Im(\lambda^2-\lambda)}>0.
	\end{equation}
	Notice that the choice of $\gamma_1$ ensures $\Re(\lambda^2-\lambda)+\gamma_1\Im(\lambda^2-\lambda)>0$. By Lemma \ref{lem:gamma123}  we have
	$$\frac{|\Re(\lambda-1)+\gamma_1\Im(\lambda-1)|}{\Re(\lambda^2-\lambda)+\gamma_1\Im(\lambda^2-\lambda)}
	\le\frac{\sqrt{1+\gamma_1^2}|\lambda-1|}{|\lambda(\lambda-1)|}
	=\frac{\sqrt{2}}{|\lambda|}\le\sqrt{2}.$$
	Using again Lemma \ref{lem:gamma123}  and  \eqref{ppty:G}, we have
	\begin{multline*}
		\frac{G(Q^*,Q)}{\Re(\lambda^2-\lambda)+\gamma_1\Im(\lambda^2-\lambda)}
		\le\frac{(\norm{H}\norm{M}|\sin\theta|q_2)^2}{2|\sin(\theta/2)|}
		=2\left|\sin\frac{\theta}{2}\right|\cos^2\frac{\theta}{2}\norm{H}^2\norm{M}^2q_2^2\\
		\le2\norm{H}^2\norm{M}^2q_2^2.
	\end{multline*}
	Inserting the two previous inequalities in \eqref{ineq:seim-1-shot:case1} gives the desired results using definitions \eqref{eq:qB} and \eqref{eq:pqB:rho(B)<1} of $q_2$.
\end{proof}


\begin{lemma}[Case 2]
	\label{lem:seim-1-shot:case2} Let $0\neq\rho(B)<1$ and let $|\lambda|\ge1$, $\Re(\lambda^2-\lambda)<0$, $\theta\in [\theta_0,\pi-\theta_0]\cup[-\pi+\theta_0,-\theta_0]$ for given $0<\theta_0\le\frac{\pi}{4}$. Then equation \eqref{eq:seim-1-shot:eq-eigen} cannot hold if one chooses
	$$\tau<\left(\norm{H}^2\norm{M}^2s(B)^4\cdot\frac{(1+2\norm{B})^2}{2\sin\frac{\theta_0}{2}}+\left(\sqrt{2}+\frac{1}{2\sin\frac{\theta_0}{2}}-1\right)\alpha\right)^{-1}.$$
	Moreover, if $\norm{B}<1$, the result is also true if
	$$\tau<\left(\frac{\norm{H}^2\norm{M}^2}{(1-\norm{B})^2}\cdot\frac{(1+\norm{B})^2}{2\sin\frac{\theta_0}{2}}+\left(\sqrt{2}+\frac{1}{2\sin\frac{\theta_0}{2}}-1\right)\alpha\right)^{-1}.$$
\end{lemma}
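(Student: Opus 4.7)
The plan is to mirror the structure of the proof of Lemma \ref{lem:seim-1-shot:case1}, but with a different choice of $\gamma$ adapted to the angular region of Case 2. Because $\Re(\lambda^2-\lambda)<0$ here, this real part can no longer serve as a positive slack term; instead I invoke Lemma \ref{lem:gamma123}(ii) to obtain a $\gamma=\gamma_2$ (with $|\gamma_2|=1$, sign given by that of $\Im(\lambda^2-\lambda)$) for which $\Re(\lambda^2-\lambda)+\gamma_2\Im(\lambda^2-\lambda)>0$ admits a lower bound of order $|\lambda-1|$ rather than $|\lambda(\lambda-1)|$. The geometric input specific to Case 2 is the elementary bound $|\lambda-1|\ge 2\sin(\theta_0/2)$, valid whenever $|\lambda|\ge 1$ and $\theta\in[\theta_0,\pi-\theta_0]\cup[-\pi+\theta_0,-\theta_0]$ (noting that for $|\theta|\ge\pi/2$ one even has $|\lambda-1|\ge\sqrt{2}$, and $\theta_0\le\pi/4$ ensures the announced bound suffices throughout). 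This is the source of the factor $1/(2\sin(\theta_0/2))$ in the statement.

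Writing \eqref{eq:im+gamma*re:k=1} for $\gamma=\gamma_2$, I derive a sufficient condition for its left-hand side to be strictly positive, thereby contradicting the equation. The potentially negative contributions are controlled by: $|\Re(\lambda-1)+\gamma_2\Im(\lambda-1)|\le\sqrt{2}\,|\lambda-1|$ (thanks to $|\gamma_2|=1$), and the combined $G$-quantity $\tau G(P^*+\gamma_2 Q^*,P+\gamma_2 Q)-2\tau G(Q^*,Q)$, which I treat jointly (rather than dropping the first term as in Case 1) by invoking \eqref{ppty:G} with $\|P+\gamma_2 Q\|\le p+q_1$ and $\|Q\|\le q_1$ from \eqref{eq:pB}, \eqref{eq:qB}, \eqref{eq:pqB:rho(B)<1}, yielding the factor $\|H\|^2\|M\|^2(p+q_1)^2$. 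Dividing by the positive denominator $\Re(\lambda^2-\lambda)+\gamma_2\Im(\lambda^2-\lambda)$, using its Lemma \ref{lem:gamma123}(ii) lower bound together with $|\lambda-1|\ge 2\sin(\theta_0/2)$, reduces matters to a scalar inequality in $\tau$ whose coefficients match exactly the announced bound. Under \eqref{eq:pqB:rho(B)<1} the $(p+q_1)^2$ factor becomes $(1+2\|B\|)^2 s(B)^4$; the sharper $\|B\|<1$ statement uses \eqref{eq:pB}, \eqref{eq:qB} directly and produces $(1+\|B\|)^2/(1-\|B\|)^2$.

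The main new technical difficulty relative to Case 1 is precisely the joint handling of the two $G$-terms. Because the denominator $\Re(\lambda^2-\lambda)+\gamma_2\Im(\lambda^2-\lambda)$ is now only of order $|\lambda-1|$ instead of $|\lambda(\lambda-1)|$, the sharp Case 1 bound $\|Q\|\le|\sin\theta|q_2$ no longer achieves the correct scaling in $\|B\|$: one has to retain both $G$-terms and pair the cruder estimates $\|P+\gamma_2 Q\|\le p+q_1$ and $\|Q\|\le q_1$, which is what produces the $(p+q_1)^2$ factor. The extra $1/(2\sin(\theta_0/2))$ appearing in the $\alpha$-coefficient (compared to $\sqrt{2}-1$ in Case 1) reflects the same phenomenon, arising when the standalone $\tau\alpha$ term is divided by the smaller denominator of order $|\lambda-1|\ge 2\sin(\theta_0/2)$.
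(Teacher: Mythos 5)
Your quantitative skeleton matches the paper's proof exactly: the choice $\gamma=\gamma_2$ from Lemma \ref{lem:gamma123}(ii), the uniform lower bound $2\sin(\theta_0/2)$ coming from $|\lambda-1|\ge 2|\sin(\theta/2)|$ on this angular sector, the $\sqrt{2}$ bound for the linear term, the $(p+q_1)^2$ factor (giving $(1+2\norm{B})^2s(B)^4$, resp.\ $(1+\norm{B})^2/(1-\norm{B})^2$), and the extra $1/(2\sin\frac{\theta_0}{2})$ in the $\alpha$-coefficient from the standalone $\tau\alpha$ term. However, there is a genuine logical error in the direction of the argument. With the $\gamma_2$ of Lemma \ref{lem:gamma123}(ii) (whose sign is \emph{opposite} to that of $\Im(\lambda^2-\lambda)$, not equal to it as you write), the combination $\Re(\lambda^2-\lambda)+\gamma_2\Im(\lambda^2-\lambda)$ is \emph{negative}, with $|\Re(\lambda^2-\lambda)+\gamma_2\Im(\lambda^2-\lambda)|\ge|\lambda(\lambda-1)|\ge 2\sin(\theta_0/2)$; the contradiction is obtained by showing the left-hand side of \eqref{eq:im+gamma*re:k=1} is strictly \emph{negative}. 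Your plan --- make the combination positive and show the left-hand side positive, as in Case 1 --- cannot be executed: when $\Re(\lambda^2-\lambda)<0$, no choice of $\gamma$ with $|\gamma|=1$ makes $\Re(\lambda^2-\lambda)+\gamma\Im(\lambda^2-\lambda)$ positive with a uniform lower bound (it vanishes when $\lambda^2-\lambda$ lies on the lines at $\pm 3\pi/4$). This sign flip is precisely the structural novelty of Case 2 relative to Case 1.

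The same confusion contaminates your treatment of the $G$-terms. Since the target is negativity, the term $-2\tau G(Q^*,Q)\le 0$ has the favorable sign and is simply dropped, while $+\tau G(P^*+\gamma_2Q^*,P+\gamma_2Q)\ge 0$ is the harmful one and is bounded by $\norm{H}^2\norm{M}^2(p+q_1)^2$. Treating the two ``jointly'' in absolute value, as you propose, would give $(p+q_1)^2+2q_1^2$ rather than $(p+q_1)^2$ and would not reproduce the constant in the statement. Once the sign of $\gamma_2$ and the direction of the final inequality are corrected, the rest of your estimates go through and the proof coincides with the paper's.
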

\begin{proof}
	Define 
	$$\gamma_2=\gamma_2(\lambda)\coloneqq\left\{\begin{array}{cc}
		-1 &\text{if } \Im (\lambda^2-\lambda)\ge 0,\\
		1 &\text{if } \Im (\lambda^2-\lambda)<0
	\end{array}\right.$$ 
	as in Lemma \ref{lem:gamma123} (ii). Writing \eqref{eq:im+gamma*re:k=1} for $\gamma=\gamma_2$ and using $\gamma_2^2=1$ we obtain
	\begin{multline}\label{eq:seim-1-shot:case2}
		(1+\tau\alpha)[\Re(\lambda^2-\lambda)+\gamma_2\Im(\lambda^2-\lambda)]+\tau\alpha[\Re(\lambda-1)+\gamma_2\Im(\lambda-1)]
		\\
		+\tau\alpha+\tau G(P^*+\gamma_2 Q^*,P+\gamma_2 Q)-2\tau G(Q^*,Q)=0.
	\end{multline}
	Since $G(Q^*,Q)\ge 0$ by \eqref{ppty:G}, the left-hand side of \eqref{eq:seim-1-shot:case2} is negative if $\tau$ satisfies
	\begin{multline*}
		(1+\tau\alpha)[\Re(\lambda^2-\lambda)+\gamma_2\Im(\lambda^2-\lambda)]+\tau\alpha|\Re(\lambda-1)+\gamma_2\Im(\lambda-1)|
		\\
		+\tau\alpha+\tau G(P^*+\gamma_2 Q^*,P+\gamma_2 Q)<0,
	\end{multline*}
	or equivalently, 
	\begin{multline}\label{ineq:seim-1-shot:case2}
		-1-\tau\alpha
		+\tau\alpha\frac{|\Re(\lambda-1)+\gamma_2\Im(\lambda-1)|}{|\Re(\lambda^2-\lambda)+\gamma_2\Im(\lambda^2-\lambda)|}
		+\frac{\tau\alpha}{|\Re(\lambda^2-\lambda)+\gamma_2\Im(\lambda^2-\lambda)|}
		\\
		+\tau\frac{G(P^*+\gamma_2 Q^*,P+\gamma_2 Q)}{|\Re(\lambda^2-\lambda)+\gamma_2\Im(\lambda^2-\lambda)|}<0.
	\end{multline}
	Notice that the choice of $\gamma_2$ ensures $\Re(\lambda^2-\lambda)+\gamma_2\Im(\lambda^2-\lambda)<0$. 
	In the following we derive upper bounds independent of $\lambda$ for the terms appearing with the negative sign in \eqref{ineq:seim-1-shot:case2}.
	By Lemma \ref{lem:gamma123} (ii) we have
	$$\frac{|\Re(\lambda-1)+\gamma_2\Im(\lambda-1)|}{|\Re(\lambda^2-\lambda)+\gamma_2\Im(\lambda^2-\lambda)|}\le\frac{\sqrt{1+\gamma_2^2}|\lambda-1|}{|\lambda(\lambda-1)|}=\frac{\sqrt{2}}{|\lambda|}\le\sqrt{2}$$
	and
	$$\frac{1}{|\Re(\lambda^2-\lambda)+\gamma_2\Im(\lambda^2-\lambda)|}\le\frac{1}{2\sin\frac{\theta_0}{2}}.$$
	Using again Lemma \ref{lem:gamma123} (ii) and  \eqref{ppty:G}, we have
	$$\frac{G(P^*+\gamma_2 Q^*,P+\gamma_2 Q)}{|\Re(\lambda^2-\lambda)+\gamma_2\Im(\lambda^2-\lambda)|}
	\le\frac{\norm{H}^2\norm{M}^2(p+q_1)^2}{2\sin\frac{\theta_0}{2}}.$$
	Inserting these previous inequalities in \eqref{ineq:seim-1-shot:case2} gives the desired results using definitions \eqref{eq:pB}, \eqref{eq:qB} and \eqref{eq:pqB:rho(B)<1} of $p$ and $q_1$.
\end{proof}


\begin{lemma}[Case 3]
	\label{lem:seim-1-shot:case3} 
	Let $0\neq\rho(B)<1$ and let $|\lambda|\ge 1$, $\Re(\lambda^2-\lambda)<0$, $\theta \in (-\theta_0,\theta_0)$ for given $0<\theta_0\le \frac{\pi}{4}$. For any $\delta_0>0$, equation \eqref{eq:seim-1-shot:eq-eigen} cannot hold if one chooses
	$$\tau<\left(\norm{H}^2\norm{M}^2s(B)^4\cdot\frac{2c}{\delta_0}\norm{B}^2+\left(\frac{\sqrt{c}}{\delta_0}-1\right)\alpha\right)^{-1}$$
	where $c=c(\theta_0,\delta_0)\coloneqq \left(1+2\delta_0\sin\frac{3\theta_0}{2}+\delta_0^2\right)/\cos^2\frac{3\theta_0}{2}$.
	Moreover, if $0<\norm{B}<1$, the result is also true if 
	$$\tau<\left(\frac{\norm{H}^2\norm{M}^2}{(1-\norm{B})^{-4}}\cdot\frac{2c}{\delta_0}\norm{B}^2+\left(\frac{\sqrt{c}}{\delta_0}-1\right)\alpha\right)^{-1}.$$
\end{lemma}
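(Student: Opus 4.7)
The plan is to mirror the strategy of Lemmas \ref{lem:seim-1-shot:case1} and \ref{lem:seim-1-shot:case2}, but with the new choice $\gamma = \gamma_3(\lambda, \delta_0)$ supplied by case (iii) of Lemma \ref{lem:gamma123}. Substituting $\gamma_3$ into the master identity \eqref{eq:im+gamma*re:k=1} yields a relation of the same shape as \eqref{eq:seim-1-shot:case2}, in which the coefficient $2$ of $G(Q^*,Q)$ is replaced by $1+\gamma_3^2$. Since $G(Q^*,Q)\ge 0$ by \eqref{ppty:G}, I discard this non-positive contribution: the resulting left-hand side is bounded below by the analogue of the expression on the left of \eqref{ineq:seim-1-shot:case2}, and showing that this expression is strictly negative yields the desired contradiction.

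More concretely, I rewrite the identity, move the dominant term $(1+\tau\alpha)[\Re(\lambda^2-\lambda) + \gamma_3 \Im(\lambda^2-\lambda)]$ to one side, and divide through by $|\Re(\lambda^2-\lambda)+\gamma_3\Im(\lambda^2-\lambda)|$. Three ratios then need to be bounded independently of $\lambda$: (a) $|\Re(\lambda-1)+\gamma_3 \Im(\lambda-1)|/|\Re(\lambda^2-\lambda)+\gamma_3\Im(\lambda^2-\lambda)|$, (b) $1/|\Re(\lambda^2-\lambda)+\gamma_3\Im(\lambda^2-\lambda)|$, and (c) $G(P^*+\gamma_3 Q^*, P+\gamma_3 Q)/|\Re(\lambda^2-\lambda)+\gamma_3\Im(\lambda^2-\lambda)|$. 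The content of Lemma \ref{lem:gamma123}(iii) is expected to provide uniform bounds of the form $\sqrt{1+\gamma_3^2}/|\lambda| \le \sqrt{c}/\delta_0$ together with a lower bound $|\Re(\lambda^2-\lambda)+\gamma_3\Im(\lambda^2-\lambda)| \ge \delta_0/\sqrt{c}$ (or equivalent formulations involving the angle $3\theta_0/2$), and these two estimates should take care of ratios (a) and (b), matching the $\sqrt{c}/\delta_0$ appearing in the constant $C_3$.

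The main obstacle is ratio (c). Using \eqref{ppty:G} one has $|G(P^*+\gamma_3 Q^*, P+\gamma_3 Q)| \le \|H\|^2\|M\|^2 (p+|\gamma_3|\|Q\|)^2$. The naive bound $\|Q\|\le q_1$ produces a factor $(p+|\gamma_3| q_1)^2$ that blows up as $\delta_0 \to 0$ (because typically $|\gamma_3|\sim 1/\delta_0$). The fix, essential to recover the clean $2c\|B\|^2/\delta_0$ scaling in the statement, is to exploit the sharper estimate $\|Q\|\le|\sin\theta| q_2$ from \eqref{eq:qB}, and to pair the resulting factor $|\sin\theta|$ with a factor $|\sin(\theta/2)|$ present in the denominator $|\Re(\lambda^2-\lambda)+\gamma_3\Im(\lambda^2-\lambda)|$ (via the identity $\sin\theta = 2\sin(\theta/2)\cos(\theta/2)$, so that only one net $1/\delta_0$ survives after simplification). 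In the case $\norm{B}<1$ this yields a coefficient of order $\norm{B}^2/(1-\norm{B})^4\cdot c/\delta_0$; in the general case $0\neq\rho(B)<1$, the factor $s(B)^4$ replaces $(1-\norm{B})^{-4}$ through definition \eqref{eq:pqB:rho(B)<1}.

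Collecting the three bounds, the negativity requirement reduces to an inequality of the form
\[
-1 - \tau\alpha + \tau\alpha\cdot\frac{\sqrt{c}}{\delta_0} + \tau\cdot\|H\|^2\|M\|^2 s(B)^4 \cdot\frac{2c}{\delta_0}\|B\|^2 < 0,
\]
which after rearranging gives exactly the $\tau$ threshold announced in the statement. The sign choice of $\gamma_3$ in Lemma \ref{lem:gamma123}(iii) ensures that $\Re(\lambda^2-\lambda)+\gamma_3\Im(\lambda^2-\lambda)$ is of the correct (negative) sign throughout the regime, so that the direction of the contradiction is consistent, in analogy with Case 2.
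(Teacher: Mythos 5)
Your proposal has the sign logic of Case 3 inverted, and this is a genuine gap rather than a cosmetic one. The whole point of the choice of $\gamma_3$ in Lemma \ref{lem:gamma123}(iii) is that, even though $\Re(\lambda^2-\lambda)<0$, the combination $\Re(\lambda^2-\lambda)+\gamma_3\Im(\lambda^2-\lambda)$ is \emph{positive}, bounded below by $2\delta_0|\sin(\theta/2)|$. Case 3 must therefore be run like Case 1, not like Case 2: one discards the nonnegative terms $\tau\alpha$ and $\tau G(P^*+\gamma_3 Q^*,P+\gamma_3 Q)$ and shows the remainder is \emph{positive}, which requires controlling only the single negative term $-\tau(1+\gamma_3^2)G(Q^*,Q)$ against the positive dominant term. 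You instead discard $-\tau(1+\gamma_3^2)G(Q^*,Q)$ and try to show the rest is negative, which cannot succeed: after that deletion every remaining term except possibly $\tau\alpha[\Re(\lambda-1)+\gamma_3\Im(\lambda-1)]$ is nonnegative, so for small $\tau$ the expression is positive, not negative.

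Concretely, the two ratios (b) and (c) that you propose to bound are unbounded over the Case 3 region. The only available lower bound on the denominator is $2\delta_0|\sin(\theta/2)|$, which tends to $0$ as $\theta\to 0$ (and $\theta$ ranges over all of $(-\theta_0,\theta_0)\setminus\{0\}$ here); there is no uniform bound of the form $|\Re(\lambda^2-\lambda)+\gamma_3\Im(\lambda^2-\lambda)|\ge\delta_0/\sqrt{c}$. For ratio (c), the numerator $G(P^*+\gamma_3 Q^*,P+\gamma_3 Q)=\norm{H(P+\gamma_3 Q)My}^2$ contains the contribution of $P$, whose norm is of order $1/(1-\norm{B})$ and does not vanish with $\theta$; your pairing of $|\sin\theta|$ from $\norm{Q}\le|\sin\theta|q_2$ with the $|\sin(\theta/2)|$ in the denominator only neutralizes the $Q$-part, not the $P^2$ cross terms, so ratio (c) blows up as $\theta\to 0$. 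This is also why the $\norm{B}^2$ factor in the announced threshold can only come from the $G(Q^*,Q)$ term (via $q_2^2$), not from $G(P^*+\gamma_3 Q^*,P+\gamma_3 Q)$. The final inequality you write down is the correct one, but it is reached by bounding $|\Re(\lambda-1)+\gamma_3\Im(\lambda-1)|/(\Re(\lambda^2-\lambda)+\gamma_3\Im(\lambda^2-\lambda))\le\sqrt{c}/\delta_0$ and $(1+\gamma_3^2)G(Q^*,Q)/(\Re(\lambda^2-\lambda)+\gamma_3\Im(\lambda^2-\lambda))\le (2c/\delta_0)\norm{H}^2\norm{M}^2q_2^2$, not by the route you describe.
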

\begin{proof}
	Define $$\gamma_3=\gamma_3(\mathrm{sign}(\theta))\coloneqq\left\{\begin{array}{cc}
		\left(\delta_0+\sin\frac{3\theta_0}{2}\right)/\cos\frac{3\theta_0}{2} & \text{if }\theta>0,\\
		-\left(\delta_0+\sin\frac{3\theta_0}{2}\right)/\cos\frac{3\theta_0}{2} & \text{if }\theta<0
	\end{array}\right.$$ 
	as in Lemma \ref{lem:gamma123} (iii). 
	Writing \eqref{eq:im+gamma*re:k=1} for $\gamma=\gamma_3$ we obtain
	\begin{multline}\label{eq:seim-1-shot:case3}
		(1+\tau\alpha)[\Re(\lambda^2-\lambda)+\gamma_3\Im(\lambda^2-\lambda)]+\tau\alpha[\Re(\lambda-1)+\gamma_3\Im(\lambda-1)]
		\\
		+\tau\alpha+\tau G(P^*+\gamma_3 Q^*,P+\gamma_3 Q)
		-\tau(1+\gamma_3^2)G(Q^*,Q)=0.
	\end{multline}
	Since $G(P^*+\gamma_3 Q^*,P+\gamma_3 Q)\ge 0$ by \eqref{ppty:G} and $\tau\alpha\ge0$, the left-hand side of \eqref{eq:seim-1-shot:case3} is positive if $\tau$ satisfies
	$$
	(1+\tau\alpha)[\Re(\lambda^2-\lambda)+\gamma_3\Im(\lambda^2-\lambda)]-\tau\alpha|\Re(\lambda-1)+\gamma_3\Im(\lambda-1)|
	-\tau(1+\gamma_3^2)G(Q^*,Q)>0,
	$$
	or equivalently, 
	\begin{equation}\label{ineq:seim-1-shot:case3}
		1+\tau\alpha-\tau\alpha\frac{|\Re(\lambda-1)+\gamma_3\Im(\lambda-1)|}{\Re(\lambda^2-\lambda)+\gamma_3\Im(\lambda^2-\lambda)}
		-\tau(1+\gamma_3^2) \frac{G(Q^*,Q)}{\Re(\lambda^2-\lambda)+\gamma_3\Im(\lambda^2-\lambda)}>0.
	\end{equation}
	Notice that the choice of $\gamma_3$ ensures $\Re(\lambda^2-\lambda)+\gamma_3\Im(\lambda^2-\lambda)>0$, also
	$$
	1+\gamma_3^2=1+\frac{\left(\delta_0+\sin\frac{3\theta_0}{2}\right)^2}{\cos^2\frac{3\theta_0}{2}}=\frac{1+2\delta_0\sin\frac{3\theta_0}{2}+\delta_0^2}{\cos^2\frac{3\theta_0}{2}}=:c
	$$ 
	is a constant greater than $\delta_0^2$. By Lemma \ref{lem:gamma123} (iii) we have
	$$\frac{|\Re(\lambda-1)+\gamma_3\Im(\lambda-1)|}{\Re(\lambda^2-\lambda)+\gamma_3\Im(\lambda^2-\lambda)}
	\le\frac{\sqrt{1+\gamma_3^2}}{\delta_0}=\frac{\sqrt{c}}{\delta_0}.
	$$
	Using again Lemma \ref{lem:gamma123} (iii) and \eqref{ppty:G}, we have
	\begin{multline*}
		\frac{G(Q^*,Q)}{\Re(\lambda^2-\lambda)+\gamma_1\Im(\lambda^2-\lambda)}
		\le\frac{(\norm{H}\norm{M}|\sin\theta|q_2)^2}{2\delta_0|\sin(\theta/2)|}
		=
		\frac{2}{\delta_0}\left|\sin\frac{\theta}{2}\right|\cos^2\frac{\theta}{2}\norm{H}^2\norm{M}^2q_2^2
		\\
		\le\frac{2}{\delta_0}\norm{H}^2\norm{M}^2q_2^2.
	\end{multline*}
	Inserting the two previous inequalities in \eqref{ineq:seim-1-shot:case3} gives the desired results using definitions \eqref{eq:qB} and \eqref{eq:pqB:rho(B)<1} of $q_2$.
\end{proof}


\subsection{Final result ($k=1$)}

Considering Proposition~\ref{prop:seim-1-shot:tau:B=0} (for $B=0$), Proposition~\ref{prop:seim-1-shot:tau:real-lam} (for real eigenvalues and $B\neq 0$) and taking the bound in Proposition~\ref{prop:seim-1-shot:tau:complex-lam} (for complex eigenvalues and $B\neq 0$), we obtain a sufficient condition on the descent step $\tau$ to ensure convergence of the semi-implicit one-step one-shot method. 

\begin{theorem}[Convergence of semi-implicit one-step one-shot]
	\label{th:seim-1-shot:tau:all}
	Under assumption \eqref{hypo}, the one-step one-shot method \eqref{alg:seim-1-shot} converges for sufficiently small $\tau$. In particular, for $\norm{B}<1$, there exists an explicit piecewise (at most a $4$th order)
	polynomial function $\mathcal{P}_1$  and a pure constant $C>0$ such that $\mathcal{P}_1>0$ on $[0,1)$ and it is enough to take 
	$$\tau<\left(\frac{\norm{H}^2\norm{M}^2}{(1-\norm{B})^4}\mathcal{P}_1(\norm{B})+C\alpha\right)^{-1}.$$
\end{theorem}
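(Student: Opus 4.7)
The plan is to deduce the theorem directly from the case analysis already carried out in Propositions~\ref{prop:seim-1-shot:tau:B=0}, \ref{prop:seim-1-shot:tau:real-lam} and \ref{prop:seim-1-shot:tau:complex-lam}. The starting observation is that \eqref{seim-1-shot:sys-err} is a linear fixed-point iteration, so the error sequence $(p^n,u^n,\sigma^n)$ converges to zero for every initial guess if and only if the spectral radius of the block iteration matrix in \eqref{seim-1-shot:itermat} is strictly less than one. Proposition~\ref{prop:seim-1-shot:eigen-eq} already excludes the value $\lambda=1$, so the whole task reduces to showing that, for $\tau$ sufficiently small, no eigenvalue $\lambda$ with $|\lambda|\ge 1$ and $\lambda\neq 1$ can satisfy the eigenvalue equation \eqref{eq:seim-1-shot:eq-eigen}.

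I would then split the analysis according to the nature of $\lambda$ and of $B$. Real eigenvalues with $|\lambda|\ge 1$, $\lambda\neq 1$ are ruled out by Proposition~\ref{prop:seim-1-shot:tau:real-lam} for every $\tau>0$, so they impose no constraint. For non-real eigenvalues, the degenerate case $B=0$ is handled by the single condition $(\norm{H}^2\norm{M}^2-\alpha)\tau<1$ from Proposition~\ref{prop:seim-1-shot:tau:B=0}. When $0\neq\rho(B)<1$, Proposition~\ref{prop:seim-1-shot:tau:complex-lam} provides three competing explicit bounds on $\tau$, one for each of the three regions (Cases 1, 2, 3) of the complex plane into which $\lambda$ may fall; imposing all three simultaneously excludes every non-real eigenvalue lying outside the open unit disk. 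Taking the minimum of these bounds together with the one from Proposition~\ref{prop:seim-1-shot:tau:B=0} in the degenerate case yields convergence.

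The last step is bookkeeping to put the resulting condition in the compact form stated in the theorem. For a fixed admissible choice of $\theta_0\in(0,\pi/4]$ and $\delta_0>0$, I would set
\[
\mathcal{P}_1(b)\coloneqq\max\{\varphi_1(b),\varphi_2(b),\varphi_3(b)\},\qquad C\coloneqq\max\{C_1,C_2,C_3\},
\]
using the functions and constants of Proposition~\ref{prop:seim-1-shot:tau:complex-lam}. Since $\varphi_2$ is of degree $4$ and $\varphi_1,\varphi_3$ are quadratic in $b$, the envelope $\mathcal{P}_1$ is piecewise polynomial of degree at most $4$. Its strict positivity on $[0,1)$ is immediate from $\varphi_2(b)=\tfrac{(1+b)^2(1-b)^2}{2\sin(\theta_0/2)}>0$ on that interval, so the announced inequality on $\tau$ is always a genuine (nonempty) constraint.

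The $B=0$ case is automatically absorbed because $\varphi_2(0)>0$ gives a finite nontrivial value $\mathcal{P}_1(0)$, and one may, if necessary, enlarge the pure constant $C$ so as to also dominate the coefficient in the bound of Proposition~\ref{prop:seim-1-shot:tau:B=0}. There is no genuine obstacle at this stage: the entire analytic difficulty has been consumed by the three propositions, and the only subtlety is checking that the aggregated bound retains the structural form stated, with a strictly positive piecewise polynomial $\mathcal{P}_1$ independent of $\alpha$ and a pure constant $C$ independent of $B$, $H$, $M$ and $\alpha$.
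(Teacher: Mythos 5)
Your proposal is correct and follows essentially the same route as the paper, which obtains the theorem by combining Propositions~\ref{prop:seim-1-shot:tau:B=0}, \ref{prop:seim-1-shot:tau:real-lam} and \ref{prop:seim-1-shot:tau:complex-lam} and packaging the three complex-eigenvalue bounds into a single piecewise polynomial envelope. Your bookkeeping ($\mathcal{P}_1=\max_i\varphi_i$, $C=\max_i C_i$, positivity via $\varphi_2$, absorption of the $B=0$ case) is exactly the intended content of the paper's terse argument.
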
	

We emphasize that the bound on $\tau$ in Theorem \ref{th:seim-1-shot:tau:all} depends only on $\norm{B},\norm{M},\norm{H}$ and the regularization parameter $\alpha$. This bound does not depend on the dimensions of $\sigma$, $u$ and $g$.

\section{Convergence of the multi-step one-shot method ($k\ge 1$)}
\label{sec:seim-k-shot}

We now tackle the general case of semi-implicit multi-step one-shot methods, that is algorithm \eqref{alg:seim-k-shot} with $k\ge 1$. The procedure is quite similar to the case $k=1$ but with more involved technicalities.

\subsection{Block iteration matrix and eigenvalue equation}
\label{sec:seim-k-shot:eigen-eq}

Let $k\ge1$ be the number of inner iterations for $u$ and $p$. 
First we express $(\sigma^{n+1},u^{n+1},p^{n+1})$ in terms of $(\sigma^n,u^n,p^n)$ in a matrix form as for the case $k=1$.
More precisely, the system \eqref{alg:seim-k-shot} can be equivalently written as
\begin{equation}
	\label{seim-k-shot:rewrite}
	\begin{cases}
		\sigma^{n+1}=\sigma^n-\tau M^*p^n,\\
		u^{n+1}=B^ku^n+T_kM\sigma^n-\tau T_kMM^*p^n+T_kF,\\
		p^{n+1}=[(B^*)^k-\tau X_kMM^*]p^n+U_ku^n+X_kM\sigma^n+X_kF-T_k^*H^*g\\
	\end{cases}
\end{equation}
where
\begin{align}
	T_k\coloneqq& I+B+...+B^{k-1}=(I-B)^{-1}(I-B^k), \label{eq:Tk} \\
	U_k\coloneqq& (B^*)^{k-1}H^*H+(B^*)^{k-2}H^*HB+...+H^*HB^{k-1}, \quad\text{and}, \label{eq:Uk} \\
	X_k\coloneqq& \left\{\begin{array}{cl}
		(B^*)^{k-2}H^*HT_1+(B^*)^{k-3}H^*HT_2+...+H^*HT_{k-1} & \text{if }k\ge 2,\\
		0 & \text{if }k=1. 
	\end{array}\right. \label{eq:Xk}
\end{align}
Before analyzing recursion \eqref{seim-k-shot:rewrite}, we gather in the following lemma some useful properties of $T_k, U_k$ and $X_k$.
\begin{lemma}\label{lem:ppty:TUXk} 
	We have the following properties.
	\begin{enumerate}[label=(\roman*)]
		\item The matrices $U_k$ and $X_k$ can be respectively rewritten as 
		\begin{gather*} 
			U_k=\sum_{i+j=k-1} (B^*)^iH^*HB^j, \; \forall k\ge 1, \displaybreak[0] \\
			X_k=\sum_{l=0}^{k-2}\sum_{i+j=l} (B^*)^iH^*HB^j=\sum_{l=1}^{k-1}U_l, \; \forall k\ge 2.
		\end{gather*}
		\item The matrices $U_k$ and $X_k$ are self-adjoint: $U_k^*=U_k$, $X_k^*=X_k$.
		\item We have the relation 
		\begin{equation}\label{iden:TUXk}
			U_kT_k-X_kB^k+X_k=T_k^*H^*HT_k, \quad\forall k\ge 1.
		\end{equation}
	\end{enumerate}
\end{lemma}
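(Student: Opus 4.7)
The plan is to reduce all three statements to the symmetric double-sum form $\sum (B^*)^i H^* H B^j$ and, for the identity in (iii), to derive a cleaner auxiliary formula first rather than compare coefficients of each $(B^*)^a H^* H B^b$ by hand.

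For (i), I would unfold the definitions directly. The $i$th term of $U_k$ is $(B^*)^{k-1-i} H^* H B^i$ for $i = 0, \ldots, k-1$, so the substitution $i' = k-1-i$, $j = i$ gives exactly $U_k = \sum_{i+j=k-1}(B^*)^i H^* H B^j$. For $X_k$, substituting $T_l = \sum_{j=0}^{l-1} B^j$ into the defining formula \eqref{eq:Xk} and reindexing $i = k-1-l$ produces $X_k = \sum_{i,j \geq 0,\, i+j \leq k-2}(B^*)^i H^* H B^j$; slicing this sum by the value $s = i+j$ yields the first claimed identity, and the second one ($X_k = \sum_{l=1}^{k-1}U_l$) is then immediate from the double-sum formula for $U_l$. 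Part (ii) follows at once: taking the adjoint of $(B^*)^i H^* H B^j$ yields $(B^*)^j H^* H B^i$, and both index sets $\{(i,j): i+j=k-1\}$ and $\{(i,j): i,j \geq 0,\, i+j \leq k-2\}$ are invariant under $i \leftrightarrow j$, so $U_k^* = U_k$ and $X_k^* = X_k$.

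The interesting step is (iii). Rather than expanding $U_k T_k$ and $X_k B^k$ directly as double sums over $(a,b)$ and matching them with $T_k^* H^* H T_k$ by a case analysis on overlapping index regions (which works but is tedious), I would first establish the auxiliary identity
\[
T_k^* H^* H - U_k = X_k (I - B).
\]
Using the elementary polynomial identity $I - B^m = T_m (I - B)$, we compute
\[
T_k^* H^* H - U_k = \sum_{a=0}^{k-1}(B^*)^a H^* H (I - B^{k-1-a}) = \sum_{a=0}^{k-2}(B^*)^a H^* H T_{k-1-a}(I - B),
\]
and the last sum equals $X_k (I-B)$ after reindexing $m = k-1-a$ in the original definition \eqref{eq:Xk} of $X_k$. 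Multiplying this auxiliary identity on the right by $T_k$ and using that $T_k$, being a polynomial in $B$, commutes with $I - B$ so that $(I - B) T_k = T_k (I - B) = I - B^k$, I obtain $T_k^* H^* H T_k - U_k T_k = X_k (I - B^k)$, which rearranges to \eqref{iden:TUXk}.

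The only real obstacle is spotting the auxiliary identity above; once it is in hand, the commutation of $T_k$ with $I - B$ finishes (iii) in a single line, while (i) and (ii) are pure reindexing.
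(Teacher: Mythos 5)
Your proposal is correct and follows essentially the same route as the paper: both proofs reduce (iii) to the intermediate identity $U_k = X_k(B-I)+T_k^*H^*H$ (your auxiliary identity $T_k^*H^*H-U_k=X_k(I-B)$ rewritten) and then right-multiply by $T_k$, using $(I-B)T_k=T_k(I-B)=I-B^k$. The only difference is how that intermediate identity is obtained --- you compute it directly via the telescoping relation $I-B^m=T_m(I-B)$, whereas the paper deduces it from the recursion $X_{k+1}=B^*X_k+H^*HT_k$ combined with the self-adjointness of $X_{k+1}$ and the relation $X_{k+1}=X_k+U_k$; both derivations are valid, and your handling of (i) and (ii) by reindexing matches the paper's.
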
	
\begin{proof}
	Property  is easy to check from the definitions \eqref{eq:Tk}, \eqref{eq:Uk} and \eqref{eq:Xk}. Property (ii) straightforwardly follows from (i). 
	
	
	Now we prove (iii). For $k=1$, we have $U_1=H^*H$, $T_1=I$ and $X_1=0$, hence the identity is verified. For $k\ge 2$, we remark that $X_{k+1}=B^*X_k+H^*HT_k$ by definition \eqref{eq:Xk}. Thanks to (ii) we know that $X_{k+1}$ is self-adjoint, hence $X_{k+1}=X_{k+1}^*=X_kB+T_k^*H^*H$. On the other hand, from  we get that $X_{k+1}=X_k+U_k$. Thus,
	$$X_k+U_k=X_kB+T_k^*H^*H,\quad\mbox{ or equivalently, }\quad U_k=X_k(B-I)+T_k^*H^*H.$$
	Finally,
	$$U_kT_k=X_k(B-I)T_k+T_k^*H^*HT_k=X_k(B^k-I)+T_k^*H^*HT_k.$$
\end{proof}

Now, we consider the errors $(\sigma^n-\sigma^\mathrm{ex}_\alpha,u^n-u(\sigma^\mathrm{ex}_\alpha),p^n-p(\sigma^\mathrm{ex}_\alpha))$ with respect to the regularized solution  at the $n$-th iteration, and, by abuse of notation, we denote them by $(\sigma^n,u^n,p^n)$. We obtain that the errors satisfy
\begin{equation}
	\label{seim-k-shot:sys-err}
	\begin{cases}
		\sigma^{n+1}=\frac{1}{1+\tau\alpha}\sigma^n-\frac{\tau }{1+\tau\alpha}M^*p^n,\\
		u^{n+1}=B^ku^n+\frac{1}{1+\tau\alpha}T_kM\sigma^n-\frac{\tau}{1+\tau\alpha}T_kMM^*p^n,\\
		p^{n+1}=\left[(B^*)^k-\frac{\tau}{1+\tau\alpha} X_kMM^*\right]p^n+U_ku^n+\frac{1}{1+\tau\alpha}X_kM\sigma^n,\\
	\end{cases}
\end{equation}
\noindent or equivalently, by putting in evidence the block iteration matrix
\begin{equation}
	\label{seim-k-shot:itermat}
	\begin{bmatrix}
		p^{n+1}\\ u^{n+1}\\ \sigma^{n+1}
	\end{bmatrix}=\begin{bmatrix}
		(B^*)^k-\frac{\tau }{1+\tau\alpha}X_kMM^* & U_k & \frac{1}{1+\tau\alpha}X_kM \\
		-\frac{\tau }{1+\tau\alpha}T_kMM^* & B^k & \frac{1}{1+\tau\alpha}T_kM \\
		-\frac{\tau}{1+\tau\alpha} M^* & 0 & \frac{1}{1+\tau\alpha}I
	\end{bmatrix}
	\begin{bmatrix}
		p^{n}\\ u^{n}\\ \sigma^{n}
	\end{bmatrix}.
\end{equation}

\begin{proposition}
	\label{prop:seim-k-shot:eq-eigen} 
	Assume that $\lambda\in\C, |\lambda|\ge 1$ is an eigenvalue of the iteration matrix in  \eqref{seim-k-shot:itermat}. If $\lambda\in\C$, $\lambda\notin\mathrm{Spec}(B)$ then $\exists \, y\in\C^{n_\sigma}, \norm{y}=1$ such that:
	\begin{equation}\label{eq:seim-k-shot:ori-eq-eigen}
		(1+\tau\alpha)\lambda-1+\tau\lambda\scalar{M^*[\lambda I-(B^*)^k]^{-1}[(\lambda-1)X_k+T_k^*H^*HT_k](\lambda I-B^k)^{-1}My,y}=0.
	\end{equation}
	In particular,  $\lambda=1$ is not an eigenvalue of the iteration matrix.
\end{proposition}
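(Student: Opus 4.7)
The plan is to mirror the proof of Proposition~\ref{prop:seim-1-shot:eigen-eq} but use Lemma~\ref{lem:ppty:TUXk}(iii) to collapse the more complicated expression for $\tilde p$ into the compact form involving $(\lambda-1)X_k+T_k^*H^*HT_k$. Concretely, starting from an eigenpair $(\tilde p,\tilde u,y)$ of the block matrix in \eqref{seim-k-shot:itermat} associated with $\lambda$, I write the three eigenvalue identities
\begin{equation*}
\begin{cases}
\lambda y=\tfrac{1}{1+\tau\alpha}y-\tfrac{\tau}{1+\tau\alpha}M^*\tilde p,\\
\lambda \tilde u=B^k\tilde u+\tfrac{1}{1+\tau\alpha}T_kMy-\tfrac{\tau}{1+\tau\alpha}T_kMM^*\tilde p,\\
\lambda \tilde p=\bigl[(B^*)^k-\tfrac{\tau}{1+\tau\alpha}X_kMM^*\bigr]\tilde p+U_k\tilde u+\tfrac{1}{1+\tau\alpha}X_kMy,
\end{cases}
\end{equation*}
and use the first to substitute $\tfrac{\tau}{1+\tau\alpha}M^*\tilde p=\tfrac{1}{1+\tau\alpha}y-\lambda y$ into the second and third. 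The second one collapses exactly as in the $k=1$ case to $\lambda \tilde u=B^k\tilde u+\lambda T_kMy$, so (since $\lambda\notin\mathrm{Spec}(B)$ implies $\lambda\notin\mathrm{Spec}(B^k)$) we get $\tilde u=\lambda(\lambda I-B^k)^{-1}T_kMy$. Similarly, the third becomes $(\lambda I-(B^*)^k)\tilde p=\lambda X_kMy+U_k\tilde u$.

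The main algebraic step is to rewrite $X_kMy+U_k(\lambda I-B^k)^{-1}T_kMy$. Factoring out $(\lambda I-B^k)^{-1}My$ on the right gives the bracket $\lambda X_k-X_kB^k+U_kT_k$, and Lemma~\ref{lem:ppty:TUXk}(iii) turns $U_kT_k-X_kB^k$ into $T_k^*H^*HT_k-X_k$. This yields exactly
\begin{equation*}
\tilde p=\lambda\bigl[\lambda I-(B^*)^k\bigr]^{-1}\bigl[(\lambda-1)X_k+T_k^*H^*HT_k\bigr](\lambda I-B^k)^{-1}My.
\end{equation*}
As in the $k=1$ proof, $y\neq 0$ since otherwise $\tilde u=\tilde p=0$. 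Inserting this $\tilde p$ back in the first line and taking the scalar product with $y$ (using $\|y\|=1$, after rescaling) produces \eqref{eq:seim-k-shot:ori-eq-eigen}.

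For the final assertion, setting $\lambda=1$ in \eqref{eq:seim-k-shot:ori-eq-eigen} reduces the bracket to $T_k^*H^*HT_k$, so the equation becomes $\tau\alpha+\tau\|HT_k(I-B^k)^{-1}My\|^2=0$. Since $T_k=(I-B)^{-1}(I-B^k)$ by \eqref{eq:Tk}, we have $T_k(I-B^k)^{-1}=(I-B)^{-1}$, so the norm equals $\|H(I-B)^{-1}My\|$, which is strictly positive by the injectivity assumption in \eqref{hypo}; this contradiction rules out $\lambda=1$.

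The only non-routine step is recognizing that the identity in Lemma~\ref{lem:ppty:TUXk}(iii) is designed precisely to make the telescoping $\lambda X_k-X_kB^k+U_kT_k=(\lambda-1)X_k+T_k^*H^*HT_k$ work. Once that observation is in place, everything else is a direct $k=1$-style manipulation, and the caveat $\lambda\notin\mathrm{Spec}(B)$ suffices because $\mathrm{Spec}(B^k)\subset\{\mu^k:\mu\in\mathrm{Spec}(B)\}$ so the inverses $(\lambda I-B^k)^{-1}$ and $(\lambda I-(B^*)^k)^{-1}$ exist.
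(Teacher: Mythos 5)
Your proposal is correct and follows essentially the same route as the paper's proof: substitute the first eigenvector equation into the other two, solve for $\tilde u$ and $\tilde p$ using the commutation of $T_k$ with $(\lambda I-B^k)^{-1}$, collapse $U_kT_k+X_k(\lambda I-B^k)$ to $(\lambda-1)X_k+T_k^*H^*HT_k$ via Lemma~\ref{lem:ppty:TUXk}(iii), and rule out $\lambda=1$ through $T_k(I-B^k)^{-1}=(I-B)^{-1}$ and the injectivity of $H(I-B)^{-1}M$. The only cosmetic remark is that the invertibility of $\lambda I-B^k$ for $|\lambda|\ge1$ follows most directly from $\rho(B^k)=\rho(B)^k<1$ rather than from the spectral mapping inclusion you cite, but this does not affect the argument.
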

\noindent The proof is similar to the proof of Proposition~\ref{prop:seim-1-shot:eigen-eq}. The slight difference is that in the calculation we use \eqref{iden:TUXk} to simplify some terms and exploit the fact that $T_k$ and $(\lambda I-B^k)^{-1}$ commute.

\begin{proof}
	Since $\lambda\in\C$ is an eigenvalue of the iteration matrix, there exists a non-zero vector $(\tilde{p},\tilde{u},y)\in\C^{n_u+n_u+n_\sigma}$ such that 
		\begin{equation*}
			\begin{cases}
				\lambda y = \frac{1}{1+\tau\alpha}y-\frac{\tau}{1+\tau\alpha}M^*\tilde{p},\\
				\lambda\tilde{u}= B^k\tilde{u}+\frac{1}{1+\tau\alpha}T_kMy-\frac{\tau}{1+\tau\alpha} T_kMM^*\tilde{p},\\
				\lambda\tilde{p}=\left[(B^*)^k-\frac{\tau}{1+\tau\alpha}X_kMM^*\right]\tilde{p}+U_k\tilde{u}+\frac{1}{1+\tau\alpha}X_kMy.
			\end{cases}
		\end{equation*}
		By inserting the first equation into the second and the third equations, we simplify this system of equations as
		\begin{equation}
			\label{eq:seim-k-shot:eigenvec}
			\begin{cases}
				\lambda y = \frac{1}{1+\tau\alpha}y-\frac{\tau}{1+\tau\alpha}M^*\tilde{p},\\
				\lambda\tilde{u}= B^k\tilde{u}+\lambda T_kMy,\\
				\lambda\tilde{p}=(B^*)^k\tilde{p}+U_k\tilde{u}+\lambda X_kMy.
			\end{cases}
		\end{equation}
		The second equation in \eqref{eq:seim-k-shot:eigenvec} gives us directly $\tilde{u}$ in terms of $y$:
		\begin{equation}
			\label{eq:u(y)_k-shot}
			\tilde{u}=\lambda(\lambda I-B^k)^{-1}T_kMy=\lambda T_k(\lambda I-B^k)^{-1}My.
		\end{equation}
		The third equation in \eqref{eq:seim-k-shot:eigenvec} gives us $\tilde{p}$ in terms of $\tilde{u}$ and $y$:
		$$\tilde{p}=[\lambda I-(B^*)^k]^{-1}U_k\tilde{u}+\lambda [\lambda I-(B^*)^k]^{-1}X_kMy,$$
		then by combing with the equation \eqref{eq:u(y)_k-shot}, we obtain $\tilde{p}$ in terms of $y$:
		\begin{equation}
			\label{eq:p(y)_k-shot}
			\tilde{p}=\lambda(\lambda I-(B^*)^k)^{-1}V(\lambda I-B^k)^{-1}My,
		\end{equation}
		where 
		$$V\coloneqq U_kT_k+X_k(\lambda I -B^k)=(\lambda-1)X_k+T_k^*H^*HT_k$$ 
		thanks to Lemma \ref{lem:ppty:TUXk}. We also see that $y\neq 0$; indeed if $y=0$ then equations \eqref{eq:u(y)_k-shot} and \eqref{eq:p(y)_k-shot} give $u=0$ and $p=0$, that is a contradiction. Next, inserting the expression of $\tilde{p}$ in equation \eqref{eq:p(y)_k-shot} back into the first equation in \eqref{eq:seim-k-shot:eigenvec}, we get
		$$\lambda y=\frac{1}{1+\tau\alpha}y-\frac{\tau}{1+\tau\alpha}\lambda M^*(\lambda I-(B^*)^k)^{-1}V(\lambda I-B^k)^{-1}My,$$
		which leads to
		\begin{equation}\label{eq:seim-k-shot:pre-eq-eigen}
			\left[(1+\alpha\tau)\lambda-1\right]y+\tau\lambda M^*[\lambda I-(B^*)^k]^{-1}V(\lambda I-B^k)^{-1}My=0.
		\end{equation}
		Finally, by taking scalar product of \eqref{eq:seim-k-shot:pre-eq-eigen} with $y$, then dividing by $\norm{y}^2$, we obtain \eqref{eq:seim-k-shot:ori-eq-eigen}.
	
	\color{black}{\noindent Now assume that $\lambda=1$ is an eigenvalue of the iteration matrix, then \eqref{eq:seim-k-shot:ori-eq-eigen} yields
		$$\alpha+\norm{H(I-B)^{-1}My}^2=0,$$
		which cannot be true due to the injectivity of $H(I-B)^{-1}M$.}
\end{proof}

In the following sections we will show that, for sufficiently small $\tau$, equation \eqref{eq:seim-k-shot:ori-eq-eigen} admits no solution $|\lambda|\ge 1$, thus algorithm \eqref{alg:seim-k-shot} converges. When $\lambda\neq 0$, it is convenient to rewrite \eqref{eq:seim-k-shot:ori-eq-eigen} as
\begin{multline}	
	\label{eq:seim-k-shot:eq-eigen}
	(1+\tau\alpha)\lambda^2-\lambda
	+\tau\scalar{M^*\left[I-(B^*)^k/\lambda\right]^{-1}[(\lambda-1)X_k+T_k^*H^*HT_k]\left(I-B^k/\lambda\right)^{-1}My,y}=0.
\end{multline}

\begin{remark}
	\label{rk:one-dim}
	The simple scalar case where $n_u, n_\sigma, n_g =1$ and $\alpha=0$ is analyzed in \cite{bon22}, for which necessary and sufficient conditions on $\tau$ are derived.
\end{remark}

\begin{remark}\label{rk:B=0,k>2}
	Note that when $B=0$ and $k \ge 2$, the semi-implicit $k$-step one-shot \eqref{alg:seim-k-shot} is equivalent to the semi-implicit gradient descent method \eqref{alg:seimgd}, which converges if and only if $(\rho(A^*A)-\alpha)\tau<2$.
\end{remark}

For the analysis we use some auxiliary results proved in Appendix~\ref{app:lems}, and the following bounds for $s(B^k), T_k, X_k$.
\begin{lemma}\label{lem:sTXk}
	If $\norm{B}<1$ then for every $k\ge 1$:
	$$
	s(B^k)=s((B^*)^k)\le\frac{1}{1-\norm{B}^k}, \quad
	\norm{T_k} \le \frac{1-\norm{B}^k}{1-\norm{B}}
	$$
	and
	$$
	\norm{X_k} \le \frac{\norm{H}^2(1-k\norm{B}^{k-1}+(k-1)\norm{B}^k)}{(1-\norm{B})^2}.
	$$
\end{lemma}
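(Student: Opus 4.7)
The plan is to handle the three bounds independently, each via elementary estimates on Neumann series / geometric sums, exploiting submultiplicativity of the operator norm.

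For $s(B^k)$, I would argue directly from the definition of $s(\cdot)$. When $|z|\ge 1$ and $\norm{B}<1$, one has $\norm{B^k/z}\le\norm{B}^k<1$, so the Neumann series $(I-B^k/z)^{-1}=\sum_{j\ge 0}(B^k/z)^j$ converges with
\[
\norm{(I-B^k/z)^{-1}}\le \sum_{j\ge 0}\frac{\norm{B}^{kj}}{|z|^j}\le\frac{1}{1-\norm{B}^k}.
\]
Taking the supremum over $|z|\ge 1$ gives the claim for $s(B^k)$, and the corresponding bound for $s((B^*)^k)$ follows at once because $\norm{(B^*)^k}=\norm{B^k}$ and $((I-(B^*)^k/z)^{-1})^*=(I-B^k/\overline{z})^{-1}$, so the two suprema coincide.

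For $\norm{T_k}$, since $T_k=I+B+\cdots+B^{k-1}$, the triangle inequality and submultiplicativity immediately yield $\norm{T_k}\le\sum_{j=0}^{k-1}\norm{B}^j=\frac{1-\norm{B}^k}{1-\norm{B}}$.

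The bound on $\norm{X_k}$ is the only one requiring a small computation. Using the rewriting $X_k=\sum_{l=0}^{k-2}\sum_{i+j=l}(B^*)^iH^*HB^j$ from Lemma~\ref{lem:ppty:TUXk}~(i), the triangle inequality and $\norm{H^*H}\le\norm{H}^2$ give
\[
\norm{X_k}\le \norm{H}^2\sum_{l=0}^{k-2}(l+1)\norm{B}^l.
\]
The main (and only slightly non-trivial) step is then to evaluate the finite sum $S(b)\coloneqq\sum_{l=0}^{k-2}(l+1)b^l$ in closed form. Writing $S(b)=\tfrac{d}{db}\sum_{l=0}^{k-2}b^{l+1}=\tfrac{d}{db}\bigl(\tfrac{b-b^k}{1-b}\bigr)$ and differentiating yields
\[
S(b)=\frac{1-kb^{k-1}+(k-1)b^k}{(1-b)^2},
\]
which plugged back in with $b=\norm{B}$ gives exactly the stated bound on $\norm{X_k}$. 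No obstacle beyond this derivative computation is expected; the whole lemma is essentially a bookkeeping exercise once the defining formulas of $T_k$ and $X_k$ are combined with the assumption $\norm{B}<1$.
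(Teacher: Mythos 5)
Your proposal is correct and follows essentially the same route as the paper: a Neumann-series bound for $s(B^k)$ (which the paper delegates to Lemma~\ref{lem:inv(I-T/z)}), the triangle inequality for $T_k$, and the reduction of $\norm{X_k}$ to the scalar sum $\norm{H}^2\sum_{l=0}^{k-2}(l+1)\norm{B}^l$ followed by its closed-form evaluation. The only cosmetic differences are that you invoke the double-sum form of $X_k$ from Lemma~\ref{lem:ppty:TUXk}~(i) rather than the definition \eqref{eq:Xk}, and that you justify the closed form by differentiating the geometric sum where the paper simply states it.
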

\begin{proof}
	The bound for $s(B^k)$ is proved using Lemma~\ref{lem:inv(I-T/z)} and $\norm{B^k} \le \norm{B}^k$. Next, from \eqref{eq:Tk} we have
	$$\norm{T_k}\le 1+\norm{B}+...+\norm{B}^{k-1}=\frac{1-\norm{B}^k}{1-\norm{B}}.$$
	From \eqref{eq:Xk}, 
	if $k\ge 2$ we have
	\begin{align*}
		\norm{X_k}&\le\norm{H}^2 \bigl(\norm{B}^{k-2}+\norm{B}^{k-3}(1+\norm{B})+...+(1+\norm{B}+...+\norm{B}^{k-2})\bigr)\\
		&=\displaystyle\norm{H}^2(1+2\norm{B}+...+(k-1)\norm{B}^{k-2})\\
		&=\frac{\norm{H}^2(1-k\norm{B}^{k-1}+(k-1)\norm{B}^k)}{(1-\norm{B})^2}.
	\end{align*}
	
\end{proof}

\subsection{Location of eigenvalues in the complex plane}
\label{subsec:seim-k-shot:complex-eigen}
We first establish conditions on the descent step $\tau>0$ such that the real eigenvalues stay inside the unit disk. Recall that we have already proved that $\lambda=1$ is not an eigenvalue for any $k$.

\begin{proposition}[Real eigenvalues]
	\label{prop:seim-k-shot:tau:real-lam}
	Let $0\neq \rho(B)<1$ and $\lambda\in\R, \lambda\neq 1, |\lambda|\ge 1$. Then equation  \eqref{eq:seim-k-shot:eq-eigen} cannot hold if $\tau>0$ and
	$$\left(\norm{M}^2\norm{X_k}s(B^k)^2-\frac{1}{2}\alpha\right)\tau<1.$$
	Moreover, if $\norm{B}<1$, the result is also true if $\tau>0$ and
	$$\left(\frac{\norm{H}^2\norm{M}^2}{(1-\norm{B})^2(1-\norm{B}^k)^2}\left(1-k\norm{B}^{k-1}+(k-1)\norm{B}^k\right)-\frac{1}{2}\alpha\right)\tau<1.$$
\end{proposition}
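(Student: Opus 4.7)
Let $\lambda\in\R$ with $|\lambda|\ge 1$, $\lambda\neq 1$. Since $\rho(B^k)\le\rho(B)^k<1\le|\lambda|$, the resolvent $R_\lambda\coloneqq(I-B^k/\lambda)^{-1}$ is well defined and real. Using the self-adjointness of $X_k$ (Lemma~\ref{lem:ppty:TUXk}(ii)) together with the fact that $T_k$ and $R_\lambda$ commute, I would rewrite the eigenvalue equation \eqref{eq:seim-k-shot:eq-eigen} as
\[
(1+\tau\alpha)\lambda^2-\lambda+\tau(\lambda-1)\,a+\tau\,b=0,
\]
with $a\coloneqq\scalar{X_k R_\lambda My, R_\lambda My}\in\R$ and $b\coloneqq\norm{HT_kR_\lambda My}^2\ge 0$. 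By Cauchy--Schwarz and the definition of $s$ (which gives $\norm{R_\lambda}\le s(B^k)$ as soon as $|\lambda|\ge 1$), we have $|a|\le C$ where $C\coloneqq\norm{M}^2\norm{X_k}s(B^k)^2$.

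Dropping the nonnegative term $\tau b$ and choosing the worst sign of $a$ reduces the problem to showing
\[
(1+\tau\alpha)\lambda^2-\lambda-\tau|\lambda-1|\,C>0\quad\text{for all real }\lambda\text{ with }|\lambda|\ge 1,\;\lambda\neq 1.
\]
This splits naturally into verifying positivity of two quadratics,
\[
q_+(\lambda)\coloneqq(1+\tau\alpha)\lambda^2-(1+\tau C)\lambda+\tau C\quad\text{on }(1,\infty),
\]
\[
q_-(t)\coloneqq(1+\tau\alpha)t^2+(1-\tau C)t-\tau C\quad\text{on }[1,\infty),
\]
where $t\coloneqq-\lambda$ in the second case.

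A direct evaluation gives $q_+(1)=\tau\alpha\ge 0$ and, more importantly, $q_-(1)=2+\tau\alpha-2\tau C$, the latter being strictly positive if and only if $(C-\alpha/2)\tau<1$---exactly the hypothesis of the proposition. A quick check of the vertex abscissas $(1+\tau C)/(2(1+\tau\alpha))$ and $(\tau C-1)/(2(1+\tau\alpha))$ shows that both lie below $1$ under this hypothesis, so $q_\pm$ are strictly increasing on the relevant half-lines, which yields the claimed strict positivity. The second bound in the statement then follows by inserting the explicit estimates $s(B^k)\le 1/(1-\norm{B}^k)$ and $\norm{X_k}\le\norm{H}^2(1-k\norm{B}^{k-1}+(k-1)\norm{B}^k)/(1-\norm{B})^2$ from Lemma~\ref{lem:sTXk} into $C$.

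The main point to identify---really the only nontrivial observation---is that the sharp constraint is driven by $\lambda=-1$: on the positive side the argument only requires $(C-2\alpha)\tau\le 1$, whereas on the negative side one truly needs $(C-\alpha/2)\tau<1$. Recognizing this binding case is what produces the coefficient $\tfrac12$ in front of $\alpha$ in the statement; the analogous computation for complex $\lambda$ in the next subsection is expected to be considerably more delicate since the term $(\lambda-1)X_k$ no longer gives a real quadratic form.
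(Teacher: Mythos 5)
Your proposal is correct and follows essentially the same route as the paper: bound the real quadratic form $\scalar{X_k R_\lambda My, R_\lambda My}$ by $C=\norm{M}^2\norm{X_k}s(B^k)^2$, drop the nonnegative term $\norm{HT_kR_\lambda My}^2$, split into $\lambda>1$ and $\lambda\le -1$, and observe that $\lambda=-1$ is the binding case producing the coefficient $\tfrac12\alpha$. The only cosmetic difference is that you argue via the vertices of the two quadratics $q_\pm$, whereas the paper uses $\lambda(\lambda-1)\ge\lambda-1$ on the positive branch and the monotonicity of $\lambda\mapsto\frac{(1+\tau\alpha)\lambda^2-\lambda}{1-\lambda}$ on $(-\infty,-1]$; both yield the same conditions.
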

\begin{proof}
	When $\lambda\in\R$ equation \eqref{eq:seim-k-shot:eq-eigen} can be rewritten as
	\begin{multline}
		\label{eq:seim-k-shot:eq-eigen-real-lam}
		(1+\tau\alpha)\lambda^2-\lambda
		+\tau\norm{HT_k\left(I-\frac{B^k}{\lambda}\right)^{-1}My}^2
		\\
		+\tau(\lambda-1)\scalar{M^*\left[I-\frac{(B^*)^k}{\lambda}\right]^{-1}X_k\left(I-\frac{B^k}{\lambda}\right)^{-1}My,y}=0.
	\end{multline}
	We show that we can choose $\tau$ so that the left-hand side of the above equation is positive. First, we note that
	$$\left|\scalar{M^*\left[I-\frac{(B^*)^k}{\lambda}\right]^{-1}X_k\left(I-\frac{B^k}{\lambda}\right)^{-1}My,y}\right|\le\norm{M}^2\norm{X_k}s(B^k)^2.$$
	If $\lambda>1$, we rewrite equation \eqref{eq:seim-k-shot:eq-eigen-real-lam} again as
	\begin{multline*}
		(1+\tau\alpha)\lambda(\lambda-1)+\tau\alpha+\tau\norm{HT_k\left(I-\frac{B^k}{\lambda}\right)^{-1}My}^2
		\\
		+\tau(\lambda-1)\scalar{M^*\left[I-\frac{(B^*)^k}{\lambda}\right]^{-1}X_k\left(I-\frac{B^k}{\lambda}\right)^{-1}My,y}=0.
	\end{multline*}
	Since $\lambda(\lambda-1)\ge\lambda-1$, $\norm{HT_k\left(I-\frac{B^k}{\lambda}\right)^{-1}My}^2\ge 0$ and $\tau\alpha\ge0$, we choose $\tau$ such that
	\[
	(1+\tau\alpha)(\lambda-1)-\tau(\lambda-1)\norm{M}^2\norm{X_k}s(B^k)^2>0,
	\]
	or equivalently,
	\[
	1+\tau\alpha-\tau\norm{M}^2\norm{X_k}s(B^k)>0.
	\]
	If $\lambda\le-1$, we consider equation \eqref{eq:seim-k-shot:eq-eigen-real-lam}. Since $\norm{HT_k\left(I-\frac{B^k}{\lambda}\right)^{-1}My}^2\ge 0$, we choose $\tau$ such that
	\[
	(1+\tau\alpha)\lambda^2-\lambda-\tau(1-\lambda)\norm{M}^2\norm{X_k}s(B^k)^2>0,
	\]
	or equivalently,
	\[
	\frac{(1+\tau\alpha)\lambda^2-\lambda}{1-\lambda}-\tau\norm{M}^2\norm{X_k}s(B^k)^2>0,
	\]
	Since the function $\lambda\mapsto\frac{(1+\tau\alpha)\lambda^2-\lambda}{\lambda-1}$ is decreasing on $(-\infty,-1]$, it suffices to choose $\tau$ such that
	\[
	1+\frac{\alpha}{2}\tau-\tau\norm{M}^2\norm{X_k}s(B^k)^2>0,
	\]
	which proves the first statement of the proposition. Finally, the case $\norm{B}<1$ can be deduced using Lemma \ref{lem:sTXk}.
\end{proof}

For the general case of complex eigenvalues, the study is much more complicated and technical. The following proposition summarizes the results obtained. 

\begin{proposition}[Complex eigenvalues]
	\label{prop:seim-k-shot:tau:complex-lam}
	If $0\neq \rho(B)<1$, there exists $\tau>0$ sufficiently small such that equation \eqref{eq:seim-k-shot:eq-eigen} admits no solution $\lambda\in\C\backslash\R$, $|\lambda|\ge 1$.  
	In particular, if $\norm{B}<1$, given any $\delta_0>0$ and $0<\theta_0<\frac{\pi}{4}$, one can choose
	\[
	\tau <\min_{1\le i\le 3}
	\left(\frac{\norm{H}^2\norm{M}^2}{(1-\norm{B})^2(1-\norm{B}^k)^2}\psi_i(k,\norm{B})+C_i\alpha\right)^{-1},
	\]
	where the functions $\psi_i$ are defined by
	%
	\begin{equation*}
		\begin{array}{rl}
			\psi_1(k,b)\coloneqq& 4b^{2k}+\sqrt{2}[1-kb^{k-1}+(k-1)b^k](1+b^k),
			\\[1ex]
			\psi_2(k,b)\coloneqq& \left( \frac{1}{2\sin\frac{\theta_0}{2}}(1-b^k)^2+\sqrt{2}(1-kb^{k-1}+(k-1)b^k)\right)(1+b^k)^2
			\\[2ex]
			\text{and }\psi_3(k,b)\coloneqq&\displaystyle\frac{2c\sin\frac{\theta_0}{2}}{\delta_0}b^{2k}+\frac{\sqrt{c}}{\delta_0}[1-kb^{k-1}+(k-1)b^k](1+b^{2k})\\[2ex]
			&\displaystyle+2\max\left(\frac{\sqrt{c}}{\delta_0},\frac{\sqrt{c}}{\cos2\theta_0}\right)[1-kb^{k-1}+(k-1)b^k]b^k,\\
		\end{array}
	\end{equation*}
	and the pure constant $C_i$, $1\le i\le 3$, are defined by
	\begin{equation*}
		C_1\coloneqq\sqrt{2}-1,\quad 
		C_2\coloneqq\sqrt{2}+\frac{1}{2\sin\frac{\theta_0}{2}}-1,
		\quad\text{and}\quad
		C_3\coloneqq\frac{\sqrt{c}}{\delta_0}-1
	\end{equation*}
	with
	\begin{equation*}
		c\coloneqq\frac{1+2\delta_0\sin\frac{3\theta_0}{2}+\delta_0^2}{\cos^2\frac{3\theta_0}{2}}.
	\end{equation*}
\end{proposition}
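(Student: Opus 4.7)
The plan is to follow the same strategy as the proof of Proposition~\ref{prop:seim-1-shot:tau:complex-lam} verbatim, handling two extra complications brought by the multi-step case: the resolvent factors in \eqref{eq:seim-k-shot:eq-eigen} now involve $B^k$ in place of $B$, and the middle operator $V\coloneqq(\lambda-1)X_k+T_k^*H^*HT_k$ is itself $\lambda$-dependent.

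First, I would apply Lemma~\ref{lem:decomPQ} to $B^k$ and $(B^*)^k$ (in place of $B$ and $B^*$) to write $(I-B^k/\lambda)^{-1}=P+\ic Q$ and $(I-(B^*)^k/\lambda)^{-1}=P^*+\ic Q^*$, where $P,Q$ are real matrices that commute with $B^k$ and hence with $T_k$ and $X_k$ (both polynomials in $B$). The analogues of bounds \eqref{eq:pB}--\eqref{eq:qB} give $\norm{P}\le(1-\norm{B}^k)^{-1}$, $\norm{Q}\le\norm{B}^k(1-\norm{B}^k)^{-1}$, and $\norm{Q}\le|\sin\theta|\,\norm{B}^k(1-\norm{B}^k)^{-2}$. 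Exploiting that both $T_k^*H^*HT_k$ and $X_k$ are self-adjoint positive semi-definite (Lemma~\ref{lem:ppty:TUXk}(ii)), I would then introduce the two bilinear forms
\[
G_1(X,Y)\coloneqq\scalar{M^*XT_k^*H^*HT_kYMy,y},\qquad G_2(X,Y)\coloneqq\scalar{M^*XX_kYMy,y},
\]
both satisfying $G_i(X^*,X)\ge 0$ and $G_i(X,Y)=G_i(Y^*,X^*)^*$, exactly as the form $G$ in the $k=1$ proof. Equation \eqref{eq:seim-k-shot:eq-eigen} then reads
\[
(1+\tau\alpha)\lambda^2-\lambda+\tau G_1(P^*+\ic Q^*,P+\ic Q)+\tau(\lambda-1)G_2(P^*+\ic Q^*,P+\ic Q)=0.
\]

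Splitting into real and imaginary parts produces analogues of \eqref{eq:realpart:k=1}--\eqref{eq:impart:k=1} with extra contributions from the factor $(\lambda-1)$ multiplying $G_2$. I would then form the $\gamma$-combination as in \eqref{eq:im+gamma*re:k=1} and consider the same three cases as in the proof of Proposition~\ref{prop:seim-1-shot:tau:complex-lam}, choosing $\gamma=\gamma_i$ from Lemma~\ref{lem:gamma123}. In each case the $G_1$-piece is treated verbatim as in Lemmas~\ref{lem:seim-1-shot:case1}--\ref{lem:seim-1-shot:case3}, with the substitutions $\norm{H}^2\mapsto\norm{H}^2\norm{T_k}^2$ and $\norm{B}\mapsto\norm{B}^k$ inside the $P,Q$-bounds; together with Lemma~\ref{lem:sTXk} this produces the first summand of each $\psi_i$. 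The new $G_2$-piece is bounded using $\norm{X_k}$ (Lemma~\ref{lem:sTXk}): the cross-terms $G_2(P^*,Q),G_2(Q^*,P)$ carry an extra $|\sin\theta|$ via $\norm{Q}$, while the factor $|\lambda-1|$ cancels against $|\lambda(\lambda-1)|$ in the denominator from Lemma~\ref{lem:gamma123}, leaving a $|\lambda|^{-1}\le 1$ factor; this yields the $\sqrt{2}$ (Cases 1, 2) or $\sqrt{c}/\delta_0$ (Case 3) contributions in $\psi_i$.

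The main obstacle is Case 3 ($\theta\in(-\theta_0,\theta_0)$): there the diagonal term $G_2(P^*,P)$ has no $|\sin\theta|$ smallness from the $Q$-bounds, so one must distinguish the sub-regimes of Lemma~\ref{lem:gamma123}(iii), which produce two competing denominator estimates---one giving a $\sqrt{c}/\delta_0$ factor and the other a $\sqrt{c}/\cos(2\theta_0)$ factor. This is the origin of the $\max\bigl(\sqrt{c}/\delta_0,\sqrt{c}/\cos 2\theta_0\bigr)$ appearing in $\psi_3$ but not in $\psi_1,\psi_2$. Assembling the three case-bounds and using the explicit form of $\norm{T_k}$ and $\norm{X_k}$ in Lemma~\ref{lem:sTXk} produces the prefactor $\norm{H}^2\norm{M}^2/[(1-\norm{B})^2(1-\norm{B}^k)^2]$ and the stated $\psi_i$ and $C_i$.
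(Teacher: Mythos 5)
Your outline follows the paper's proof essentially verbatim: Lemma~\ref{lem:decomPQ} applied to $B^k$, the two sesquilinear forms (the paper's $G_k$ and $L_k$), the real/imaginary split, the $\gamma$-combination, and the three cases of Lemma~\ref{lem:gamma123}, with $\norm{T_k}$ and $\norm{X_k}$ controlled by Lemma~\ref{lem:sTXk}. Two claims you make along the way are wrong, though neither is load-bearing in your final assembly. First, $X_k$ is \emph{not} positive semi-definite and not a polynomial in $B$: it equals $\sum_{l}\sum_{i+j=l}(B^*)^iH^*HB^j$, which Lemma~\ref{lem:ppty:TUXk} only shows to be self-adjoint (e.g.\ $U_2=B^*H^*H+H^*HB$ can be indefinite), so you cannot assert $G_2(X^*,X)\ge 0$ nor that $P,Q$ commute with $X_k$; the paper only ever bounds $|L_{1,k}|$ and $|L_{2,k}|$ in absolute value via $\norm{X_k}$, which is what you also end up doing. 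Second, the $\max\bigl(\sqrt{c}/\delta_0,\sqrt{c}/\cos 2\theta_0\bigr)$ in $\psi_3$ does not arise from the diagonal term $G_2(P^*,P)$ or from sub-regimes of the denominator; it is the bound on the ratio $|\gamma_3\Re(\lambda-1)-\Im(\lambda-1)|/[\Re(\lambda^2-\lambda)+\gamma_3\Im(\lambda^2-\lambda)]$ multiplying the \emph{cross}-term $L_{2,k}$, obtained in Lemma~\ref{lem:gamma123}(iii) because the relevant function of $R$ is not monotone (one takes the larger of its value at $R=1$ and its limit as $R\to\infty$); in Cases 1 and 2 the same ratio appears but $\gamma=\pm1$ makes it bounded by $\sqrt{2}$.
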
	


\begin{proof}
	\textbf{Step 1. Rewrite equation \eqref{eq:seim-k-shot:eq-eigen} by separating real and imaginary parts.}
	
	Let $\lambda=R(\cos\theta+\ic\sin\theta)$ in polar form where $R=|\lambda| \ge 1$ and $\theta\in(-\pi,\pi)$, $\theta\neq 0$. Write ${1}/{\lambda}=r(\cos\phi+\ic\sin\phi)$ in polar form where $r={1}/{|\lambda|}={1}/{R}\le 1$ and $\phi=-\theta\in(-\pi,\pi)$. 
	By Lemma \ref{lem:decomPQ} applied to $T=B^k$, we have
	$$\left(I-\frac{B^k}{\lambda}\right)^{-1}=P_k(\lambda)+\ic Q_k(\lambda),\quad \left(I-\frac{(B^*)^k}{\lambda}\right)^{-1}=P_k(\lambda)^*+\ic Q_k(\lambda)^*$$
	where $P_k(\lambda)$ and $Q_k(\lambda)$ are $\C^{n_u\times n_u}$ matrices that satisfy the following bounds in the case $\norm{B}<1$ for all $|\lambda|\ge 1$:
	\begin{gather}
		\norm{P_k(\lambda)}\le p \coloneqq\cfrac{1}{1-\norm{B}^k}, \label{eq:pBk}\\
		\norm{Q_k(\lambda)}\le q_1 \coloneqq\cfrac{\norm{B}^k}{1-\norm{B}^k} 
		\quad\mbox{and}\quad
		\norm{Q_k(\lambda)}\le q_2|\sin\theta|
		\quad\mbox{with}\quad
		q_2 \coloneqq
		\cfrac{\norm{B}^k}{(1-\norm{B}^k)^2}. \label{eq:qBk}
	\end{gather}
	These bounds still hold in the case $0\neq\rho(B)<1$ with  
	\begin{equation}
		\label{eq:pqBk:rho(B)<1}
		p\coloneqq (1+\norm{B^k})s(B^k)^2,\quad
		q_1\coloneqq \norm{B^k}s(B^k)^2,\quad
		\mbox{and}\quad
		q_2\coloneqq\cfrac{\norm{B^k}}{1-\norm{B^k}}.
	\end{equation}
	
	
	
	\noindent To simplify the notation, we will not explicitly write the dependence of $P_k$ and $Q_k$ on $\lambda$. Now we rewrite \eqref{eq:seim-k-shot:eq-eigen} as
	\label{eq:seim-k-shot:ready-split}
	\begin{multline}
		(1+\tau\alpha)(\lambda^2-\lambda)+\tau\alpha(\lambda-1)+\tau\alpha
		\\
		+\tau G_k(P^*_k+\ic Q^*_k,P_k+\ic Q_k)
		+\tau(\lambda-1)L_k(P^*_k+\ic Q^*_k,P_k+\ic Q_k)
		=0
	\end{multline}
	where 
	\begin{equation*}
		G_k(X,Y)=\scalar{M^*XT_k^*H^*HT_kYMy,y}, \forall 	X,Y\in\C^{n_u\times n_u}
	\end{equation*}
	and
	\begin{equation*}
		L_k(X,Y)=\scalar{M^*XX_kYMy,y}, \forall 	X,Y\in\C^{n_u\times n_u}.
	\end{equation*}
	
	
	\noindent Notice that $G_k$ is a bilinear form and $G_k(X,Y)=G_k(Y^*,X^*)^*$ so that $G_k(X,Y)+G_k(X^*,Y^*)$ is real. Similarly, $L_k$ has the same properties as $G_k$ (note that $X_k^*=X_k$ by Lemma \ref{lem:ppty:TUXk}). With these properties of $G_k$ and $L_k$, we expand \eqref{eq:seim-k-shot:ready-split} and take its real and imaginary parts, which yields
	\begin{multline}\label{eq:seim-k-shot:realpart}
		(1+\tau\alpha)\Re(\lambda^2-\lambda)+\tau\alpha\Re(\lambda-1)+\tau\alpha
		+\tau G_{1,k}+\tau [\Re(\lambda-1)L_{1,k}-\Im(\lambda-1)L_{2,k}]=0
	\end{multline}
	and
	\begin{equation}\label{eq:seim-k-shot:impart}
		(1+\tau\alpha)\Im(\lambda^2-\lambda)+\tau\alpha\Im(\lambda-1)
		+\tau G_{2,k}
		+\tau [\Im(\lambda-1)L_{1,k}+\Re(\lambda-1)L_{2,k}]=0
	\end{equation}
	where
	\begin{equation*}
		\begin{array}{c}
			G_{1,k}\coloneqq G_k(P^*_k,P_k)-G_k(Q^*_k,Q_k),\quad G_{2,k}\coloneqq G_k(P^*_k,Q_k)+G_k(Q^*_k,P_k),\\
			L_{1,k}\coloneqq L_k(P^*_k,P_k)-L_k(Q^*_k,Q_k)
			\quad\mbox{and}\quad L_{2,k}\coloneqq L_k(P^*_k,Q_k)+L_k(Q^*_k,P_k).
		\end{array}
	\end{equation*}
	
	\noindent\textbf{Step 2. Use a suitable combination of equations \eqref{eq:seim-k-shot:realpart} and \eqref{eq:seim-k-shot:impart}.}
	
	
	\noindent Let $\gamma\in\R$. Multiplying equation \eqref{eq:seim-k-shot:impart} with $\gamma$ then summing it with equation \eqref{eq:seim-k-shot:realpart}, we obtain:
	\begin{multline}\label{eq:seim-k-shot:re+gamma*im}
		(1+\tau\alpha)[\Re(\lambda^2-\lambda)+\gamma\Im(\lambda^2-\lambda)]+\tau\alpha[\Re(\lambda-1)+\gamma\Im(\lambda-1)]+\tau\alpha
		\\
		+\tau G_k(P^*_k+\gamma Q^*_k,P_k+\gamma Q_k)-\tau(1+\gamma^2)G_k(Q^*_k,Q_k)
		\\
		+\tau[\Re(\lambda-1)+\gamma\Im(\lambda-1)]L_{1,k}+\tau[\gamma\Re(\lambda-1)-\Im(\lambda-1)]L_{2,k} =0.
	\end{multline}
	Now we consider four cases of $\lambda$ as in the proof for $k=1$ namely:
	\begin{itemize}
		\item\textit{Case 1.} $\Re(\lambda^2-\lambda)\ge 0$;
		\item\textit{Case 2.} $\Re(\lambda^2-\lambda)<0$ and $\theta\in [\theta_0,\pi-\theta_0]\cup[-\pi+\theta_0,-\theta_0]$ for fixed $0<\theta_0<\frac{\pi}{4}$;
		\item\textit{Case 3.} $\Re(\lambda^2-\lambda)<0$ and $\theta \in (-\theta_0,\theta_0)$ for fixed $0<\theta_0<\frac{\pi}{4}$;
		\item\textit{Case 4.} $\Re(\lambda^2-\lambda)<0$ and $\theta \in (\pi-\theta_0,\pi)\cup(-\pi,-\pi+\theta_0)$ for fixed $0<\theta_0<\frac{\pi}{4}$. 
	\end{itemize}
	Cases 1, 2 and 3 are respectively treated in Lemmas \ref{lem:seim-k-shot:case1}, \ref{lem:seim-k-shot:case2} and \ref{lem:seim-k-shot:case3} below.
	Notice that case 4 corresponds to an empty set, according to Lemma \ref{lem:gamma123} (iv). The statement of the proposition easily follows from the combination of Lemmas \ref{lem:seim-k-shot:case1}, \ref{lem:seim-k-shot:case2} and \ref{lem:seim-k-shot:case3}.
\end{proof}

In the lemmas below we shall make use of the following obvious properties where $\norm{y}=1$:
\begin{equation}\label{ppty:Gk}
	0\le G_k(X^*,X)=\norm{HT_kXMy}^2\le (\norm{H}\norm{M}\norm{T_k}\norm{X})^2,\quad\forall X\in\C^{n_u\times n_u}.
\end{equation}
\begin{multline}\label{ppty:L1k}
	|L_{1,k}|=|L_k(P^*_k,P_k)-L_k(Q^*_k,Q_k)|
	\le |L_k(P^*_k,P_k)|+|L_k(Q^*_k,Q_k)|\\
	\le \norm{X_k}\norm{M}^2(\norm{P_k}^2+\norm{Q_k}^2)
	\le \norm{X_k}\norm{M}^2(p^2+q_1^2)
\end{multline}
and
\begin{multline}\label{ppty:L2k}
	|L_{2,k}|=|L_k(P^*_k,Q_k)+L_k(Q^*_k,P_k)|
	\le |L_k(P^*_k,Q_k)|+|L_k(Q^*_k,P_k)|
	\\
	\le 2\norm{X_k}\norm{M}^2\norm{P_k}\norm{Q_k}
	\le 2\norm{X_k}\norm{M}^2pq_1.
\end{multline}

	%

\begin{lemma}[Case 1]
	\label{lem:seim-k-shot:case1} Let $\rho(B)<1$ and let $|\lambda|\ge 1$ with $\Re(\lambda^2-\lambda)\ge 0$. Then equation \eqref{eq:seim-k-shot:eq-eigen} cannot hold if $\tau>0$ and
	\begin{multline*}
		\Big(4\norm{H}^2\norm{M}^2\norm{T_k}^2\norm{B^k}^2s(B^k)^4+\sqrt{2}\norm{M}^2\norm{X_k}(1+2\norm{B^k})^2s(B^k)^4
		+(\sqrt{2}-1)\alpha\Big)\tau
		\\< 1.
	\end{multline*}
	Moreover, if $\norm{B}<1$, the result is also true if $\tau>0$ and
	$$\left(\frac{\norm{H}^2\norm{M}^2}{(1-\norm{B})^2(1-\norm{B}^k)^2}\psi_1(k,\norm{B})+(\sqrt{2}-1)\alpha\right)\tau<1.$$
	where $\psi_1(k,b)\coloneqq4b^{2k}+\sqrt{2}(1-kb^{k-1}+(k-1)b^k)(1+b^k)$.
\end{lemma}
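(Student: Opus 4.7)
The plan is to mirror the argument of Lemma \ref{lem:seim-1-shot:case1} for the $k=1$ case, incorporating the two new bilinear quantities $L_{1,k}$ and $L_{2,k}$ that arise from the matrix $X_k$ in the multi-step eigenvalue equation \eqref{eq:seim-k-shot:re+gamma*im}. First I would choose $\gamma_1=\pm 1$ according to the sign of $\Im(\lambda^2-\lambda)$, so that by Lemma \ref{lem:gamma123}(i) the quantity $D\coloneqq\Re(\lambda^2-\lambda)+\gamma_1\Im(\lambda^2-\lambda)$ is strictly positive, and substitute $\gamma=\gamma_1$ into \eqref{eq:seim-k-shot:re+gamma*im}. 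Using $\gamma_1^2=1$ and discarding the nonnegative contributions $\tau G_k(P_k^*+\gamma_1 Q_k^*,P_k+\gamma_1 Q_k)\ge 0$ (by \eqref{ppty:Gk}) and $\tau\alpha\ge 0$, it suffices to show the remaining part is strictly positive; dividing by $D$, the task reduces to three uniform-in-$\lambda$ estimates.

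The first two ratios are handled exactly as in the $k=1$ case. The regularization ratio $|\Re(\lambda-1)+\gamma_1\Im(\lambda-1)|/D$ is bounded by $\sqrt{2}/|\lambda|\le\sqrt{2}$ via Lemma \ref{lem:gamma123}(i), which produces the coefficient $\sqrt{2}-1$ multiplying $\alpha$. The quadratic ratio $G_k(Q_k^*,Q_k)/D$ is bounded by combining \eqref{ppty:Gk}, the estimate $\norm{Q_k}\le|\sin\theta|q_2$ from \eqref{eq:qBk}--\eqref{eq:pqBk:rho(B)<1}, the lower bound $D\ge 2|\sin(\theta/2)|$ from Lemma \ref{lem:gamma123}(i), and the identity $|\sin\theta|^2=4\sin^2(\theta/2)\cos^2(\theta/2)$, giving $2\norm{H}^2\norm{M}^2\norm{T_k}^2 q_2^2$; the extra factor of $2$ from $1+\gamma_1^2=2$ in front of $G_k(Q_k^*,Q_k)$ then yields the $4\norm{H}^2\norm{M}^2\norm{T_k}^2\norm{B^k}^2 s(B^k)^4$ summand of the stated bound.

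The main technical ingredient, and the expected obstacle, is the bound on the $L$-ratio. I would exploit bilinearity of $L_k$ to write $L_{1,k}+iL_{2,k}=L_k(P_k^*+iQ_k^*,P_k+iQ_k)$, together with the elementary complex identity
\[
[\Re(\lambda-1)+\gamma_1\Im(\lambda-1)]L_{1,k}+[\gamma_1\Re(\lambda-1)-\Im(\lambda-1)]L_{2,k}=\Re\bigl((1-i\gamma_1)(\lambda-1)(L_{1,k}+iL_{2,k})\bigr).
\]
Since $|1-i\gamma_1|=\sqrt{2}$, and since $|L_k(X,Y)|\le\norm{M}^2\norm{X_k}\norm{X}\norm{Y}$ combined with $\norm{P_k\pm iQ_k}\le p+q_1$ gives $|L_{1,k}+iL_{2,k}|\le\norm{M}^2\norm{X_k}(p+q_1)^2$, taking moduli yields an upper bound of $\sqrt{2}|\lambda-1|\norm{M}^2\norm{X_k}(p+q_1)^2$ for the $L$-numerator. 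To divide by $D$, I would use the Case~1 observation that $\lambda(\lambda-1)=|\lambda||\lambda-1|e^{i\alpha}$ with $\cos\alpha\ge 0$, so that $D=|\lambda||\lambda-1|(\cos\alpha+|\sin\alpha|)\ge |\lambda||\lambda-1|$ since $\cos\alpha+|\sin\alpha|\ge 1$ on $\alpha\in[-\pi/2,\pi/2]$. This gives $|\lambda-1|/D\le 1/|\lambda|\le 1$, hence a uniform $L$-ratio bound of $\sqrt{2}\norm{M}^2\norm{X_k}(p+q_1)^2$; using $(p+q_1)^2=(1+2\norm{B^k})^2 s(B^k)^4$ from \eqref{eq:pqBk:rho(B)<1} matches the $\sqrt{2}\norm{M}^2\norm{X_k}(1+2\norm{B^k})^2 s(B^k)^4$ summand.

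Assembling the three estimates yields the first assertion of the lemma. The refined version when $\norm{B}<1$ is then obtained by substituting the explicit bounds for $s(B^k)$, $\norm{T_k}$ and $\norm{X_k}$ from Lemma \ref{lem:sTXk} into the general bound and rearranging into the announced form in terms of $\psi_1(k,\norm{B})$, which is routine algebra.
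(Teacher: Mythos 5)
Your proof is correct and follows essentially the same route as the paper's: the same choice of $\gamma_1$, the same elimination of the nonnegative terms $\tau\alpha$ and $\tau G_k(P_k^*+\gamma_1Q_k^*,P_k+\gamma_1Q_k)$, division by $\Re(\lambda^2-\lambda)+\gamma_1\Im(\lambda^2-\lambda)\ge|\lambda(\lambda-1)|$, and the same three uniform-in-$\lambda$ estimates. The only difference is cosmetic: the paper bounds the two $L$-terms separately via $|L_{1,k}|\le\norm{X_k}\norm{M}^2(p^2+q_1^2)$ and $|L_{2,k}|\le2\norm{X_k}\norm{M}^2pq_1$ with each of the two accompanying ratios bounded by $\sqrt{2}$, whereas you package them as $\Re\bigl((1-\ic\gamma_1)(\lambda-1)(L_{1,k}+\ic L_{2,k})\bigr)$ and bound the modulus, which yields the identical total $\sqrt{2}\norm{M}^2\norm{X_k}(p+q_1)^2$.
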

\begin{proof}
	Define $$\gamma_1=\gamma_1(\lambda)\coloneqq\left\{\begin{array}{cc}
		1 &\text{if } \Im (\lambda^2-\lambda)\ge 0,\\
		-1 &\text{if } \Im (\lambda^2-\lambda)<0
	\end{array}\right.$$
	as in Lemma \ref{lem:gamma123}. Writing \eqref{eq:seim-k-shot:re+gamma*im} for $\gamma=\gamma_1$ as in Lemma \ref{lem:gamma123} and using $\gamma_1^2=1$ we obtain
	\begin{multline}\label{eq:seim-k-shot:case1}
		(1+\tau\alpha)[\Re(\lambda^2-\lambda)+\gamma_1\Im(\lambda^2-\lambda)]+\tau\alpha[\Re(\lambda-1)+\gamma_1\Im(\lambda-1)] +\tau\alpha
		\\
		+\tau G_k(P^*_k+\gamma_1 Q^*_k,P_k+\gamma_1 Q_k)-2\tau G_k(Q^*_k,Q_k)\\
		+\tau[\Re(\lambda-1)+\gamma_1\Im(\lambda-1)]L_{1,k}+\tau[\gamma_1\Re(\lambda-1)-\Im(\lambda-1)]L_{2,k}=0.
	\end{multline}
	Since $G_k(P^*_k+\gamma_1 Q^*_k,P_k+\gamma_1 Q_k)\ge 0$ by \eqref{ppty:Gk} and $\tau\alpha\ge 0$, the left-hand side of \eqref{eq:seim-k-shot:case1} is positive if $\tau$ satisfies
	\begin{multline*}
		(1+\tau\alpha)[\Re(\lambda^2-\lambda)+\gamma_1\Im(\lambda^2-\lambda)]-\tau\alpha[\Re(\lambda-1)+\gamma_1\Im(\lambda-1)]
		-2\tau G_k(Q^*_k,Q_k)
		\\
		-\tau[\Re(\lambda-1)+\gamma_1\Im(\lambda-1)]|L_{1,k}|-\tau[\gamma_1\Re(\lambda-1)-\Im(\lambda-1)]|L_{2,k}|>0,
	\end{multline*}
	or equivalently, 
	\begin{multline}\label{ineq:seim-k-shot:case1}
		1+\tau\alpha
		-\tau\alpha\frac{|\Re(\lambda-1)+\gamma_1\Im(\lambda-1)|}{\Re(\lambda^2-\lambda)+\gamma_1\Im(\lambda^2-\lambda)}
		-2\tau \frac{G_k(Q^*_k,Q_k)}{\Re(\lambda^2-\lambda)+\gamma_1\Im(\lambda^2-\lambda)}
		\\
		-\tau\frac{|\Re(\lambda-1)+\gamma_1\Im(\lambda-1)|}{\Re(\lambda^2-\lambda)+\gamma_1\Im(\lambda^2-\lambda)}|L_{1,k}|
		-\tau\frac{|\gamma_1\Re(\lambda-1)-\Im(\lambda-1)|}{\Re(\lambda^2-\lambda)+\gamma_1\Im(\lambda^2-\lambda)}|L_{2,k}| >0.
	\end{multline}
	Notice that the choice of $\gamma_1$ ensures $\Re(\lambda^2-\lambda)+\gamma_1\Im(\lambda^2-\lambda)>0$. 
	In the following we derive upper bounds independent of $\lambda$ for the terms appearing with the negative sign in \eqref{ineq:seim-k-shot:case1}.
	By Lemma \ref{lem:gamma123}  we have
	$$\frac{|\Re(\lambda-1)+\gamma_1\Im(\lambda-1)|}{\Re(\lambda^2-\lambda)+\gamma_1\Im(\lambda^2-\lambda)}
	\le\frac{\sqrt{1+\gamma_1^2}|\lambda-1|}{|\lambda(\lambda-1)|}
	=\frac{\sqrt{2}}{|\lambda|}\le\sqrt{2}$$
	and
	$$\frac{|\gamma_1\Re(\lambda-1)-\Im(\lambda-1)|}{\Re(\lambda^2-\lambda)+\gamma_1\Im(\lambda^2-\lambda)}\le\frac{\sqrt{1+\gamma_1^2}|\lambda-1|}{|\lambda(\lambda-1)|}
	=\frac{\sqrt{2}}{|\lambda|}\le\sqrt{2}.$$
	Using again Lemma \ref{lem:gamma123}  and \eqref{ppty:L2k}, we have
	\begin{multline*}
		\cfrac{G_k(Q^*_k,Q_k)}{\Re(\lambda^2-\lambda)+\gamma_1\Im(\lambda^2-\lambda)}
		\le\cfrac{(\norm{H}\norm{M}|\norm{T_k}\sin\theta|q_2)^2}{2|\sin(\theta/2)|}
		=2\left|\sin\frac{\theta}{2}\right|\cos^2\frac{\theta}{2}\norm{H}^2\norm{M}^2\norm{T_k}^2q_2^2
		\\
		\le2\norm{H}^2\norm{M}^2\norm{T_k}^2q_2^2.
	\end{multline*}
	Recall from \eqref{ppty:L1k} and \eqref{ppty:L2k} that
	$$|L_{1,k}|\le\norm{X_k}\norm{M}^2(p^2+q_1^2),\quad |L_{2,k}|\le2\norm{X_k}\norm{M}^2pq_1.$$
	Inserting these previous inequalities in \eqref{ineq:seim-k-shot:case1} gives the desired result thanks to expressions \eqref{eq:pBk}, \eqref{eq:qBk} and \eqref{eq:pqBk:rho(B)<1} of respectively $p,q_1$ and $q_2$, and thanks to Lemma \ref{lem:sTXk}.
\end{proof}


\begin{lemma}[Case 2]
	\label{lem:seim-k-shot:case2} Let $\rho(B)<1$ and let $|\lambda|\ge1$, $\Re(\lambda^2-\lambda)<0$, $\theta\in [\theta_0,\pi-\theta_0]\cup[-\pi+\theta_0,-\theta_0]$ for given $0<\theta_0\le\frac{\pi}{4}$. Then equation \eqref{eq:seim-k-shot:eq-eigen} cannot hold if $\tau>0$ and
	\begin{equation*}
		\Big(\Big(\frac{1}{2\sin\frac{\theta_0}{2}}\norm{H}^2\norm{M}^2\norm{T_k}^2+\sqrt{2}\norm{M}^2\norm{X_k}\Big)(1+2\norm{B^k})^2s(B^k)^4+(\sqrt{2}-1)\alpha\Big)\tau 
		<1.
	\end{equation*}
	Moreover, if $\norm{B}<1$, the result is also true if
	$$\tau<\left(\frac{\norm{H}^2\norm{M}^2}{(1-\norm{B})^2(1-\norm{B}^k)^2}\psi_2(k,\norm{B})+(\sqrt{2}-1)\alpha\right)^{-1}$$
	where $\psi_2(k,b)\coloneqq\left( \frac{1}{2\sin\frac{\theta_0}{2}}(1-b^k)^2+\sqrt{2}(1-kb^{k-1}+(k-1)b^k)\right)(1+b^k)^2$.
\end{lemma}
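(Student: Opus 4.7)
The plan is to follow closely the template of Lemma \ref{lem:seim-k-shot:case1} (Case 1 for $k \ge 1$) but swap in the alternative choice of sign parameter used in Lemma \ref{lem:seim-1-shot:case2} for the one-step Case 2. The multi-step bookkeeping (presence of $G_k$, $L_{1,k}$, $L_{2,k}$, and the matrices $T_k$, $X_k$) is already in place from Lemma \ref{lem:seim-k-shot:case1}; only the choice of $\gamma$ and the relevant quotient bounds from Lemma \ref{lem:gamma123} need to change.

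First I would set
\[
\gamma_2 = \gamma_2(\lambda) \coloneqq \begin{cases} -1 & \text{if } \Im(\lambda^2-\lambda)\ge 0,\\ 1 & \text{if } \Im(\lambda^2-\lambda)<0, \end{cases}
\]
as in Lemma \ref{lem:gamma123}(ii), so that $\Re(\lambda^2-\lambda)+\gamma_2\Im(\lambda^2-\lambda)<0$ and $\gamma_2^2=1$. Substituting $\gamma=\gamma_2$ into the master identity \eqref{eq:seim-k-shot:re+gamma*im} produces the analogue of \eqref{eq:seim-k-shot:case1} in which the term $-2\tau G_k(Q_k^*,Q_k)$ is nonpositive by \eqref{ppty:Gk}; dropping it yields a sufficient condition for the left-hand side to be strictly negative---which would contradict \eqref{eq:seim-k-shot:eq-eigen}. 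Dividing through by $|\Re(\lambda^2-\lambda)+\gamma_2\Im(\lambda^2-\lambda)|>0$ reduces the task to showing that the sum of all positive contributions on the other side is smaller than $1+\tau\alpha$.

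Then I would estimate each quotient by quantities independent of $\lambda$, using Lemma \ref{lem:gamma123}(ii): both of $|\Re(\lambda-1)+\gamma_2\Im(\lambda-1)|/|\Re(\lambda^2-\lambda)+\gamma_2\Im(\lambda^2-\lambda)|$ and $|\gamma_2\Re(\lambda-1)-\Im(\lambda-1)|/|\Re(\lambda^2-\lambda)+\gamma_2\Im(\lambda^2-\lambda)|$ are bounded by $\sqrt{2}$, while $1/|\Re(\lambda^2-\lambda)+\gamma_2\Im(\lambda^2-\lambda)|$ is bounded by $1/(2\sin(\theta_0/2))$. The $G_k(P_k^*+\gamma_2 Q_k^*,P_k+\gamma_2 Q_k)$ term is controlled via \eqref{ppty:Gk} using $\norm{P_k+\gamma_2 Q_k} \le p+q_1$, the $L_{1,k}, L_{2,k}$ terms via \eqref{ppty:L1k}, \eqref{ppty:L2k}, and the operator norms $\norm{T_k}$, $\norm{X_k}$ via Lemma \ref{lem:sTXk} (with $p, q_1$ from \eqref{eq:qBk}, \eqref{eq:pqBk:rho(B)<1}).

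The main obstacle will be assembling these estimates into the polynomial $\psi_2(k,\norm{B})$ exactly as stated. The key algebraic simplification is the identity $\sqrt{2}(p^2+q_1^2)+2\sqrt{2}pq_1 = \sqrt{2}(p+q_1)^2$, which merges the $L_{1,k}$ and $L_{2,k}$ contributions into a single $(p+q_1)^2$ factor alongside the $G_k$ contribution (which also comes with $(p+q_1)^2$). In the $\norm{B}<1$ setting, substituting $p+q_1=(1+\norm{B}^k)/(1-\norm{B}^k)$ and $\norm{T_k} \le (1-\norm{B}^k)/(1-\norm{B})$ produces a cancellation that leaves the $(1-\norm{B}^k)^2$ factor appearing inside the bracket of $\psi_2(k,\norm{B})$, while the $\norm{X_k}$ term is bounded by Lemma \ref{lem:sTXk} to yield the $(1-k\norm{B}^{k-1}+(k-1)\norm{B}^k)$ factor; the general $\rho(B)<1$ bound follows by using instead $p+q_1=(1+2\norm{B^k})s(B^k)^2$ from \eqref{eq:pqBk:rho(B)<1}.
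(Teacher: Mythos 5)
Your proposal is correct and follows essentially the same route as the paper's proof: the same choice of $\gamma_2$, the same reduction via \eqref{eq:seim-k-shot:re+gamma*im} after discarding the nonpositive term $-2\tau G_k(Q_k^*,Q_k)$, the same quotient bounds from Lemma~\ref{lem:gamma123}(ii), and the same merging $\sqrt{2}(p^2+q_1^2)+2\sqrt{2}\,pq_1=\sqrt{2}(p+q_1)^2$ of the $L_{1,k}$ and $L_{2,k}$ contributions, which reproduces $\psi_2$ exactly. Note only that this argument (like the paper's) actually yields the $\alpha$-coefficient $\sqrt{2}+\frac{1}{2\sin\frac{\theta_0}{2}}-1$ as in Proposition~\ref{prop:seim-k-shot:tau:complex-lam}, rather than the $\sqrt{2}-1$ appearing in the lemma's displayed condition.
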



\begin{proof}
	Define 
	$$\gamma_2=\gamma_2(\lambda)\coloneqq\left\{\begin{array}{cc}
		-1 &\text{if } \Im (\lambda^2-\lambda)\ge 0,\\
		1 &\text{if } \Im (\lambda^2-\lambda)<0
	\end{array}\right.$$ 
	as in Lemma \ref{lem:gamma123} (ii). 
	Writing \eqref{eq:seim-k-shot:re+gamma*im} for $\gamma=\gamma_2$ as in Lemma \ref{lem:gamma123} (ii) and using $\gamma_2^2=1$, we obtain
	\begin{multline}\label{eq:seim-k-shot:case2}
		(1+\tau\alpha)[\Re(\lambda^2-\lambda)+\gamma_2\Im(\lambda^2-\lambda)]+\tau\alpha[\Re(\lambda-1)+\gamma_2\Im(\lambda-1)]+\tau\alpha
		\\
		+\tau G_k(P^*_k+\gamma_2 Q^*_k,P_k+\gamma_2 Q_k)-2\tau G_k(Q^*_k,Q_k)
		\\
		+\tau[\Re(\lambda-1)+\gamma_2\Im(\lambda-1)]L_{1,k}+\tau[\gamma_2\Re(\lambda-1)-\Im(\lambda-1)]L_{2,k} =0.
	\end{multline}
	Since $G_k(Q^*_k,Q_k)\ge 0$ by \eqref{ppty:Gk}, the left-hand side of \eqref{eq:seim-k-shot:case2} is negative if $\tau$ satisfies
	\begin{multline*}
		(1+\tau\alpha)[\Re(\lambda^2-\lambda)+\gamma_2\Im(\lambda^2-\lambda)]+\tau\alpha|\Re(\lambda-1)+\gamma_2\Im(\lambda-1)|
		+\tau\alpha
		\\
		+\tau G_k(P^*_k+\gamma_2 Q^*_k,P_k+\gamma_2 Q_k)
		\\
		+\tau|\Re(\lambda-1)+\gamma_1\Im(\lambda-1)||L_{1,k}|+\tau|\gamma_1\Re(\lambda-1)-\Im(\lambda-1)||L_{2,k}| <0.
	\end{multline*}
	or equivalently, 
	\begin{multline}
		\label{ineq:seim-k-shot:case2}
		-1-\tau\alpha
		+\tau\alpha\frac{|\Re(\lambda-1)+\gamma_2\Im(\lambda-1)|}{|\Re(\lambda^2-\lambda)+\gamma_2\Im(\lambda^2-\lambda)|}
		+\frac{\tau\alpha}{|\Re(\lambda^2-\lambda)+\gamma_2\Im(\lambda^2-\lambda)|}
		\\
		+\tau\frac{G_k(P^*_k+\gamma_2 Q^*_k,P_k+\gamma_2 Q_k)}{|\Re(\lambda^2-\lambda)+\gamma_2\Im(\lambda^2-\lambda)|}
		+\tau\frac{|\Re(\lambda-1)+\gamma_2\Im(\lambda-1)|}{|\Re(\lambda^2-\lambda)+\gamma_2\Im(\lambda^2-\lambda)|}|L_{1,k}|
		\\
		+\tau\frac{|\gamma_2\Re(\lambda-1)-\Im(\lambda-1)|}{|\Re(\lambda^2-\lambda)+\gamma_2\Im(\lambda^2-\lambda)|}|L_{2,k}| <0.
	\end{multline}
	Notice that the choice of $\gamma_2$ ensures $\Re(\lambda^2-\lambda)+\gamma_2\Im(\lambda^2-\lambda)<0$. In the following we derive upper bounds independent of $\lambda$ for the terms appearing with the positive sign in \eqref{ineq:seim-k-shot:case2}. 
	By Lemma \ref{lem:gamma123} (ii) we have
	\begin{align*}
		\frac{|\Re(\lambda-1)+\gamma_2\Im(\lambda-1)|}{|\Re(\lambda^2-\lambda)+\gamma_2\Im(\lambda^2-\lambda)|}
		\le\frac{\sqrt{1+\gamma_2^2}|\lambda-1|}{|\lambda(\lambda-1)|}
		=\frac{\sqrt{2}}{|\lambda|}\le\sqrt{2},\\
		\frac{|\gamma_2\Re(\lambda-1)-\Im(\lambda-1)|}{|\Re(\lambda^2-\lambda)+\gamma_2\Im(\lambda^2-\lambda)|}\le\frac{\sqrt{1+\gamma_2^2}|\lambda-1|}{|\lambda(\lambda-1)|}
		=\frac{\sqrt{2}}{|\lambda|}\le\sqrt{2},
	\end{align*}
	and
	$$\frac{1}{|\Re(\lambda^2-\lambda)+\gamma_2\Im(\lambda^2-\lambda)|}\le\frac{1}{2\sin\frac{\theta_0}{2}}.$$
	Using again Lemma \ref{lem:gamma123} (ii) and  \eqref{ppty:Gk}, we have
	$$\frac{G_k(P^*_k+\gamma_2 Q^*_k,P_k+\gamma_2 Q_k)}{|\Re(\lambda^2-\lambda)+\gamma_2\Im(\lambda^2-\lambda)|}
	\le\frac{\norm{H}^2\norm{M}^2\norm{T_k}^2(p+q_1)^2}{2\sin\frac{\theta_0}{2}}.$$
	Recall from \eqref{ppty:L1k} and \eqref{ppty:L2k} that
	$$|L_{1,k}|\le\norm{X_k}\norm{M}^2(p^2+q_1^2),\quad |L_{2,k}|\le2\norm{X_k}\norm{M}^2pq_1.$$
	Inserting these previous inequalities in \eqref{ineq:seim-k-shot:case2} gives the desired result thanks to expressions \eqref{eq:pBk}, \eqref{eq:qBk} and \eqref{eq:pqBk:rho(B)<1} of respectively $p$ and $q_1$, and thanks to Lemma \ref{lem:sTXk}.
\end{proof}	


\begin{lemma}[Case 3]
	\label{lem:seim-k-shot:case3}
	Let $\rho(B)<1$ and let $|\lambda|\ge 1$, $\Re(\lambda^2-\lambda)<0$, $\theta \in (-\theta_0,\theta_0)$ for given $0<\theta_0\le \frac{\pi}{4}$. For any $\delta_0>0$, equation  \eqref{eq:seim-k-shot:eq-eigen} cannot hold if $\tau>0$ and
	\begin{multline*}
		\Big(\Big[\frac{2c\sin\frac{\theta_0}{2} }{\delta_0}\norm{H}^2\norm{M}^2\norm{T_k}^2\norm{B^k}^2+\frac{\sqrt{c}}{\delta_0}\norm{M}^2\norm{X_k}(1+2\norm{B^k}+2\norm{B^k}^2)
		\\
		+2\max\Big(\frac{\sqrt{c}}{\delta_0},\frac{\sqrt{c}}{\cos2\theta_0}\Big)\norm{M}^2\norm{X_k}(\norm{B^k}+\norm{B^k}^2)\Big]s(B^k)^2+(\sqrt{2}-1)\alpha\Big)\tau<1
	\end{multline*}
	where $c=c(\theta_0,\delta_0)\coloneqq \left(1+2\delta_0\sin\frac{3\theta_0}{2}+\delta_0^2\right)/\cos^2\frac{3\theta_0}{2}$.
	Moreover, if $\norm{B}<1$, the result is also true if $\tau>0$ and
	$$\Big(\frac{\norm{H}^2\norm{M}^2}{(1-\norm{B})^2(1-\norm{B}^k)^2}\psi_3(k,\norm{B})+\Big(\frac{\sqrt{c}}{\delta_0}-1\Big)\alpha\Big)\tau<1$$
	where 
	\begin{equation*}
			\psi_3(k,b)\coloneqq\displaystyle\frac{2c\sin\frac{\theta_0}{2}}{\delta_0}b^{2k}
			+\Big(\frac{\sqrt{c}}{\delta_0}(1+b^{2k})+2\max\Big(\frac{\sqrt{c}}{\delta_0},\frac{\sqrt{c}}{\cos2\theta_0}\Big)b^k\Big)
			(1-kb^{k-1}+(k-1)b^k)b^k.
	\end{equation*}
\end{lemma}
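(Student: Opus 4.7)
The plan is to mirror the structure of Lemma \ref{lem:seim-1-shot:case3} (the case $k=1$) and the multi-step analogues Lemmas \ref{lem:seim-k-shot:case1} and \ref{lem:seim-k-shot:case2}, while paying special attention to the two new terms coming from the bilinear form $L_k$ (which did not appear in the $k=1$ analysis since $X_1=0$). I would define
\[
\gamma_3=\gamma_3(\operatorname{sign}(\theta))\coloneqq
\begin{cases}
(\delta_0+\sin\frac{3\theta_0}{2})/\cos\frac{3\theta_0}{2} & \text{if } \theta>0,\\
-(\delta_0+\sin\frac{3\theta_0}{2})/\cos\frac{3\theta_0}{2} & \text{if } \theta<0,
\end{cases}
\]
exactly as in Lemma \ref{lem:gamma123}(iii), and then specialize the master identity \eqref{eq:seim-k-shot:re+gamma*im} to $\gamma=\gamma_3$. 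By construction, $1+\gamma_3^2=c$ and the denominator $\Re(\lambda^2-\lambda)+\gamma_3\Im(\lambda^2-\lambda)$ is strictly positive thanks to Lemma \ref{lem:gamma123}(iii).

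The second step is to drop the nonnegative contributions. Since $G_k(P_k^*+\gamma_3 Q_k^*,P_k+\gamma_3 Q_k)\ge 0$ by \eqref{ppty:Gk} and $\tau\alpha\ge 0$, the left-hand side of \eqref{eq:seim-k-shot:re+gamma*im} is strictly positive as soon as
\begin{multline*}
(1+\tau\alpha)[\Re(\lambda^2-\lambda)+\gamma_3\Im(\lambda^2-\lambda)]
-\tau\alpha|\Re(\lambda-1)+\gamma_3\Im(\lambda-1)|\\
-\tau c\, G_k(Q_k^*,Q_k)
-\tau|\Re(\lambda-1)+\gamma_3\Im(\lambda-1)|\,|L_{1,k}|
-\tau|\gamma_3\Re(\lambda-1)-\Im(\lambda-1)|\,|L_{2,k}|>0,
\end{multline*}
and dividing by $\Re(\lambda^2-\lambda)+\gamma_3\Im(\lambda^2-\lambda)>0$ I would reduce the problem to bounding, uniformly in $\lambda$, the four ratios that appear with a negative sign. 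The first three ratios are treated exactly as in Lemma \ref{lem:seim-1-shot:case3}: by Lemma \ref{lem:gamma123}(iii) and Cauchy--Schwarz one has $|\Re(\lambda-1)+\gamma_3\Im(\lambda-1)|/[\Re(\lambda^2-\lambda)+\gamma_3\Im(\lambda^2-\lambda)]\le \sqrt{c}/\delta_0$, and $G_k(Q_k^*,Q_k)/[\Re(\lambda^2-\lambda)+\gamma_3\Im(\lambda^2-\lambda)]\le \frac{2\sin\frac{\theta_0}{2}}{\delta_0}\norm{H}^2\norm{M}^2\norm{T_k}^2 q_2^2$ using the bound $\norm{Q_k}\le q_2|\sin\theta|$ and the identity $2|\sin(\theta/2)\cos(\theta/2)|=|\sin\theta|$.

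The decisive ingredient, and the main obstacle, is the fourth ratio $|\gamma_3\Re(\lambda-1)-\Im(\lambda-1)|/[\Re(\lambda^2-\lambda)+\gamma_3\Im(\lambda^2-\lambda)]$: the numerator is the \emph{rotated} combination (rather than the aligned one), so a direct $\sqrt{1+\gamma_3^2}\,|\lambda-1|$ bound combined with $|\lambda(\lambda-1)|\ge \delta_0|\lambda-1|$ produces a constant that degenerates as $\theta_0\to\frac{\pi}{4}$. I expect that Lemma \ref{lem:gamma123}(iii) in fact provides a second, independent bound for this rotated combination of the form $\sqrt{c}/\cos 2\theta_0$; this is precisely why the $\max(\sqrt{c}/\delta_0,\sqrt{c}/\cos 2\theta_0)$ appears in the statement. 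Taking whichever of the two bounds is tighter for the fourth ratio, and feeding in $|L_{1,k}|\le\norm{X_k}\norm{M}^2(p^2+q_1^2)$ and $|L_{2,k}|\le 2\norm{X_k}\norm{M}^2 pq_1$ from \eqref{ppty:L1k}--\eqref{ppty:L2k}, I obtain a sufficient condition on $\tau$ expressed in terms of $p$, $q_1$, $q_2$, $\norm{T_k}$ and $\norm{X_k}$.

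Finally, the first stated bound follows by substituting the definitions \eqref{eq:pqBk:rho(B)<1} of $p,q_1,q_2$ valid under $\rho(B)<1$; the second (more explicit) bound is obtained by specializing further to $\norm{B}<1$ using \eqref{eq:pBk}--\eqref{eq:qBk} together with the estimates for $\norm{T_k}$ and $\norm{X_k}$ in Lemma \ref{lem:sTXk}, which produce the factors $(1-\norm{B})^{-2}(1-\norm{B}^k)^{-2}$ and $(1-k\norm{B}^{k-1}+(k-1)\norm{B}^k)$ assembled into the polynomial $\psi_3(k,\norm{B})$ advertised in the statement.
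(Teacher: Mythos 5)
Your proposal follows essentially the same route as the paper's proof: specialize \eqref{eq:seim-k-shot:re+gamma*im} to $\gamma=\gamma_3$, drop the nonnegative terms $\tau\alpha$ and $G_k(P_k^*+\gamma_3Q_k^*,P_k+\gamma_3Q_k)$, divide by $\Re(\lambda^2-\lambda)+\gamma_3\Im(\lambda^2-\lambda)>0$, bound the four remaining ratios via Lemma \ref{lem:gamma123}(iii) together with \eqref{ppty:Gk}, \eqref{ppty:L1k}, \eqref{ppty:L2k}, and substitute the expressions for $p,q_1,q_2$ and Lemma \ref{lem:sTXk}. One point to correct: for the rotated combination $|\gamma_3\Re(\lambda-1)-\Im(\lambda-1)|/[\Re(\lambda^2-\lambda)+\gamma_3\Im(\lambda^2-\lambda)]$ you speak of ``taking whichever of the two bounds is tighter,'' i.e.\ a minimum, but neither $\sqrt{c}/\delta_0$ nor $\sqrt{c}/\cos 2\theta_0$ is individually a valid uniform bound; the relevant function of $R=|\lambda|$ decreases and then increases, so its supremum over $R\ge 1$ is attained either at $R=1$ or as $R\to\infty$, which is exactly why Lemma \ref{lem:gamma123}(iii) delivers the \emph{maximum} of the two quantities, and that maximum is what must be fed into the final condition on $\tau$ (as indeed appears in $\psi_3$).
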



\begin{proof}
	Define $$\gamma_3=\gamma_3(\mathrm{sign}(\theta))\coloneqq\left\{\begin{array}{cc}
		\left(\delta_0+\sin\frac{3\theta_0}{2}\right)/\cos\frac{3\theta_0}{2} & \text{if }\theta>0,\\
		-\left(\delta_0+\sin\frac{3\theta_0}{2}\right)/\cos\frac{3\theta_0}{2} & \text{if }\theta<0
	\end{array}\right.$$ 
	as in Lemma \ref{lem:gamma123} (iii).
	Writing \eqref{eq:seim-k-shot:re+gamma*im} for $\gamma=\gamma_3$ we obtain
	\begin{multline}\label{eq:seim-k-shot:case3}
		(1+\tau\alpha)[\Re(\lambda^2-\lambda)+\gamma_3\Im(\lambda^2-\lambda)]+\tau\alpha[\Re(\lambda-1)+\gamma_3\Im(\lambda-1)]
		+\tau\alpha
		\\
		+\tau G_k(P^*_k+\gamma_3 Q^*_k,P_k+\gamma_3 Q_k)-(1+\gamma_3^2)\tau G_k(Q^*_k,Q_k)
		\\
		+\tau\left([\Re(\lambda-1)+\gamma_3\Im(\lambda-1)]L_{1,k}+[\gamma_3\Re(\lambda-1)-\Im(\lambda-1)]L_{2,k}\right)  =0.
	\end{multline}
	Since $G(P^*+\gamma_3 Q^*,P+\gamma_3 Q)\ge 0$ and $\tau\alpha\ge 0$, the left-hand side of \eqref{eq:seim-k-shot:case3} is positive if $\tau$ satisfies
	\begin{multline*}
		(1+\tau\alpha)[\Re(\lambda^2-\lambda)+\gamma_3\Im(\lambda^2-\lambda)]-\tau\alpha|\Re(\lambda-1)+\gamma_3\Im(\lambda-1)|-(1+\gamma_3^2)\tau G_k(Q^*_k,Q_k)
		\\
		-\tau|\Re(\lambda-1)+\gamma_3\Im(\lambda-1)||L_{1,k}|+|\gamma_3\Re(\lambda-1)-\Im(\lambda-1)||L_{2,k}|>0,
	\end{multline*}
	or equivalently, 
	\begin{multline}
		\label{ineq:seim-k-shot:case3}
		1+\tau\alpha-\tau\alpha\frac{|\Re(\lambda-1)+\gamma_3\Im(\lambda-1)|}{\Re(\lambda^2-\lambda)+\gamma_3\Im(\lambda^2-\lambda)}
		-\tau(1+\gamma_3^2) \frac{G_k(Q^*_k,Q_k)}{\Re(\lambda^2-\lambda)+\gamma_3\Im(\lambda^2-\lambda)}
		\\
		-\tau\frac{|\Re(\lambda-1)+\gamma_3\Im(\lambda-1)|}{\Re(\lambda^2-\lambda)+\gamma_3\Im(\lambda^2-\lambda)}|L_{1,k}|
		-\tau\frac{|\gamma_3\Re(\lambda-1)-\Im(\lambda-1)|}{\Re(\lambda^2-\lambda)+\gamma_3\Im(\lambda^2-\lambda)}|L_{2,k}| >0.
	\end{multline}
	Notice that the choice of $\gamma_3$ ensures $\Re(\lambda^2-\lambda)+\gamma_3\Im(\lambda^2-\lambda)>0$, also 
	$$
	1+\gamma_3^2=1+\frac{\left(\delta_0+\sin\frac{3\theta_0}{2}\right)^2}{\cos^2\frac{3\theta_0}{2}}=\frac{1+2\delta_0\sin\frac{3\theta_0}{2}+\delta_0^2}{\cos^2\frac{3\theta_0}{2}}=:c
	$$ 
	is a constant greater than $\delta_0^2$. 
	In the following we derive upper bounds independent of $\lambda$ for the terms appearing with the negative sign in \eqref{ineq:seim-k-shot:case3}.
	By Lemma \ref{lem:gamma123} (iii) we have
	$$\frac{|\Re(\lambda-1)+\gamma_3\Im(\lambda-1)|}{\Re(\lambda^2-\lambda)+\gamma_3\Im(\lambda^2-\lambda)}
	\le\frac{\sqrt{1+\gamma_3^2}}{\delta_0}=\frac{\sqrt{c}}{\delta_0}
	$$
	and
	$$\frac{|\gamma_3\Re(\lambda-1)-\Im(\lambda-1)|}{\Re(\lambda^2-\lambda)+\gamma_3\Im(\lambda^2-\lambda)}\le\max\left(\frac{\sqrt{1+\gamma_3^2}}{\delta_0},\frac{\sqrt{1+\gamma_3^2}}{\cos2\theta_0}\right)
	=
	\max\left(\frac{\sqrt{c}}{\delta_0},\frac{\sqrt{c}}{\cos2\theta_0}\right).$$
	Using again Lemma \ref{lem:gamma123} (iii) and \eqref{ppty:Gk}, we have
	\begin{multline*}
		\frac{G_k(Q^*_k,Q_k)}{\Re(\lambda^2-\lambda)+\gamma_3\Im(\lambda^2-\lambda)}
		\le\frac{(\norm{H}\norm{M}|\norm{T_k}\sin\theta|q_2)^2}{2\delta_0|\sin(\theta/2)|}
		=\frac{2}{\delta_0}\left|\sin\frac{\theta}{2}\right|\cos^2\frac{\theta}{2}\norm{H}^2\norm{M}^2\norm{T_k}^2q_2^2
		\\
		\le\frac{2}{\delta_0}\norm{H}^2\norm{M}^2\norm{T_k}^2q_2^2.
	\end{multline*}
	Recall from \eqref{ppty:L1k} and \eqref{ppty:L2k} that
	$$|L_{1,k}|\le\norm{X_k}\norm{M}^2(p^2+q_1^2),\quad |L_{2,k}|\le2\norm{X_k}\norm{M}^2pq_1.$$
	Inserting these previous inequalities in \eqref{ineq:seim-k-shot:case3} gives the desired result thanks to expressions \eqref{eq:pBk}, \eqref{eq:qBk} and \eqref{eq:pqBk:rho(B)<1} of respectively $p,q_1$ and $q_2$, and thanks to Lemma \ref{lem:sTXk}.
\end{proof}

		
		\subsection{Final result ($k\ge 1$)}
		Considering
		Proposition~\ref{prop:seim-k-shot:tau:real-lam} (for real eigenvalues) and taking the bound in Proposition~\ref{prop:seim-k-shot:tau:complex-lam} (for complex eigenvalues), we finally obtain a sufficient condition on the descent step $\tau$ to ensure convergence of the multi-step one-shot method. 
		
		\begin{theorem}[Convergence of semi-implicit $k$-step one-shot, $k\ge 1$]
			\label{th:seim-k-shot:tau:all}
			Under assumption \eqref{hypo}, the $k$-step one-shot method \eqref{alg:seim-k-shot}, $k\ge 1$, converges for sufficiently small $\tau$. In particular, for $\norm{B}<1$, there exists an explicit piecewise (at most a $(4k)$th order) polynomial function $\mathcal{P}_k$ and a pure constant $C>0$ independent of $k$ such that $\mathcal{P}_k>0$ on $[0,1)$ and it is enough to take $\tau>0$ and
			$$\left(\frac{\norm{H}^2\norm{M}^2}{(1-\norm{B})^2(1-\norm{B}^k)^2}\mathcal{P}_k(\norm{B})+C\alpha\right)\tau<1.$$
		\end{theorem}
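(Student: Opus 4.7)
The plan is to glue together the eigenvalue equation of Proposition \ref{prop:seim-k-shot:eq-eigen} with the case-by-case exclusions established in Propositions \ref{prop:seim-k-shot:tau:real-lam} and \ref{prop:seim-k-shot:tau:complex-lam}. First I would recall that \eqref{alg:seim-k-shot} is a fixed-point iteration on $(\sigma^n,u^n,p^n)$ with block matrix \eqref{seim-k-shot:itermat}, so it converges to the regularized solution, for every initial guess, if and only if this matrix has spectral radius strictly less than $1$.

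The body of the argument is by contradiction. Suppose $\lambda\in\mathbb{C}$ with $|\lambda|\ge 1$ is an eigenvalue of the iteration matrix. Because $\rho(B)<1$ one has $\rho(B^k)\le\rho(B)^k<1$, hence $\lambda\notin\mathrm{Spec}(B)\cup\mathrm{Spec}(B^k)$, and Proposition \ref{prop:seim-k-shot:eq-eigen} both rules out $\lambda=1$ and forces $\lambda$ to satisfy the eigenvalue equation \eqref{eq:seim-k-shot:eq-eigen} for some unit vector $y$. If $\lambda\in\mathbb{R}$, Proposition \ref{prop:seim-k-shot:tau:real-lam} yields the contradiction under its explicit smallness condition on $\tau$. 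If $\lambda\in\mathbb{C}\setminus\mathbb{R}$, the contradiction comes from Proposition \ref{prop:seim-k-shot:tau:complex-lam}, which is itself obtained by combining Lemmas \ref{lem:seim-k-shot:case1}--\ref{lem:seim-k-shot:case3} (Case~4 being empty by Lemma \ref{lem:gamma123}\,(iv)). Taking $\tau$ strictly below the minimum of these finitely many explicit thresholds therefore forces the spectral radius of the iteration matrix to be $<1$, which proves the qualitative part of the theorem.

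For the explicit bound when $\norm{B}<1$, I would collect the four polynomial factors already produced in those propositions: $f_k(b):=1-kb^{k-1}+(k-1)b^k$ from Proposition \ref{prop:seim-k-shot:tau:real-lam}, and $\psi_1,\psi_2,\psi_3$ from Proposition \ref{prop:seim-k-shot:tau:complex-lam}, all already written above the common denominator $(1-\norm{B})^2(1-\norm{B}^k)^2$. Setting
\[
\mathcal{P}_k(b) := \max\bigl\{f_k(b),\,\psi_1(k,b),\,\psi_2(k,b),\,\psi_3(k,b)\bigr\},
\]
one obtains a piecewise polynomial whose pieces have degree at most $4k$ (the maximum being attained by the last term of $\psi_3$); the single pure constant $C:=\max\{0,C_1,C_2,C_3\}$, independent of $k$, then absorbs all the $\alpha$-terms (the real-eigenvalue bound carries $-\tfrac12\alpha$, which is dominated by any $C\alpha\ge 0$).

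The only bookkeeping obstacle is the positivity $\mathcal{P}_k>0$ on $[0,1)$. I would argue this via $\psi_2$ alone: one first notes that $f_k(0)=1$, $f_k(1)=0$, and $f_k'(b)=k(k-1)b^{k-2}(b-1)\le 0$ on $[0,1]$, so $f_k>0$ on $[0,1)$; together with $(1-b^k)^2>0$ and $(1+b^k)^2>0$ on $[0,1)$, this gives $\psi_2(k,b)>0$ on $[0,1)$, hence $\mathcal{P}_k>0$ there. All remaining manipulations are mechanical combinations of the preceding propositions.
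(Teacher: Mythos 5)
Your proof is correct and follows essentially the same route as the paper, whose own proof is a single sentence combining Proposition~\ref{prop:seim-k-shot:tau:real-lam} (real eigenvalues) with Proposition~\ref{prop:seim-k-shot:tau:complex-lam} (complex eigenvalues); your explicit packaging via $\mathcal{P}_k=\max\{f_k,\psi_1,\psi_2,\psi_3\}$ and $C=\max\{0,C_1,C_2,C_3\}$ is a valid instantiation of the stated bound. Two cosmetic slips that do not affect validity: the degree $4k$ is attained by $\psi_2$ (product of a degree-$2k$ factor with $(1+b^k)^2$), not by the last term of $\psi_3$, which only has degree $3k$; and for $k=1$ one has $f_1\equiv 0$ (so $f_k(0)=1$ and $f_k>0$ on $[0,1)$ fail in that case), but the strict positivity of $\psi_2$, hence of $\mathcal{P}_k$, on $[0,1)$ still follows from the $(1-b^k)^2(1+b^k)^2$ term alone, exactly as your final inequality uses it.
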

		
		\noindent We emphasize that the bound of $\tau$ in Theorem \ref{th:seim-k-shot:tau:all} depends only on $\norm{B},\norm{M},\norm{H}$, the number of inner iterations $k$ and the regularization parameter $\alpha$. This bound in fact does not depend on the dimensions of $\sigma$, $u$ and $g$.
		Also, Theorem \ref{th:seim-k-shot:tau:all} includes the case $B=0$, but the bound for $\tau$ in this case is quite far from optimal. Note that the optimal bound for $\tau$ in the case $B=0$ can be found in Proposition \ref{prop:seim-1-shot:tau:B=0} (for $k=1$) and Remark \ref{rk:B=0,k>2} (for $k\ge2$).

		\section{Numerical experiments on a toy problem}
		\label{sec:num-exp}
		
		In order to compare the performance of classical gradient descent algorithm \eqref{alg:seimgd} with the $k$-step one-shot algorithms \eqref{alg:seim-k-shot}, we propose the following toy model related to the inverse conductivity problem in a cavity. 
		Given $\Omega\subset\R^2$ an open bounded regular domain, we consider a system of $M$ direct problems, each of which is the Helmholtz equation for the linearized scattered field $u \in\Hso^1(\Omega)$ given by
		\begin{equation}
			\label{u-toymod}
			\left\{
			\begin{array}{ll}
				\dive(\sigma_0\nabla u)+\omega^2u=\dive(\sigma\nabla u_0), &\text{in }\Omega, \\
				u=0, &\text{on }\partial\Omega,
			\end{array}
			\right.
		\end{equation}
		where the incident field $u_0\in\Hso^{1}(\Omega)$ satisfies
		\begin{equation}
			\label{u0-toymod}
			\left\{
			\begin{array}{ll}
				\dive(\sigma_0\nabla u_0)+\omega^2u_0=0, &\text{in }\Omega, \\
				u_0=f, &\text{on }\partial\Omega,
			\end{array}
			\right.
		\end{equation}
		with the boundary data $f=f_i\in\Hso^{1/2}(\partial\Omega)$, $1\le i\le M$.
		Here $\sigma$ and $\sigma_0\in\mathrm{L}^{\infty}(\Omega)$ are supposed to be positive functions such that the support  $\overline{\Omega}_0$ of $\sigma$ is strictly included inside $\Omega$. 
		Equation \eqref{u-toymod} is obtained by linearizing the scattered field problem using the Born approximation, and $\sigma$ is the \emph{conductivity contrast} with respect to $\sigma_0$.
		The variational formulation of \eqref{u-toymod} is: find $u\in\Hso^1_0(\Omega)$ such that
		\begin{equation}
			\label{vf:u-toymod}
			\int_{\Omega}\sigma_0\nabla u\cdot\nabla v \dx-\int_\Omega \omega^2uv \dx=\int_{\Omega_0}\sigma\nabla u_0\cdot\nabla v \dx, \quad\forall v\in\Hso^1_0(\Omega).
		\end{equation}
		We consider the inverse problem of retrieving $\sigma$ from measurements $g\coloneqq \sigma_0\frac{\pa u}{\pa\nu}\big|_{\partial\Omega}$ (we indeed collect $M$ different measurements).
		
		\noindent By discretizing $u$ using $\mathbb{P}^1$-Lagrange finite elements on a mesh $\mathcal{T}_h(\Omega)$ of $\Omega$, and $\sigma$ by $\mathbb{P}^0$-Lagrange finite elements on a coarser mesh $\widetilde{\mathcal{T}}_{h'}(\Omega_0)$ of $\Omega_0$, the discretization of \eqref{vf:u-toymod} leads to a linear system of the form 
		\begin{equation}
			\label{eq:discret-toymod}
			A_1\vec u=A_2\vec\sigma,
		\end{equation}
		where $\vec u\in\R^{n_u}$, $\vec \sigma\in\R^{n_\sigma}$ with $n_u$ denoting the number of inner nodes of $\mathcal{T}_h(\Omega)$ and $n_\sigma$ denoting the number of triangles in $\widetilde{\mathcal{T}}_{h'}(\Omega_0)$. 
		In order to rewrite the linear system in the form \eqref{prb-direct} with a controllable norm for the matrix $B$, we choose to parameterize $\sigma_0$ as
		$\sigma_0=\tilde{\sigma}_0+\delta\sigmar$ where $\delta>0$ is a small parameter and $\sigmar\le1$ is a random function. With this choice of $\sigma_0$, we can write the matrix $A_1$ as 
		$A_1=A_{11}+\delta A_{12}$
		where $A_{11}$ corresponds to the discretization of the bilinear form $\int_{\Omega}(\tilde{\sigma}_0\nabla u\cdot\nabla v-\omega^2uv)\dx$ on $\Hso^1_0(\Omega)\times \Hso^1_0(\Omega)$ and $A_{12}$ corresponds to the discretization of the bilinear form $\int_{\Omega}\sigmar\nabla u\cdot\nabla v\dx$  on $\Hso^1_0(\Omega)\times \Hso^1_0(\Omega)$. For $\omega^2$ not a Dirichlet eigenvalue of $-\dive(\tilde{\sigma}_0\nabla u)$ in $\Omega$, the matrix $A_{11}$ is invertible and we can equivalently write \eqref{eq:discret-toymod} as
		%
		\[
		\vec{u}=A_{11}^{-1}(-\delta A_{12}\vec{u}+A_2\vec{\sigma})
		\]
		which is in the form \eqref{prb-direct} with $B=-\delta A_{11}^{-1}A_{12}$, $M=A_{11}^{-1}A_2$ and $F=0$.
		\noindent Indeed $\norm{B}<1$ for sufficiently small $\delta$. We employed the finite element library FreeFEM 	 \cite{FreeFEM} to generate the matrices $A_{11}, A_{12}, A_2$ and the measurement operator $H\in\R^{n_g\times n_u}$, which is the discretization of the normal trace operator $\sigma_0\frac{\pa u}{\pa\nu}\big|_{\partial\Omega}$, where $n_g$	denotes the number of nodes on $\partial\Omega$. 
		
		For the numerical tests below, we set $\omega=2\pi$, $\tilde{\sigma}_0=1$, $ \delta=0.01$ and the mesh size $h={\lambda}/{20}=0.05$ where $\lambda=\sqrt{\tilde{\sigma}_0}{2\pi}/{\omega}=1$. The domain $\Omega$ is the disk of radius $R=2\lambda$ and $\Omega_0$ is formed by three squares as in Figures \ref{fig:domain} and \ref{fig:meshnoise}. 
		To generate measurements, we use $M=6$ resulting in six different incident fields $u_0$ corresponding to imposing boundary data $f=f_i$, $1\le i\le 6$, where $f_i(x)= Y_0(\omega|x-y_i|)$ with $y_i$ located (outside $\Omega$) on the circle of radius $R+0.25\lambda$ (see Figure \ref{fig:domain}). The function $Y_0$ is the Bessel function of the second kind of order zero.
		The cost functional is then the sum of the cost functionals associated with each of the incident fields.
		We take as exact solution $\sigma^\mathrm{ex}=10$ in each square, and as initial guess $\sigma^0=12$ in each square.

		\subsection{The case of noise-free data}
		\label{sec:exp1}
		
		
		\begin{figure}
			\centering
			\subfloat[][Mesh for $u$ ($n_u=5849$). \label{fig:Thu_v1}]
			{\includegraphics[width=.32\textwidth]{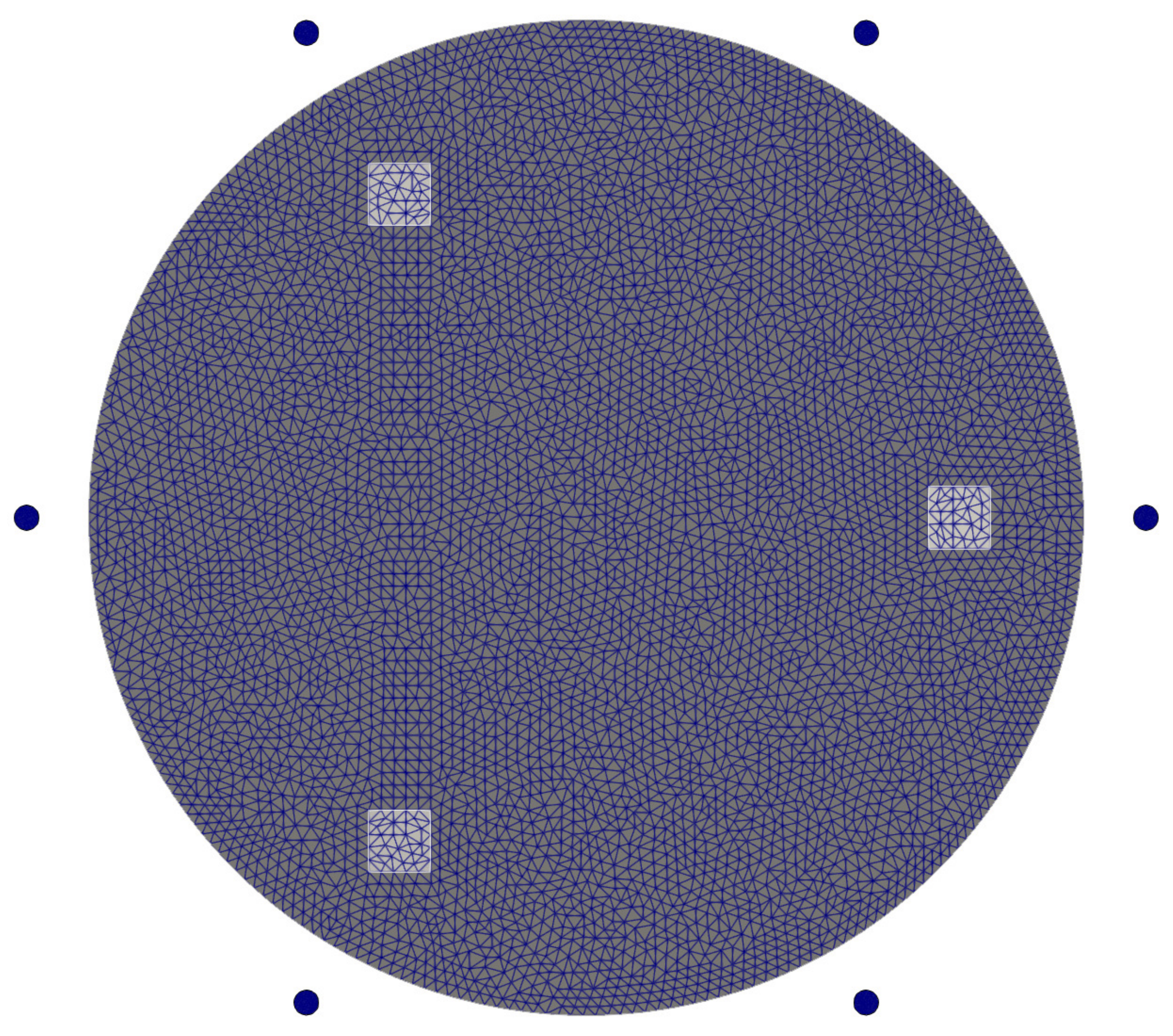}} 
			\hfil
			\subfloat[][Mesh for $\sigma$ ($n_\sigma=6$). \label{fig:Ths_v1}]
			{\includegraphics[width=.26\textwidth]{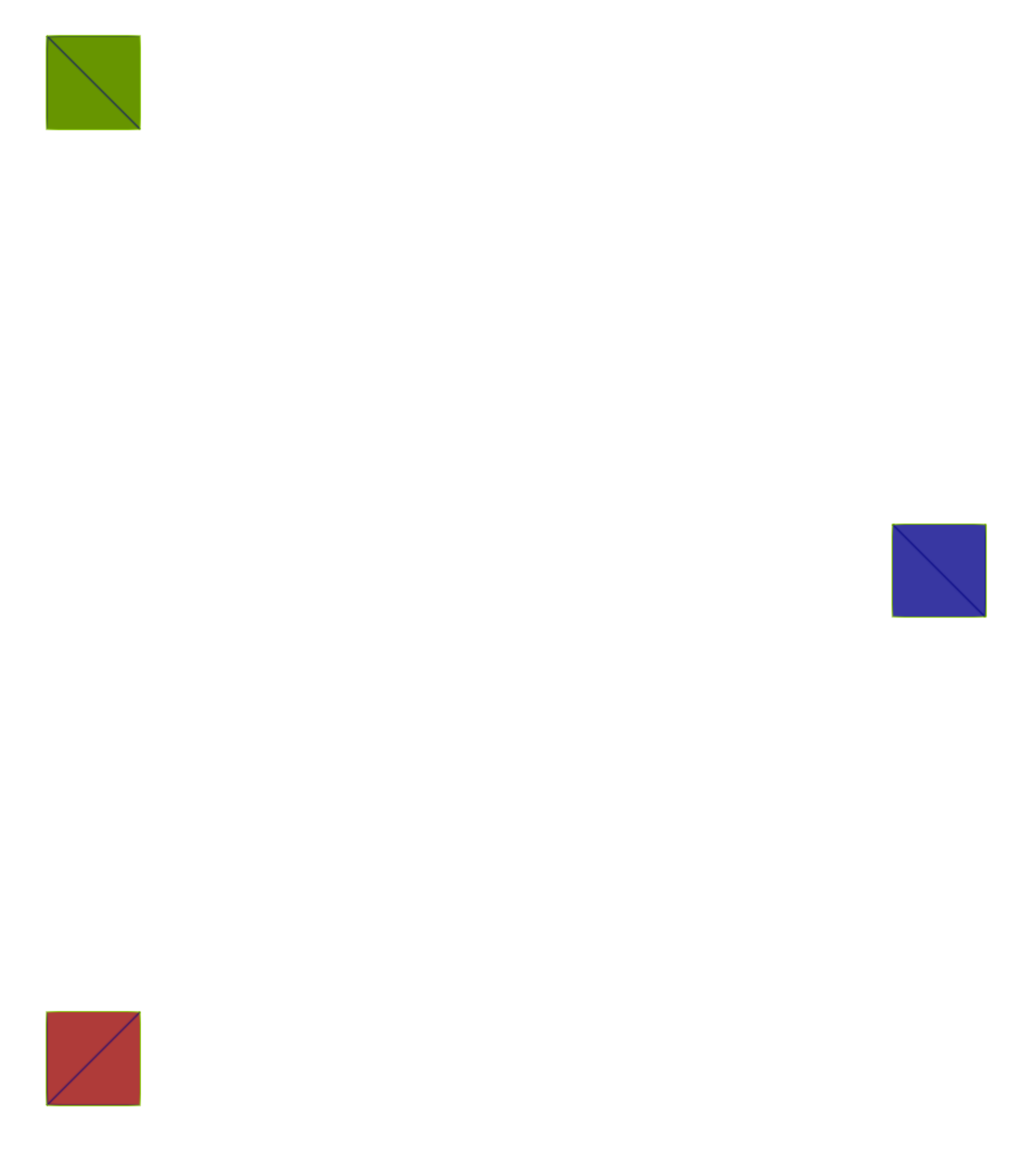}} 
			\hfil
			\subfloat[][Mesh for $g$ ($n_g=125$). \label{fig:Thumes_v1}]
			{\begin{tikzpicture}
					\def \n {125} 
					\def \radius {4cm}
					\node[circle,draw=gray,fill=white,minimum size=\radius] (a) at (0,0) {};
					\foreach \X in {1,...,\n}
					{\def \s {-360/\n*\X};
						\draw[black] (a.\s) -- ++ (\s:0.04);}
			\end{tikzpicture}} 
			\caption{The configuration for the experiment in Section \ref{sec:exp1}. The small circles outside the mesh for $u$ indicate the source locations $y_i$, $1\le i\le 6$.}
			\label{fig:domain}
		\end{figure} 
		
		\noindent We first consider the case of noise-free data, without regularization ($\alpha=0$). 
		Here, the domain $\Omega_0$ is formed by three squares of size ${\lambda}/{4}$ distributed as shown in Figure \ref{fig:domain}. We set the mesh size $h'={\lambda}/{4}$ so that each square is divided into two triangles (see Figure \ref{fig:Ths_v1}), which gives $n_\sigma=6$. 
		The mesh used for $\Omega$ (see Figure \ref{fig:Thu_v1}) leads to $n_u=5849$. The boundary mesh used for generating the data $g$ (see Figure \ref{fig:Thumes_v1}) is two times coarser than the mesh for $u$ and this gives $n_g=125$.
		We compare the performances of $k$-step one-shot methods \eqref{alg:seim-k-shot} (which coincide with \eqref{alg:k-shot} in the present case $\alpha=0$).
		Recall that $k$ is the number of inner iterations on the direct and adjoint problems. 
		We consider two series of experiments. 
		
		
		\begin{figure}[htb]
			\centering
			\subfloat[][Gradient descent and $1$-step one-shot. \label{fig:noise=0,k=1a}]
			{\includegraphics[width=.45\textwidth]{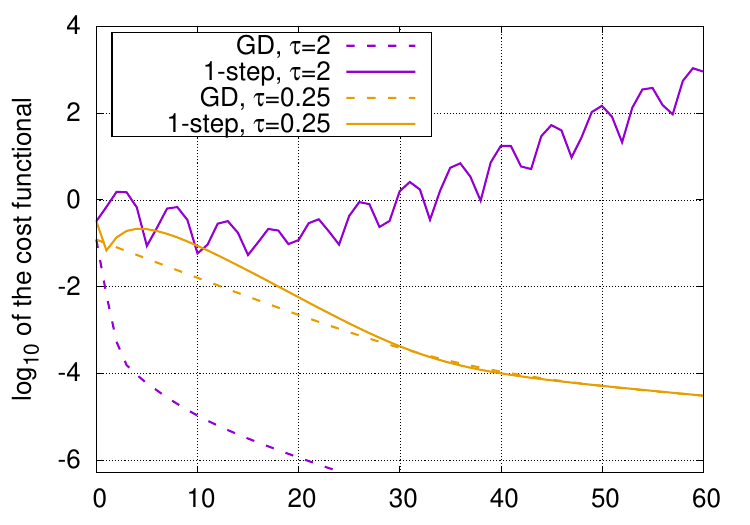}} 
			\hfil
			\subfloat[][$1$-step one-shot. \label{fig:noise=0,k=1b}]
			{\includegraphics[width=.45\textwidth]{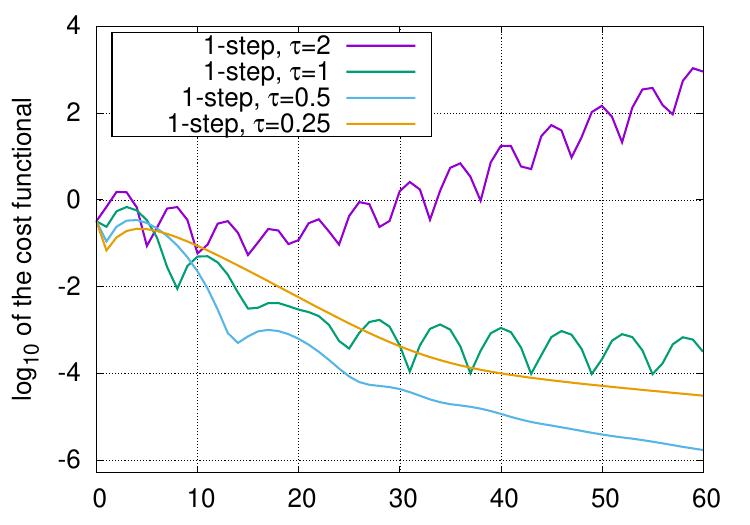}} 
			\\
			\subfloat[][Gradient descent and $2$-step one-shot. \label{fig:noise=0,k=2a}]
			{\includegraphics[width=.45\textwidth]{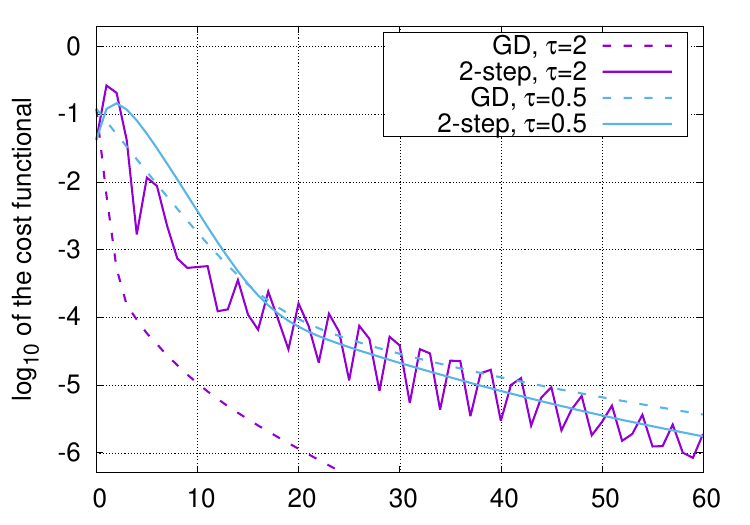}} 
			\hfil
			\subfloat[][$2$-step one-shot. 
			\label{fig:noise=0,k=2b}]
			{\includegraphics[width=.45\textwidth]{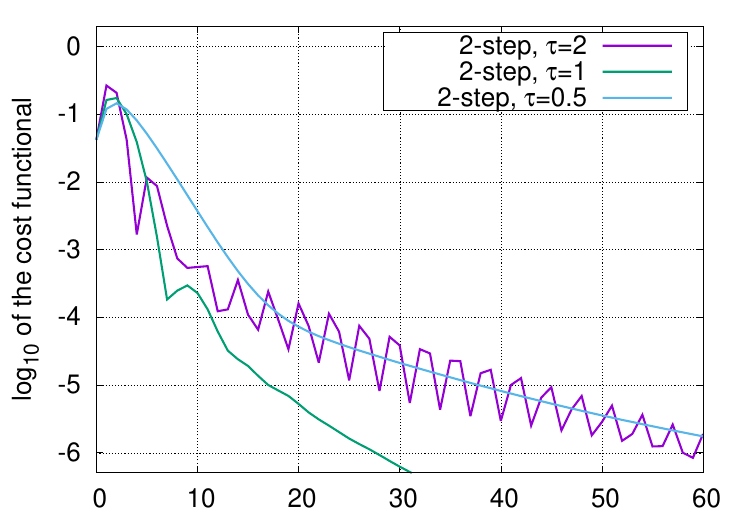}} 
			\caption{Convergence curves of gradient descent and $k$-step one-shot: dependence on the descent step $\tau$.}
			\label{fig:noise=0:k=1}
		\end{figure}
		In the first one, we study the dependence on the descent step $\tau$. In Figure \ref{fig:noise=0,k=1a}--\ref{fig:noise=0,k=1b} and \ref{fig:noise=0,k=2a}--\ref{fig:noise=0,k=2b} we respectively fix $k=1$ and $k=2$ and compare $k$-step one-shot methods with the gradient descent method. We plot in semi-log scale the value of the cost functional in terms of the (outer) iteration number $n$ in \eqref{alg:usualgd} and \eqref{alg:seim-k-shot}. We can verify that for sufficiently small $\tau$, the $k$-step one-shot methods  converge. This is not always the case for larger value of $\tau$. 
		In particular, for $\tau=2$, while gradient descent and $2$-step one-shot converge, $1$-step one-shot diverges. Oscillations may appear on the convergence curve for certain values of $\tau$, but they gradually disappear when $\tau$ gets smaller. For sufficiently small $\tau$, the convergence curves of one-shot methods are comparable to the one of gradient descent.
		
		\begin{figure}[htb]
			\centering
			\subfloat[][Gradient descent and $k$-step one-shot with $\tau=2$. 	\label{fig:noise=0,tau=2}]
			{\includegraphics[width=0.46\linewidth]{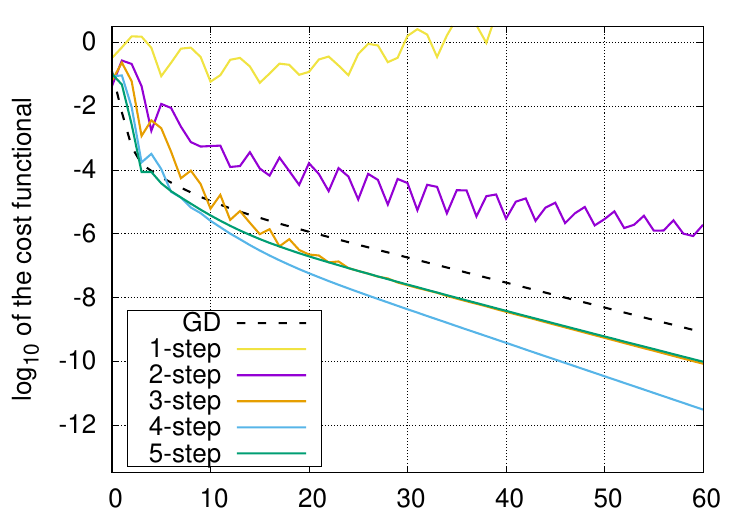}} 
			\hfil
			\subfloat[][$k$-step one-shot with $\tau=2$ in terms of the accumulated inner iteration number.\label{fig:acc,noise=0,tau=2}]
			{\includegraphics[width=0.46\linewidth]{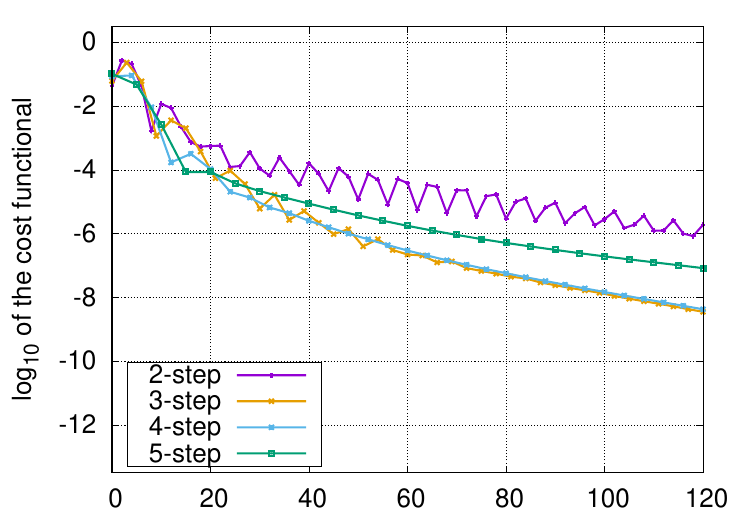}} 
			\\
			\subfloat[][Gradient descent and $k$-step one-shot with $\tau=2.5$.	\label{fig:noise=0,tau=2.5}]
			{\includegraphics[width=0.46\linewidth]{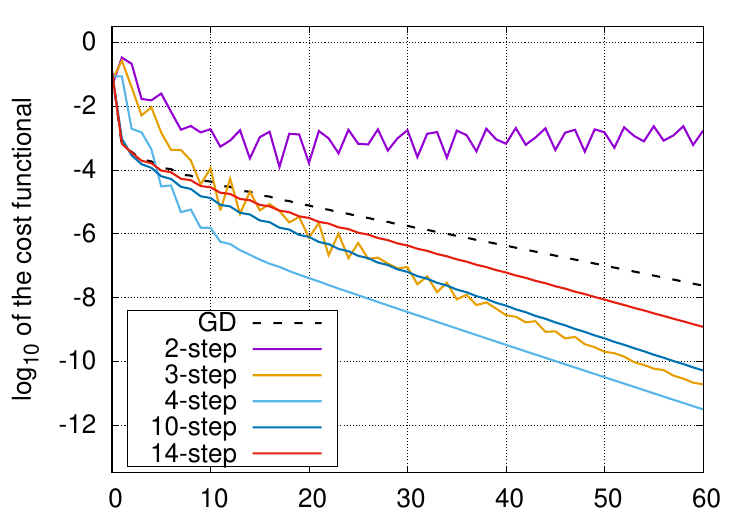}} 
			\hfil
			\subfloat[][$k$-step one-shot with $\tau=2.5$ in terms of the accumulated inner iteration number.	\label{fig:acc,noise=0,tau=2.5}]
			{\includegraphics[width=0.46\linewidth]{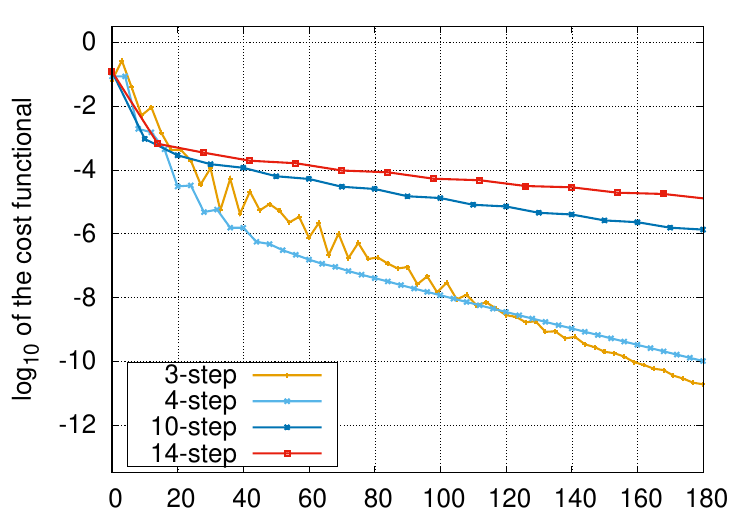}}
			\caption{Convergence curves of gradient descent and $k$-step one-shot: dependence on the number of inner iterations $k$.}
			\label{fig:noise=0,fix-tau}
		\end{figure}
		In the second series of experiments, we study the dependence on the number of inner iterations $k$, for fixed $\tau$. First (Figures~\ref{fig:noise=0,tau=2} and \ref{fig:noise=0,tau=2.5}), we investigate for which $k$ the convergence curve of $k$-step one-shot is comparable with the one of gradient descent, and, as in the previous figures, on the horizontal axis we indicate the (outer) iteration number $n$.
		For $\tau=2$ (see Figure \ref{fig:noise=0,tau=2}), we observe that for $k=3,4$ the convergence curves of $k$-step one-shot are close to the one of gradient descent. 
		Note that with $3$ inner iterations the $\mathrm{L}^2$ error between $u^n$ and the exact solution to the forward problem ranges between $59\cdot10^{-9}$ and $0.48334$ for different $n$ in \eqref{alg:seim-k-shot}; in fact, this error is rather significant at the beginning of the iteration, then it reduces as we get closer to the convergence for the parameter $\sigma$.
		Therefore, incomplete inner iterations on the forward problem are enough to have good precision on the solution of the inverse problem.
		In the particular case $\tau =2.5$ (see Figure \ref{fig:noise=0,tau=2.5}), we observe an interesting phenomenon: when $k=3,4,10$, with $k$-step one-shot the cost functional decreases even faster than with gradient descent.
		For larger values of $k$, for example $k=14$, the convergence curve of one-shot method gets closer to the one of gradient descent as one may expect. 
		Next, in Figures~\ref{fig:acc,noise=0,tau=2} and \ref{fig:acc,noise=0,tau=2.5}, we display the results of the same experiment as in Figures~\ref{fig:noise=0,tau=2} and \ref{fig:noise=0,tau=2.5}, but this time on the horizontal axis we indicate the accumulated inner iteration number, which is equal to $kn$ where $n$ is the number of outer iterations. This would allow us to compare the overall speed of convergence among $k$-step one-shot methods.
		For $\tau=2$ (respectively for $\tau=2.5$), $k=3$ and $k=4$ (respectively $k=3$) appear to provide the fastest rate of convergence. 
		This confirms the potential interest that this type of method would have in solving large-scale inverse problem since few inner iterations are capable of providing fast convergence.

			\subsection{The case of noisy data}
			\label{sec:exp2}
			
			
			\begin{figure}
				\centering
				\subfloat[][Mesh for $u$ ($n_u=5582$). \label{fig:Thu_v2}]
				{\includegraphics[width=.32\textwidth]{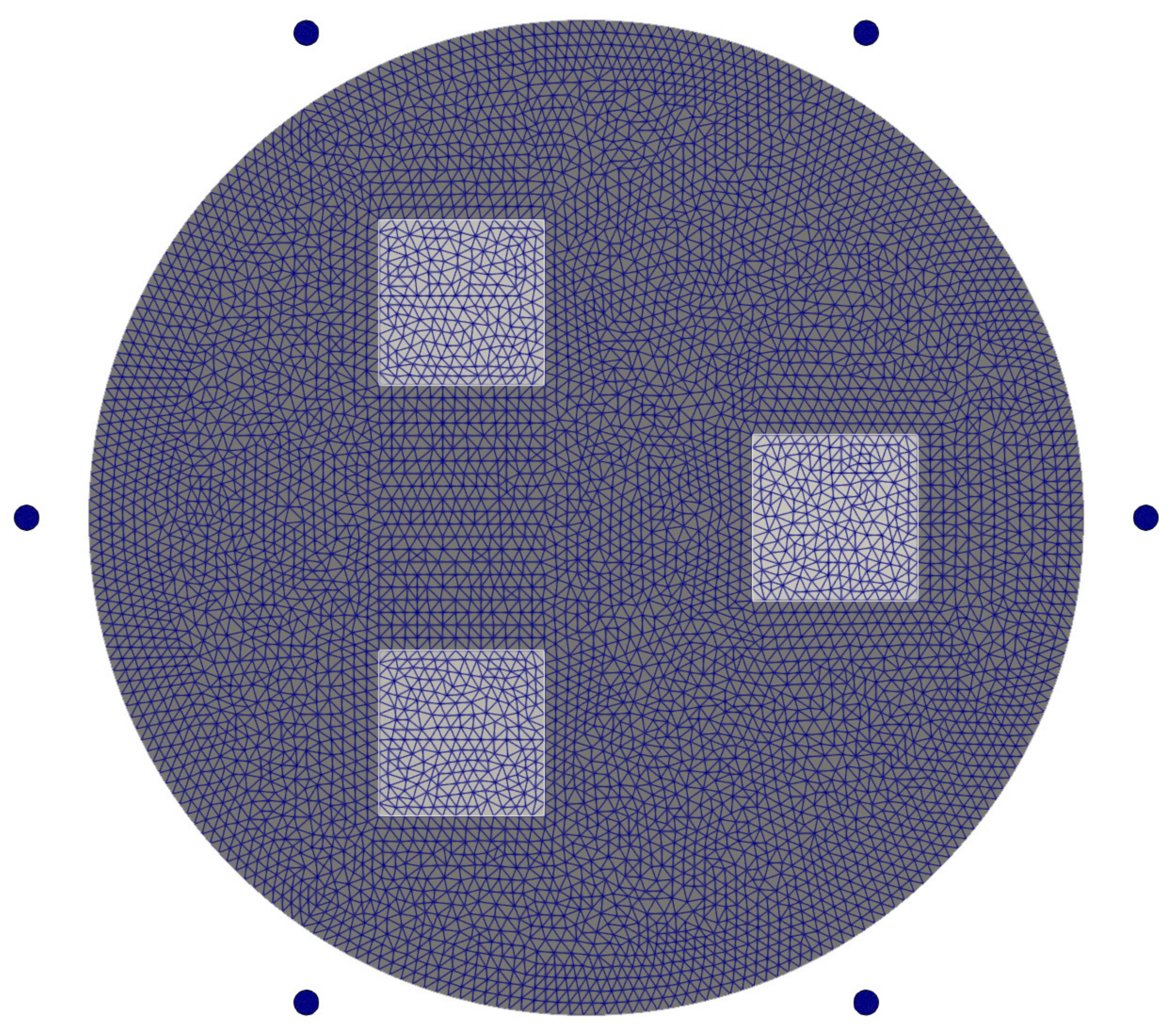}} 
				\hfil
				\subfloat[][Mesh for $\sigma$ ($n_\sigma=698$).\label{fig:Ths_v2}]
				{\includegraphics[width=.29\textwidth]{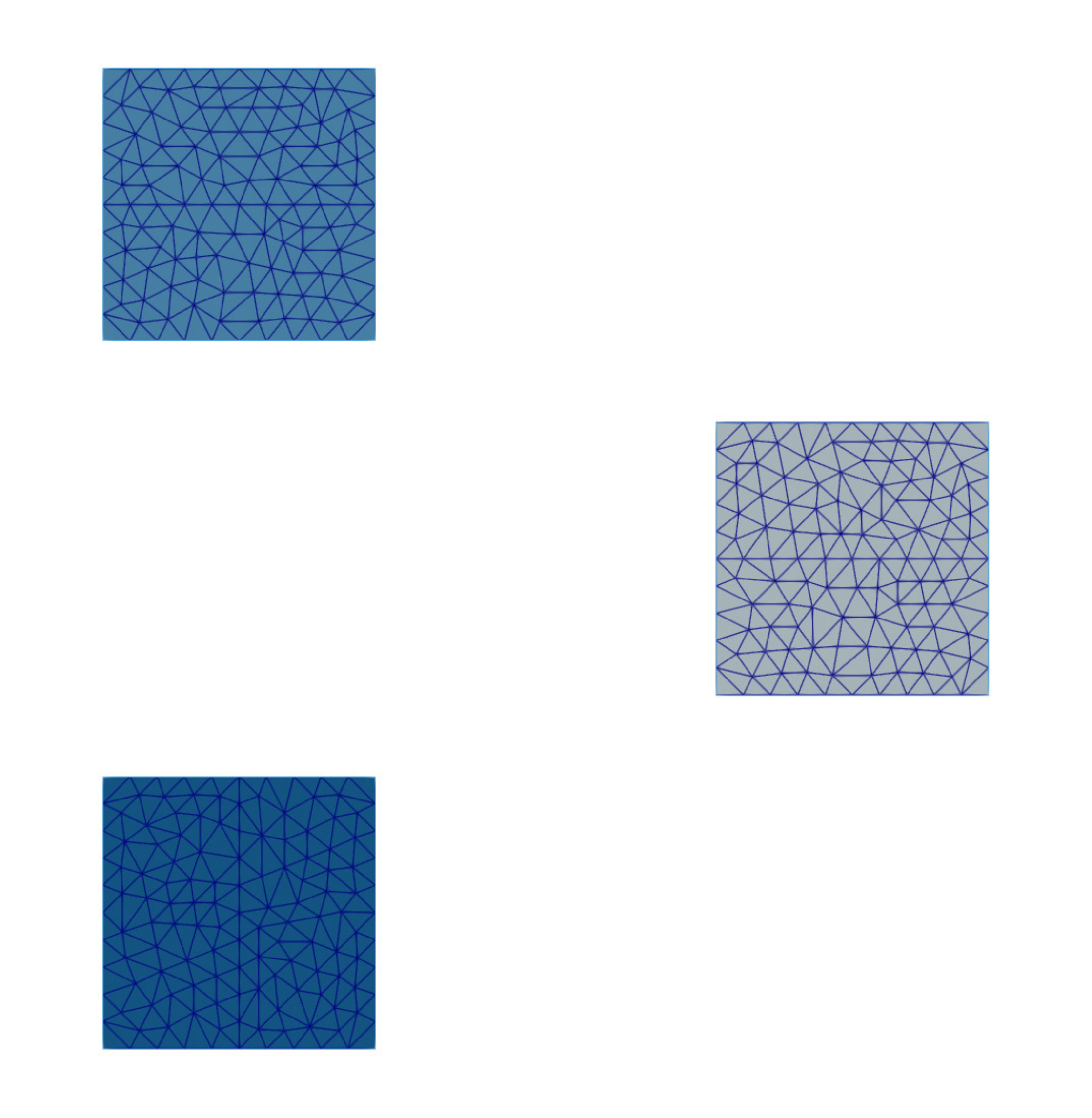}} 
				\hfil
				\subfloat[][Mesh for $g^\varepsilon$ ($n_g=12$). \label{fig:Thumes_v2}]
				{\begin{tikzpicture}
						\def \n {12} 
						\def \radius {4cm}
						\node[circle,draw=gray,fill=white,minimum size=\radius] (a) at (0,0) {};
						\foreach \X in {1,...,\n}
						{\def \s {-360/\n*\X};
							\draw[black] (a.\s) -- ++ (\s:0.04);}
				\end{tikzpicture}} 
				\caption{The configuration for the experiment in Section \ref{sec:exp2}. The small circles outside the mesh for $u$ indicate the source locations $y_i$, $1\le i\le 6$.}
				\label{fig:meshnoise}
			\end{figure} 
			
			\noindent We consider now the case where the measurements $g$ are corrupted with noise. More specifically, we replace the vector $g=Hu(\sigma^\mathrm{ex})$ in the cost functional by the vector $g^\varepsilon\in\R^{n_g}$ with $g^\varepsilon_i\coloneqq g_i+\varepsilon_i g_i$, where the $\varepsilon_i$ are random numbers uniformly distributed between $-\varepsilon$ and $\varepsilon$ for a noise level $\varepsilon$.
			In order to hit the ill-posedness of the inverse problem, we artificially increase the size of the discretization space for the parameter $\sigma$ to $n_\sigma=698$ by enlarging the size of the squares (now the size of their edges equals ${2\lambda}/{3}$ and the distance of their center from the boundary equals $\lambda$,  see Figure \ref{fig:Ths_v2}).
			We also show in Figure \ref{fig:Thu_v2} the mesh for $u$, which is about $20$ times finer than the boundary mesh used for generating the $6$ data $g^\varepsilon$ (see Figure \ref{fig:Thumes_v2}). In this new configuration, $n_u=5582$ and $n_g=12$. Notice also that the meshes of the squares in Figure \ref{fig:Thu_v2} and \ref{fig:Ths_v2} do not coincide.
			
			We perform several numerical tests with different noise levels: $\varepsilon=1\%,3\%$ and $5\%$. We choose the regularization parameter $\alpha$ depending on the noise level $\varepsilon$ in order to optimize the accuracy of the reconstruction. 
			This choice, which is made by trial and error, does not affect much the convergence of the algorithms (see Figures \ref{fig:Jn:noise135} and \ref{fig:RelaEn:noise135}) and mainly reduces the size of the oscillations for the reconstructed $\sigma$.
			The convergence curves displayed in Figure \ref{fig:Jn:noise135} for different noise levels show that the semi-implicit $k$-step one-shot methods ($k=3,4$) require a similar number of outer iterations as the semi-implicit gradient descent algorithm to achieve the same precision. 
			We also see from Figure \ref{fig:Jn:noise135} that the convergence curves with $\alpha\neq 0$ look less steep than those with $\alpha=0$.
			Since in the case of noisy data the convergence for the cost functional does not imply in general the accuracy of the reconstructed parameter $\sigma$, we also plot convergence curves for the relative error on $\sigma$ in Figure \ref{fig:RelaEn:noise135} to check the quality of the reconstruction. Also in these plots we see that the semi-implicit $k$-step one-shot methods ($k=3,4$) require a similar number of outer iterations as the semi-implicit gradient descent algorithm to achieve the same accuracy. 
			Indeed, this proves the potential of these methods since only a few inner iterations are used. 
			Moreover, the chosen values of $\alpha\neq 0$ adapted to the noise level give fairly better relative error curves.
			Finally, to better show the effect of the regularization, Figures \ref{fig:sigman:seim-3-shot,noise1,alp0}--\ref{fig:sigman:seim-3-shot,noise5,alp2e-4} display the final reconstructions for $\sigma$ by semi-implicit $3$-step one-shot, whose relative error curves were presented in Figures \ref{fig:RelaEn:noise1:alp0}--\ref{fig:RelaEn:noise5:alp2e-4}. 
			On the left of the color scale we also indicate the actual maximum and minimum values attained for each reconstruction; note that this range gets wider for higher noise level. 
			These plots confirm the benefit of regularization for treating noisy data.
			
			
			\begin{figure}[htbp]
				\centering
				\subfloat[][$\varepsilon=1\%$, $\alpha=0$. 
				\label{fig:Jn:noise1:alp0}]
				{\includegraphics[width=0.33\textwidth]{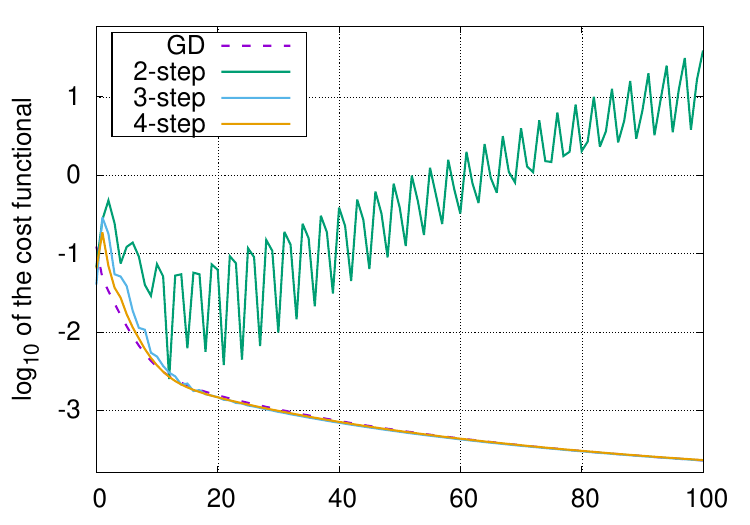}}
				\hfil
				\subfloat[][$\varepsilon=1\%$, $\alpha=0$. 
				\label{fig:Jn:noise3:alp0}]
				{\includegraphics[width=0.33\textwidth]{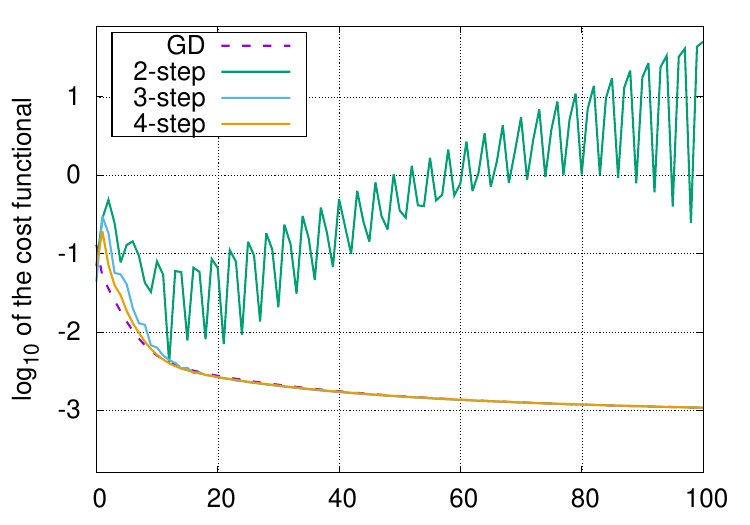}}
				\hfil
				\subfloat[][$\varepsilon=1\%$, $\alpha=0$. 
				\label{fig:Jn:noise5:alp0}]
				{\includegraphics[width=0.33\textwidth]{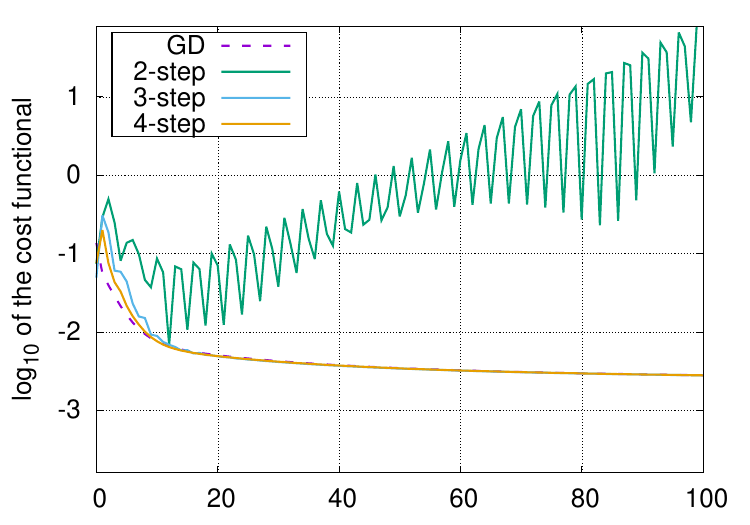}}
				\\[1ex]
				\subfloat[][$\varepsilon=1\%$, $\alpha=10^{-4}$.
				\label{fig:Jn:noise1:alp1e-4}]
				{\includegraphics[width=0.33\textwidth]{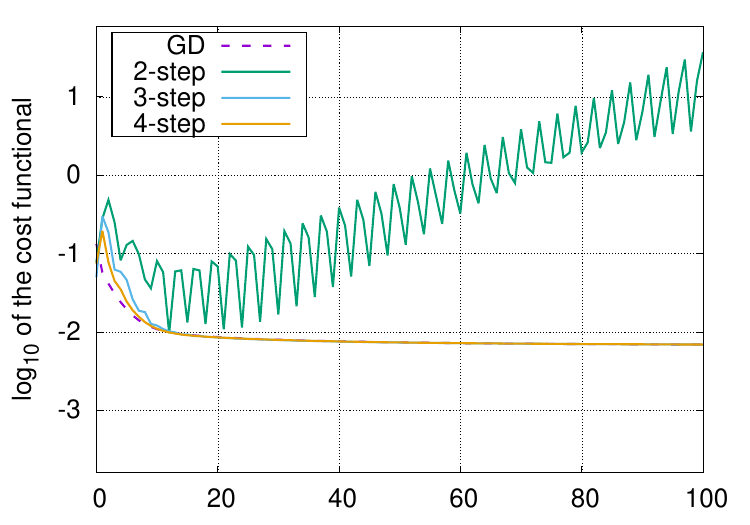}} 
				\hfil
				\subfloat[][$\varepsilon=3\%$, $\alpha=2\cdot10^{-4}$.
				\label{fig:Jn:noise3:alp2e-4}]
				{\includegraphics[width=0.33\textwidth]{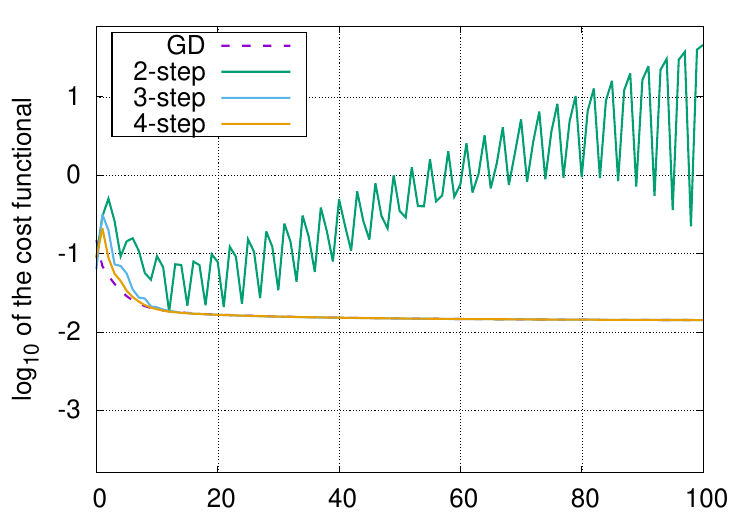}} 
				\hfil
				\subfloat[][$\varepsilon=5\%$, $\alpha=2\cdot10^{-4}$.
				\label{fig:Jn:noise5:alp2e-4}]
				{\includegraphics[width=0.33\textwidth]{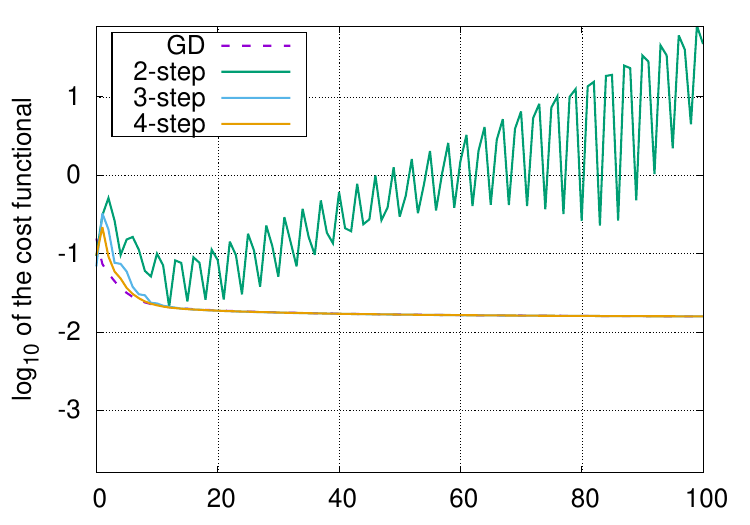}} 
				\caption{Convergence curves of semi-implicit gradient descent and $k$-step one-shot with different noise levels $\varepsilon$ and regularization parameters $\alpha$. The descent step is $\tau=4.7$.}
				\label{fig:Jn:noise135}
			\end{figure}

			\begin{figure}[htbp]
				\centering
				\subfloat[][$\varepsilon=1\%$, $\alpha=0$.
				\label{fig:RelaEn:noise1:alp0}]
				{\includegraphics[width=.33\textwidth]{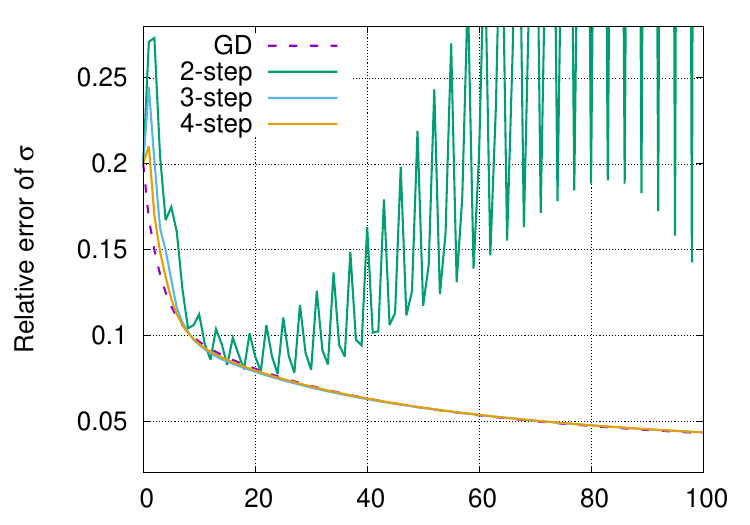}} 
				\hfil
				\subfloat[][$\varepsilon=3\%$, $\alpha=0$.
				\label{fig:RelaEn:noise3:alp0}]
				{\includegraphics[width=.33\textwidth]{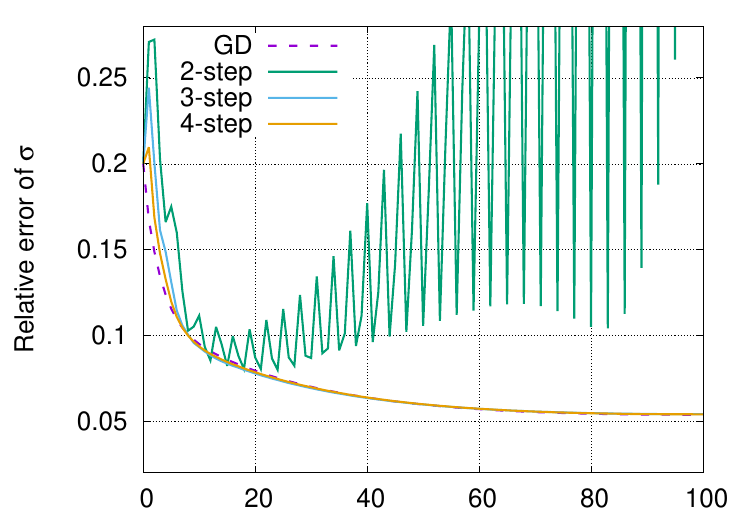}} 
				\hfil
				\subfloat[][$\varepsilon=5\%$, $\alpha=0$.
				\label{fig:RelaEn:noise5:alp0}]
				{\includegraphics[width=.33\textwidth]{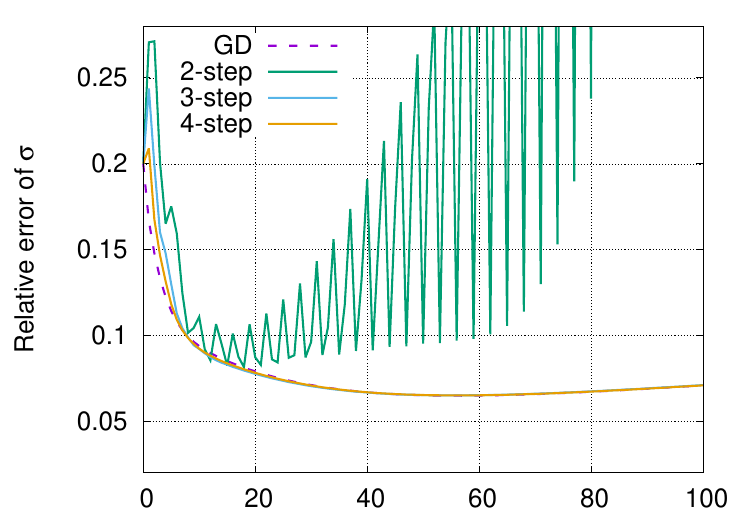}} 
				\\[1ex]
				\subfloat[][$\varepsilon=1\%$, $\alpha=10^{-4}$.
				\label{fig:RelaEn:noise1:alp1e-4}]
				{\includegraphics[width=.33\textwidth]{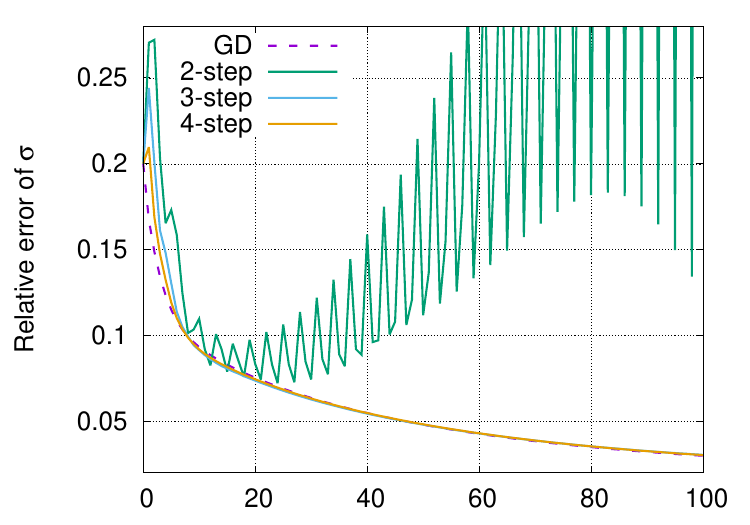}} 
				\hfil
				\subfloat[][$\varepsilon=3\%$, $\alpha=2\cdot10^{-4}$.
				\label{fig:RelaEn:noise3:alp2e-4}]
				{\includegraphics[width=.33\textwidth]{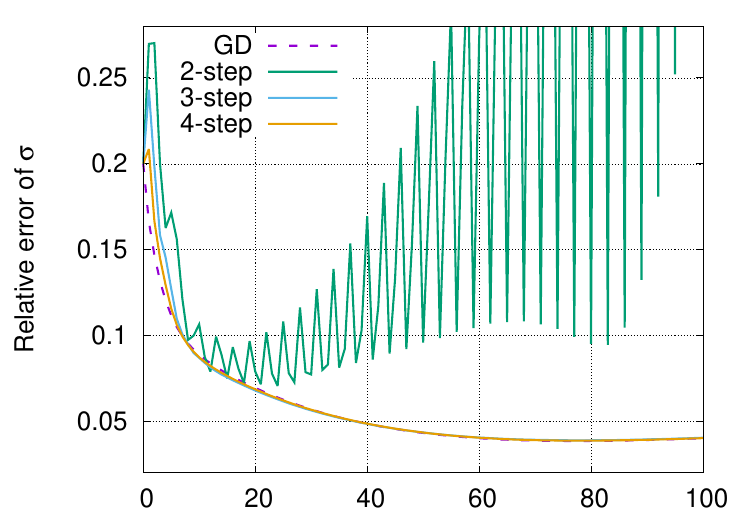}} 
				\hfil
				\subfloat[][$\varepsilon=5\%$, $\alpha=2\cdot10^{-4}$.
				\label{fig:RelaEn:noise5:alp2e-4}]
				{\includegraphics[width=.33\textwidth]{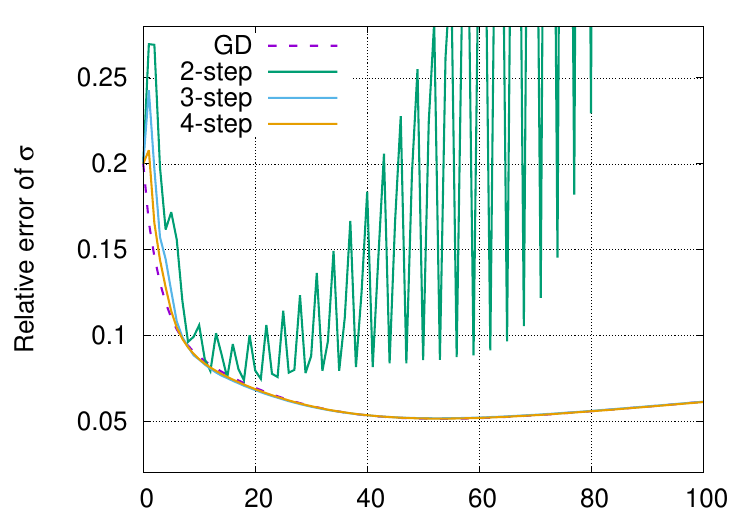}} 
				\caption{Convergence curves for the relative error on the parameter $\sigma$ of semi-implicit gradient descent and $k$-step one-shot with different noise levels $\varepsilon$ and regularization parameters $\alpha$. The descent step is $\tau=4.7$.}
				\label{fig:RelaEn:noise135}
			\end{figure}

			\begin{figure}[htbp]
				\centering
				\subfloat[][$\varepsilon=1\%$, $\alpha=0$.
				\label{fig:sigman:seim-3-shot,noise1,alp0}]
				{\includegraphics[width=.33\textwidth]{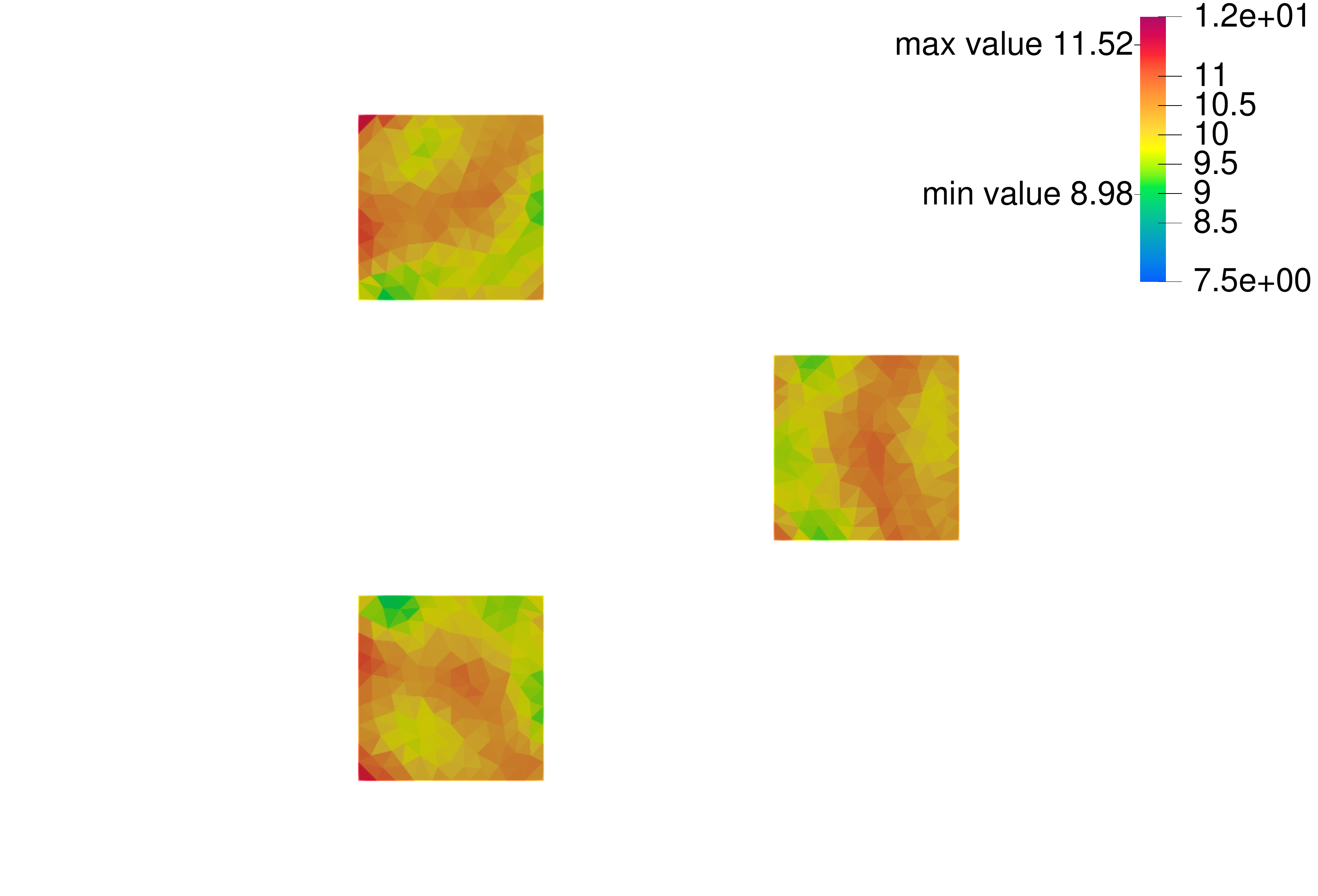}}
				\hfil
				\subfloat[][$\varepsilon=3\%$, $\alpha=0$.
				\label{fig:sigman:seim-3-shot,noise3,alp0}]
				{\includegraphics[width=.33\textwidth]{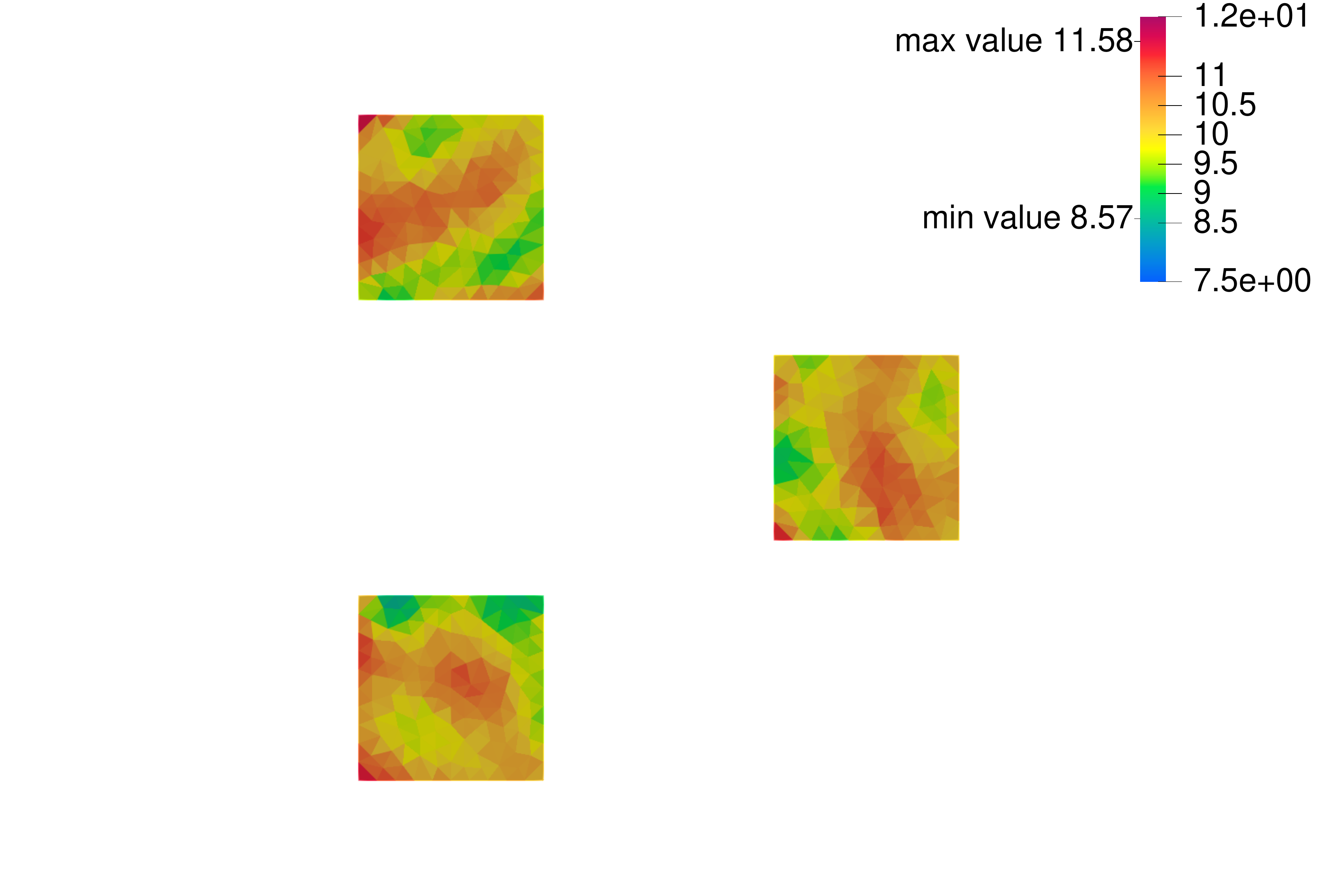}}
				\hfil
				\subfloat[][$\varepsilon=5\%$, $\alpha=0$.
				\label{fig:sigman:seim-3-shot,noise5,alp0}]
				{\includegraphics[width=.33\textwidth]{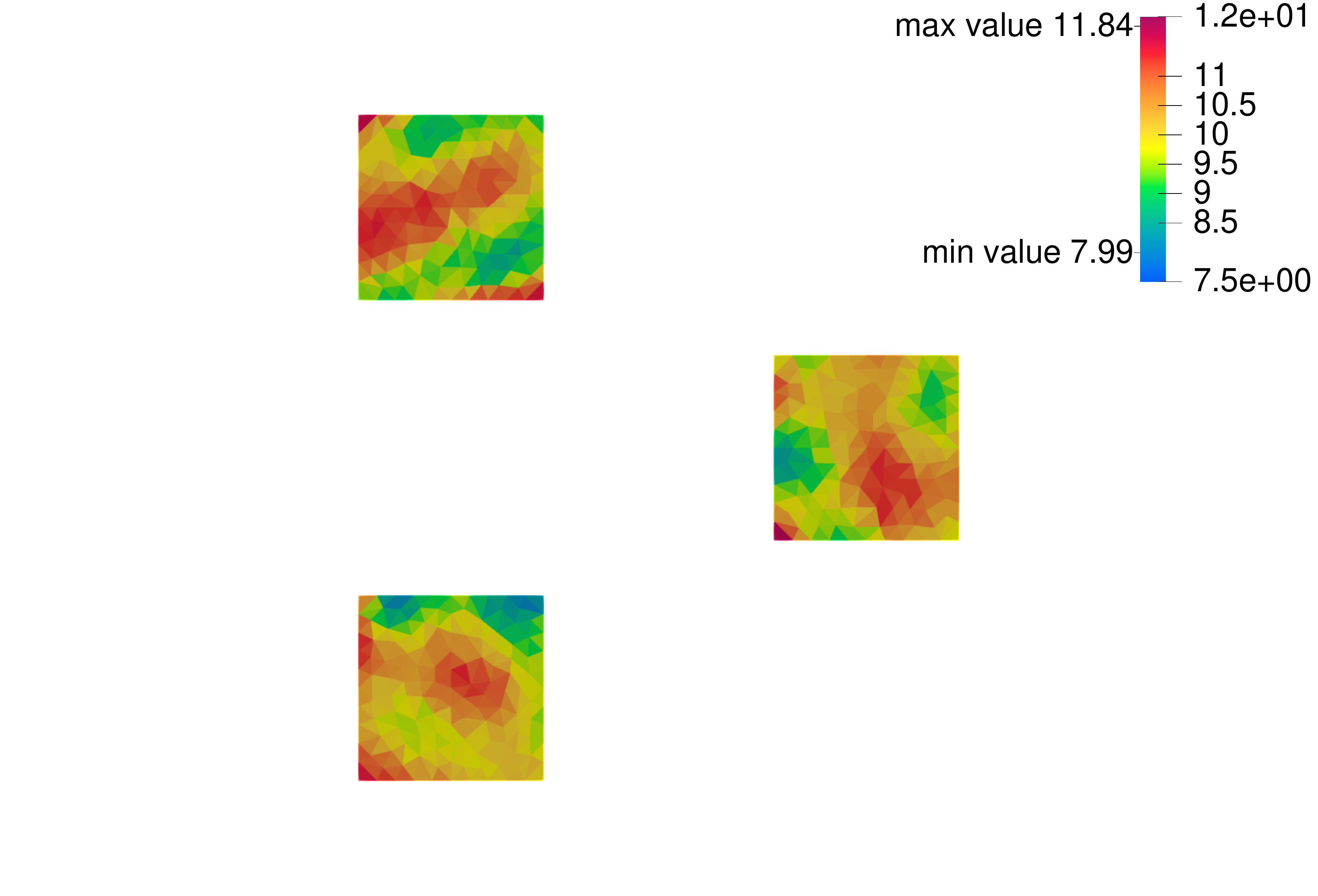}}
				\\[1ex]
				\subfloat[][$\varepsilon=1\%$, $\alpha=10^{-4}$.
				\label{fig:sigman:seim-3-shot,noise1,alp1e-4}]
				{\includegraphics[width=.33\textwidth]{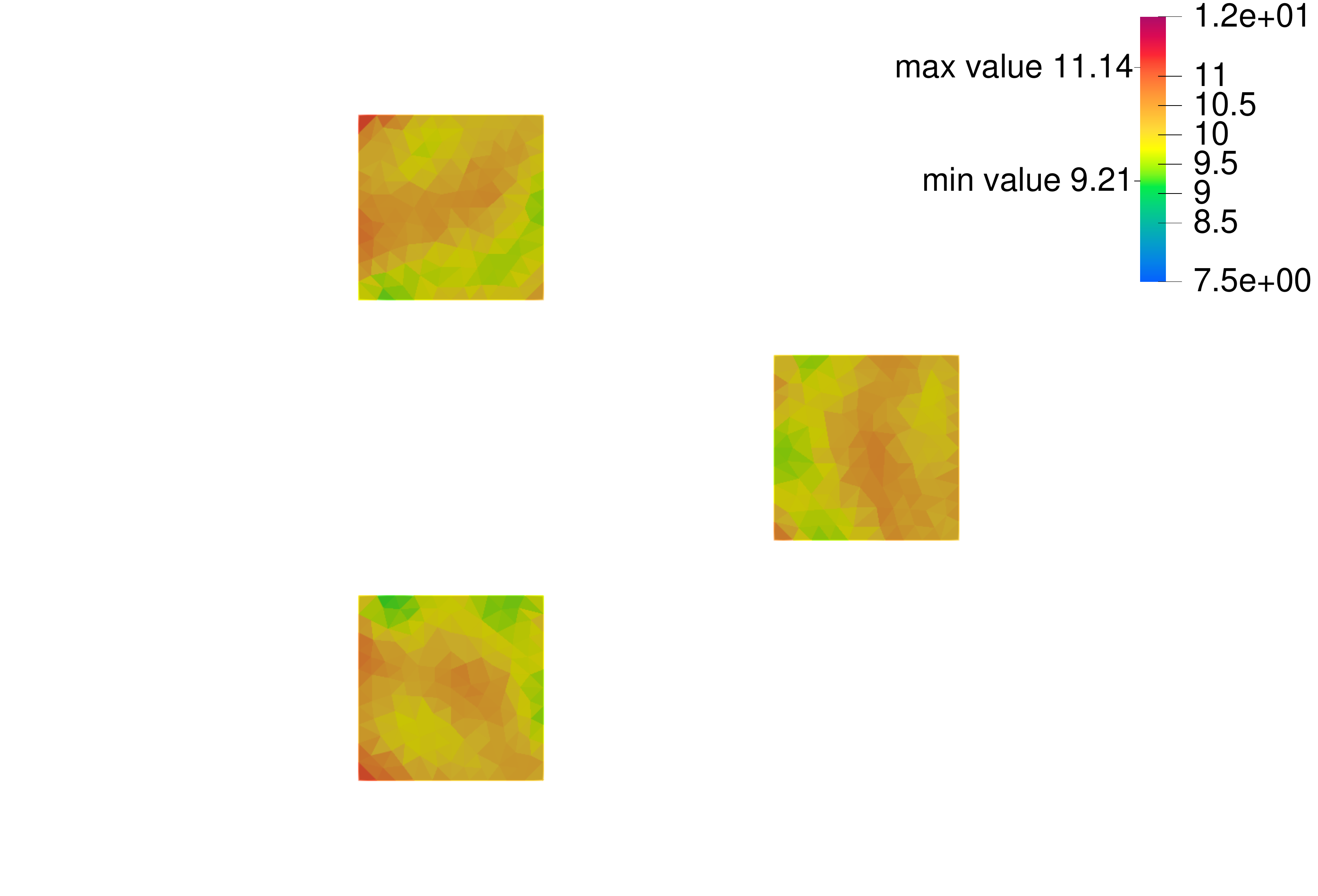}}
				\hfil
				\subfloat[][$\varepsilon=3\%$, $\alpha=2\cdot10^{-4}$.
				\label{fig:sigman:seim-3-shot,noise3,alp2e-4}]
				{\includegraphics[width=.33\textwidth]{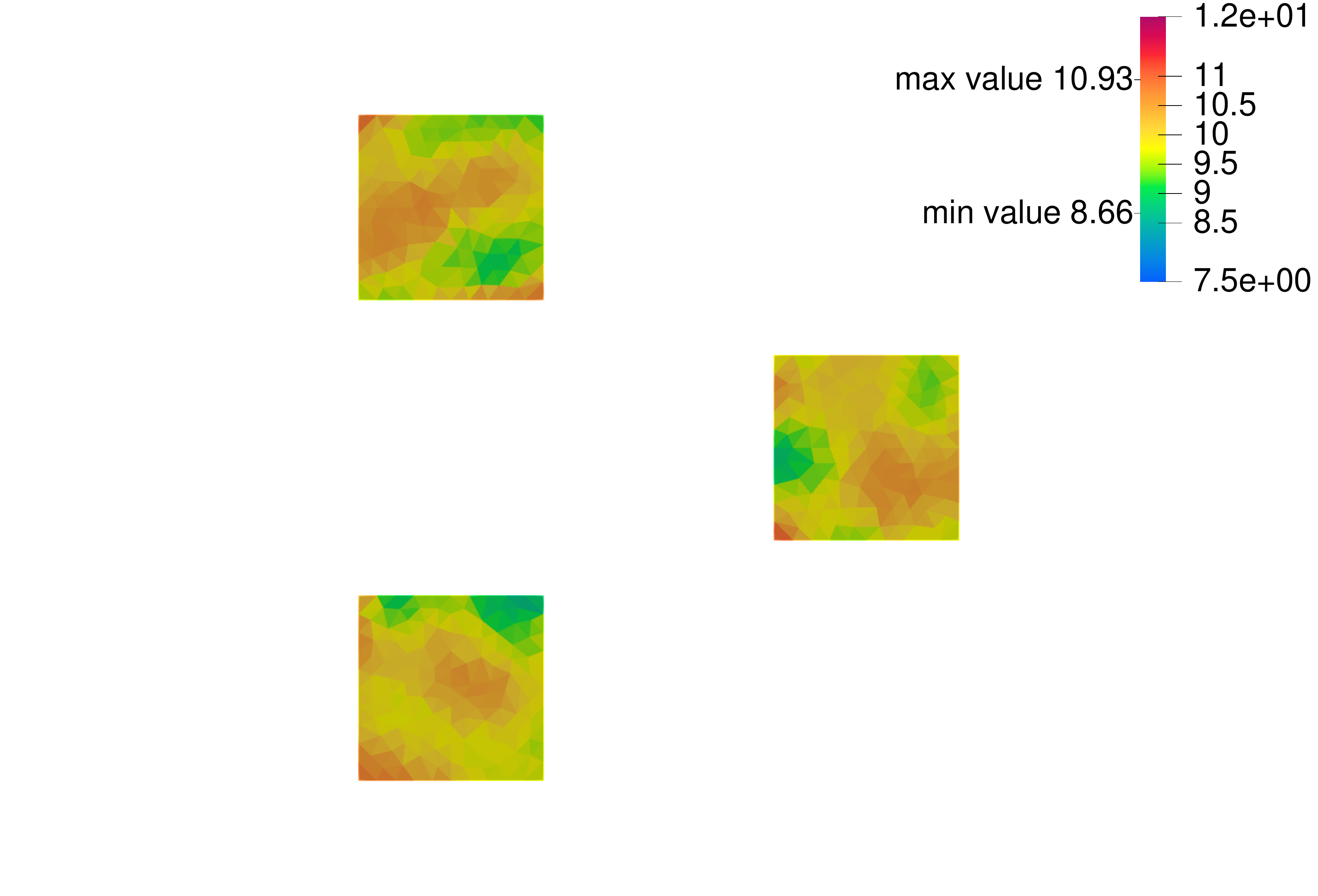}}
				\hfil
				\subfloat[][$\varepsilon=5\%$, $\alpha=2\cdot10^{-4}$.
				\label{fig:sigman:seim-3-shot,noise5,alp2e-4}]
				{\includegraphics[width=.33\textwidth]{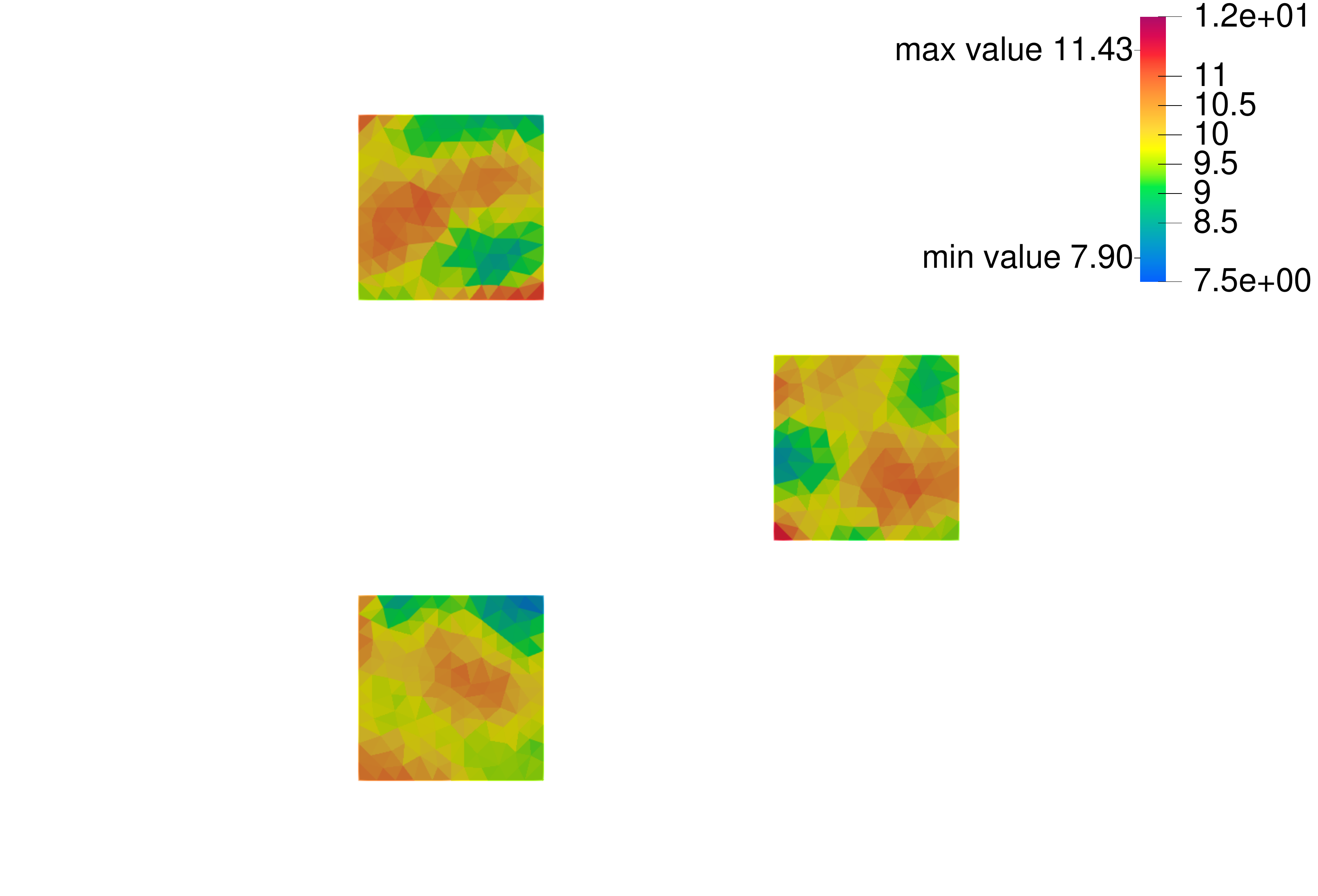}}
				\caption{Reconstructed $\sigma$ by semi-implicit $3$-step one-shot with different noise levels $\varepsilon$. The exact $\sigma$ is a constant function in each square with the value $\sigma^\mathrm{ex}=10$.}
				\label{fig:sigman,seim-3-shot,noise135}
			\end{figure}

						\subsection{Robustness with respect to the size of the discretized problem} 
							\label{sec:exp3}
							In the following experiment, we would like to confirm that the convergence rate of the $k$-step one-shot methods is asymptotically independent of the size of the discretized problem similarly to what is known for the gradient descent algorithms. 
							In order to modify the number of discretization points without modifying the structure of the background media, we replace the random contribution $\sigma_\mathrm{r}$ with the constant function $\sigma_\mathrm{r}=1$. 
							This means that we choose $\sigma_0=\tilde{\sigma}_0+\delta$. 
							(we fix $\delta=0.01$).
							We keep the same configuration as in the experiment presented in Figure \ref{fig:meshnoise} in terms of number of sources, number of measurements, and values of the exact conductivity contrast and of the initial guess (i.e.~$\sigma^\mathrm{ex}=10$ and $\sigma^0=12$ in each square), but we change the configuration of the support of the unknown conductivity contrast to investigate another test case.
							The exact data $g_i$, $1\le i\le 6$ are generated using a fine mesh corresponding to a mesh size $h=\lambda/60$ and a number of degrees of freedom $157844$. 
							The data are then corrupted with $\varepsilon=3\%$ relative random noise. For the inversion, we use two different meshes respectively corresponding to mesh sizes $h=\lambda/40$ ($n_u=70093$) and $h=\lambda/55$ ($n_u=128490$). 
							This experiment is illustrated by Figure \ref{fig:mesh_new_exp}, where the mesh with $h=\lambda/40$ is displayed. 
							We also show the mesh for the unknown $\sigma$, that results in $n_\sigma=10$ unknowns.
							For the inversion, we choose to show the result for the semi-implicit $2$-step one-shot method with fixed descent step $\tau=2$ and regularization parameter $\alpha=2\cdot10^{-4}$. 
							As shown in Figure \ref{fig:tune_h}, the number of outer iterations is not much affected by the size of the discrete system. 
							We observe the same behavior for other semi-implicit $k$-step one-shot methods with $k\ge3$.
						
						\subsection{Dependence of the number of outer iterations on the norm of $B$}
							\label{sec:exp4} 
							The purpose of this experiment is to illustrate how the convergence speed would depend on the norm of $B$. Although we did not analyze the convergence rate, one indeed expects that the smaller the norm of $B$, the faster the convergence.
							This is what is observed in Figure \ref{fig:tune_delta}, where we repeat the same experiment as in Figure \ref{fig:tune_h} for $h=\lambda/40$, but we vary the values of $\delta$ ($\delta=0.01$ and $\delta=0.02$). 
							For a given accuracy, the number of outer iterations for $\delta=0.01$ is less than the one for $\delta=0.02$.
						
						
						\begin{figure}
							\centering
							\subfloat[][Mesh for $u$ ($h=\lambda/40$). \label{fig:Thu_new_exp}]
							{\includegraphics[width=.33\textwidth]{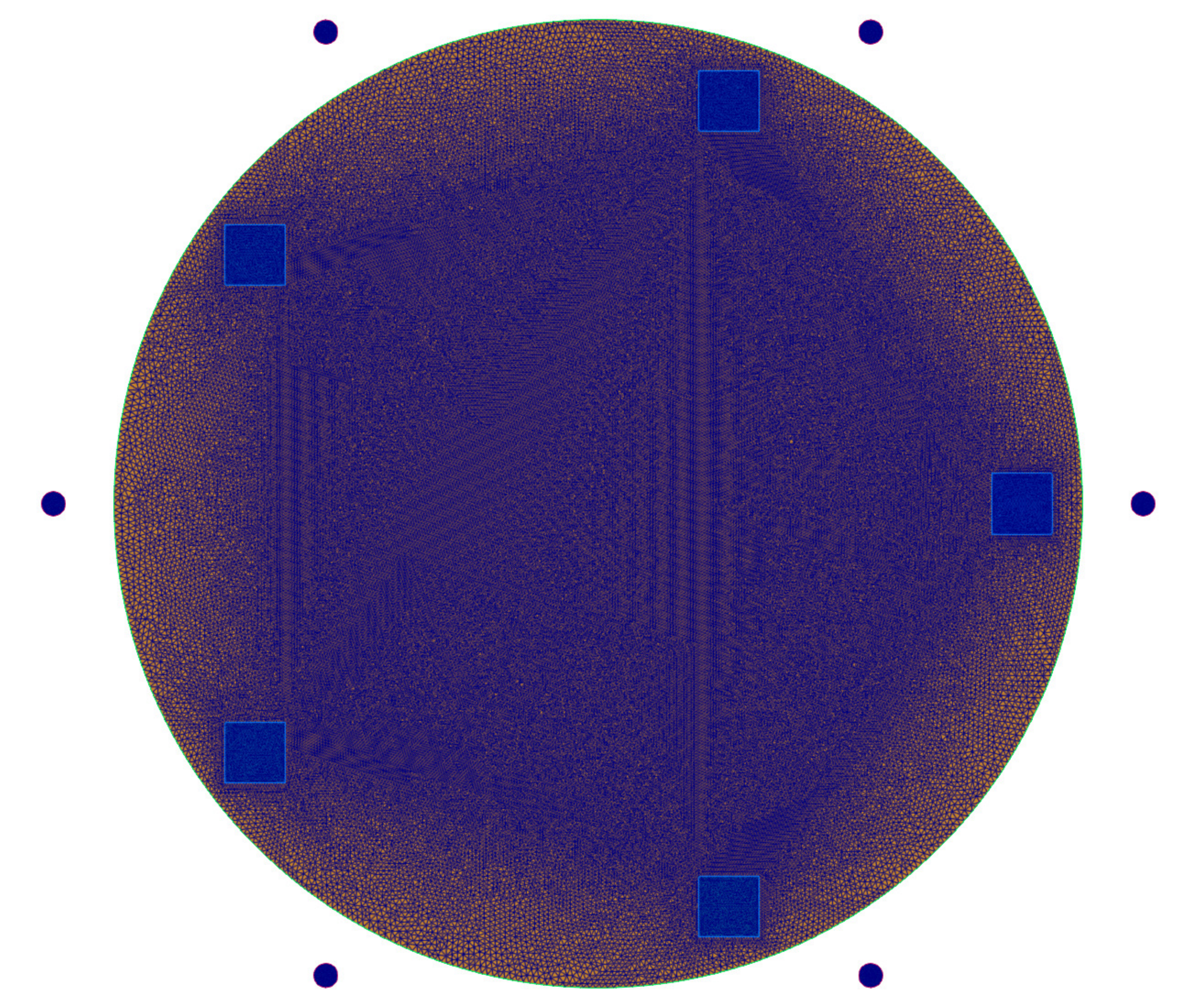}} 
							\hfil
							\subfloat[][Mesh for $\sigma$ ($n_\sigma=10$).\label{fig:Ths_new_exp}]
							{\includegraphics[width=.31\textwidth]{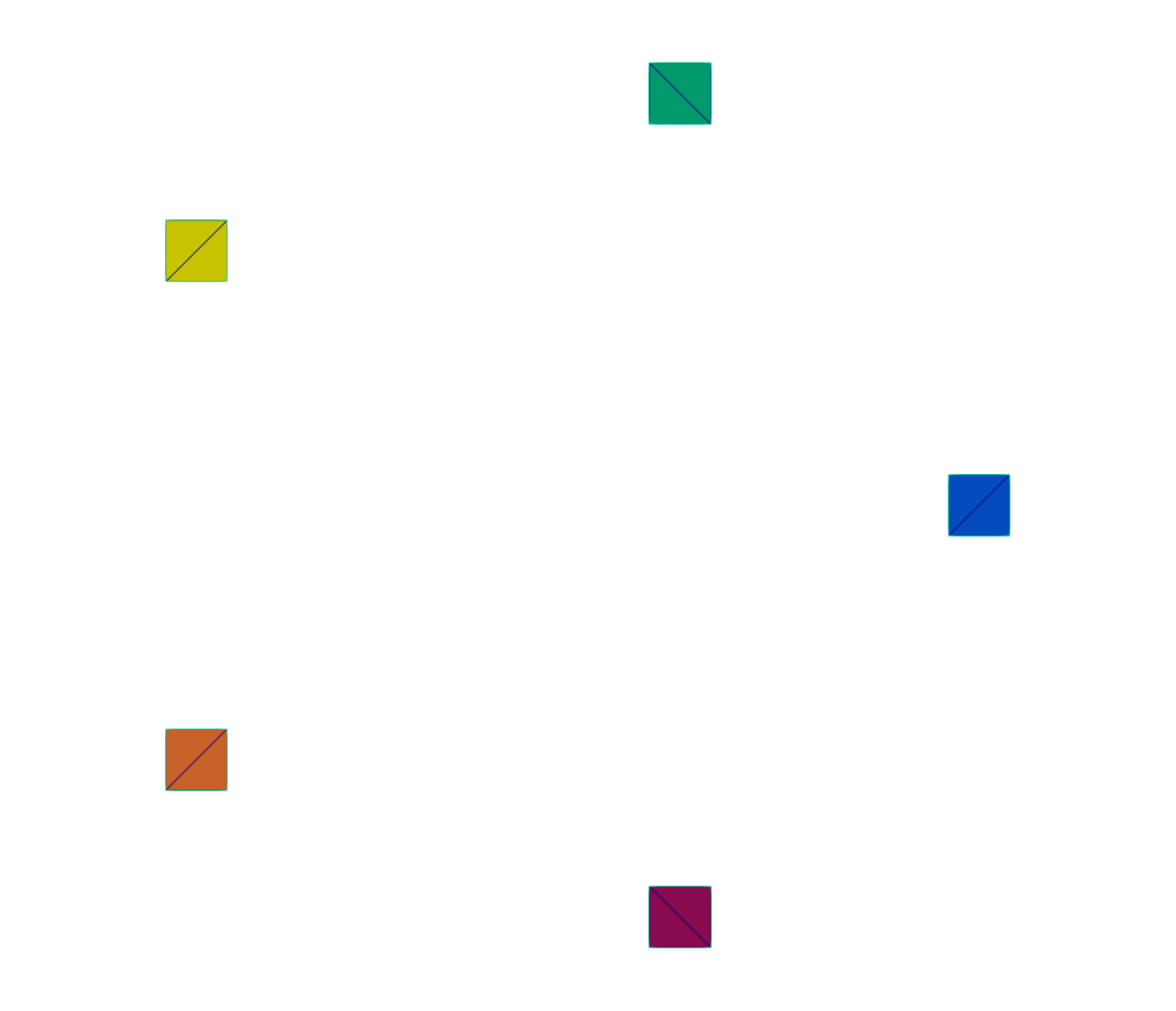}} 
							\hfil
							\subfloat[][Mesh for $g^\varepsilon$ ($n_g=12)$. \label{fig:Thumes_new_exp}]
							{\begin{tikzpicture}
									\def \n {12} 
									\def \radius {4cm}
									\node[circle,draw=gray,fill=white,minimum size=\radius] (a) at (0,0) {};
									\foreach \X in {1,...,\n}
									{\def \s {-360/\n*\X};
										\draw[black] (a.\s) -- ++ (\s:0.04);}
							\end{tikzpicture}} 
							\caption{The configuration for the experiment of Sections \ref{sec:exp3} and \ref{sec:exp4}. The values of the exact conductivity contrast is $\sigma^\mathrm{ex}=10$ in each square.
							The small circles outside the mesh for $u$ indicate the source locations $y_i$, $1\le i\le 6$.}
							\label{fig:mesh_new_exp}
						\end{figure} 
						
						\begin{figure}[htbp]
							\centering
							\subfloat[][$\log_{10}$ of the cost functional. 
							\label{fig:Jn_noise0_tau2_htune}]
							{\includegraphics[width=0.43\textwidth]{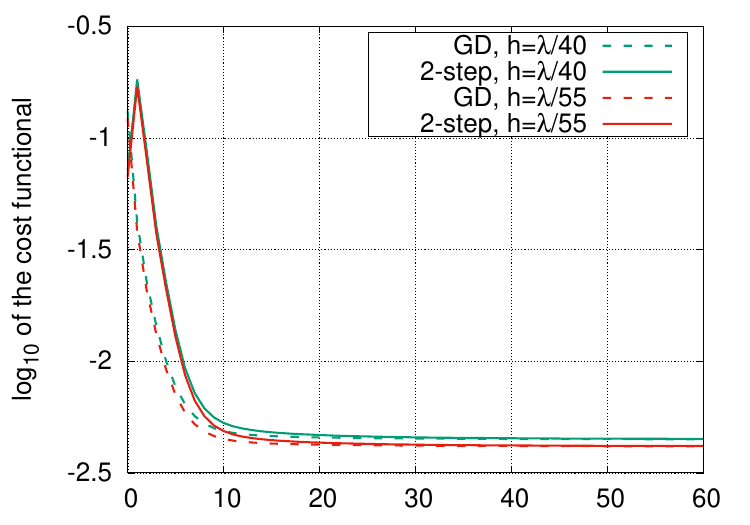}}
							\hfil
							\subfloat[][Relative error on $\sigma$. 
							\label{fig:RelaEn_noise0_tau2_htune}]
							{\includegraphics[width=0.43\textwidth]{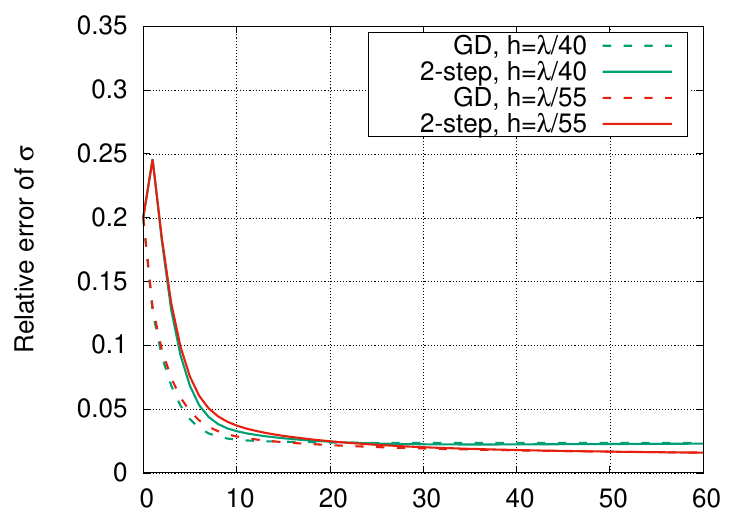}}
							\caption{Result for the experiment in Section \ref{sec:exp3} displaying the convergence rate for semi-implicit $2$-step one-shot and gradient descent with different mesh sizes for the state equations. The descent step is $\tau=2$, the noise level is $\varepsilon=3\%$ and the regularization parameter is $\alpha=2\cdot10^{-4}$.}
							\label{fig:tune_h}
						\end{figure}
						
						\begin{figure}[htbp]
							\centering
							\subfloat[][$\log_{10}$ of the cost functional. 
							\label{fig:Jn_noise0_tau2_delta0.01-0.02}]
							{\includegraphics[width=0.43\textwidth]{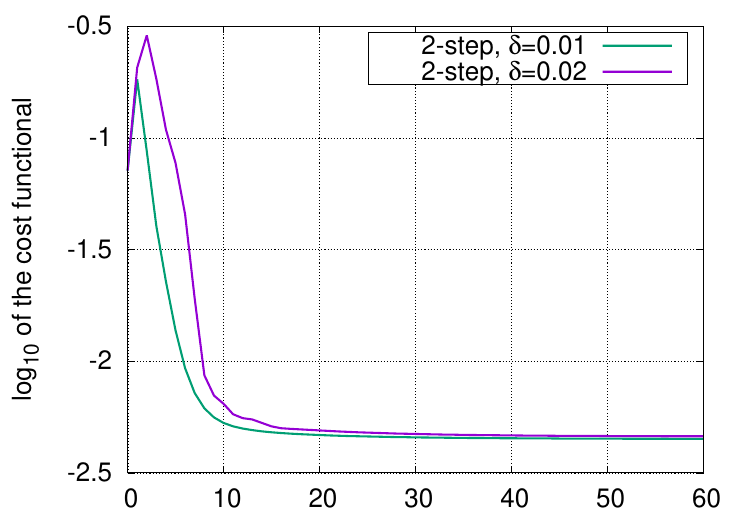}}
							\hfil
							\subfloat[][Relative error on $\sigma$. 
							\label{fig:RelaEn_noise0_tau2_delta0.01-0.02}]
							{\includegraphics[width=0.43\textwidth]{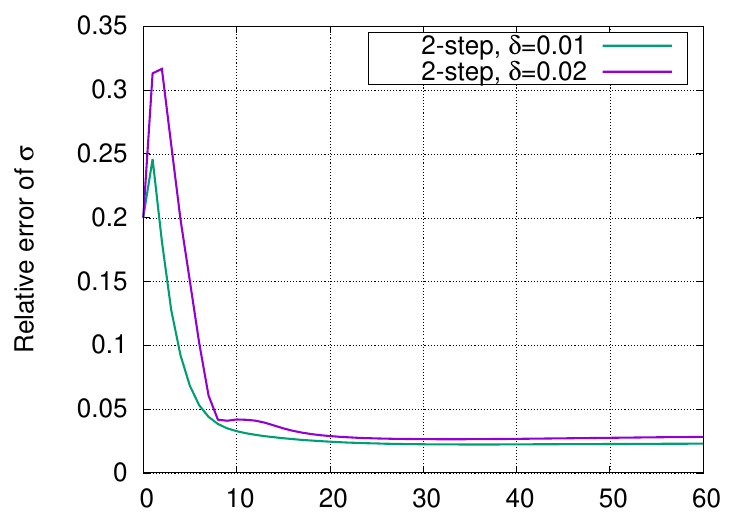}}
							\caption{Result for the experiment in Section \ref{sec:exp4} displaying the dependence of the convergence rate for semi-implicit $2$-step one-shot with respect to $\delta$. The descent step is $\tau=2$, the noise level is $\varepsilon=3\%$ and the regularization parameter is $\alpha=2\cdot10^{-4}$.}
							\label{fig:tune_delta}
						\end{figure}
						
						\section{Conclusion}
						We have proved sufficient conditions on the descent step for the convergence of semi-implicit multi-step one-shot methods, with a regularization parameter $\alpha \ge 0$. This complements the results obtained in our research report \cite{bon22} for explicit schemes with no regularization. Although these bounds on the descent step are not optimal, to our knowledge no other bounds, explicit in the number of inner iterations, are available in the literature for multi-step one-shot methods. Furthermore, we have shown in the numerical experiments that very few inner iterations on the forward and adjoint problems may be enough to guarantee similar results as for the classical gradient descent algorithm.
						
						These preliminary numerical results are just proof of concept
						since the size of the direct problem is not very large. In our future work, we shall carry out more realistic numerical investigation where the iterative solvers are based on domain decomposition methods (see e.g.~\cite{dolean-ddm15}), which are well adapted to large-scale problems. In addition, the inner fixed point iterations could be replaced by more efficient Krylov subspace methods, such as conjugate gradient or GMRES, and one could use L-BFGS instead of gradient descent as optimization algorithm.    
						Another interesting issue is how to adapt the number of inner iterations in the course of the outer iterations. Moreover, based on this linear inverse problem study, we plan to tackle the challenging case of non-linear inverse problems, as studied in the seminal paper \cite{hanke95}.
						
						\bibliographystyle{plain} 
						\bibliography{k-shot.bib}

						\appendix 

						\section{Some useful lemmas}
						\label{app:lems}
						We state auxiliary results about matrices like those appearing in the eigenvalue equations \eqref{eq:seim-1-shot:eq-eigen} and  \eqref{eq:seim-k-shot:eq-eigen}.
						\begin{lemma}\label{lem:inv(I-T)}
							Let $(\C^{n\times n},\norm{\cdot})$ be a normed space and $T\in\C^{n\times n}$. If $\rho(T)<1$, then 
							$$\sum_{k=0}^{\infty}T^k \text{ converges and } \sum_{k=0}^{\infty}T^k=(I-T)^{-1}.$$
							Moreover, if $\norm{T}<1$, $\norm{(I-T)^{-1}}\le \frac{1}{1-\norm{T}}$.	
						\end{lemma}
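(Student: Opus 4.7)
The plan is to treat the two assertions separately, dispatching the second (the bound under $\norm{T}<1$) as a straightforward consequence of the first and the subadditivity of the norm.

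For the first assertion, I would proceed in three steps. First, I would verify that $I-T$ is invertible: since $\rho(T)<1$, the number $1$ is not an eigenvalue of $T$, so $\det(I-T)\ne 0$. Second, I would show that the series $\sum_{k=0}^\infty T^k$ converges absolutely. This is where the main obstacle lies, because the hypothesis is on the spectral radius rather than on an operator norm, and a priori $\norm{T}$ can exceed $1$. My plan is to invoke Gelfand's formula $\rho(T)=\lim_{k\to\infty}\norm{T^k}^{1/k}$: pick any $r$ with $\rho(T)<r<1$; then there exists $K\in\N$ such that $\norm{T^k}\le r^k$ for all $k\ge K$, and absolute convergence of $\sum_k T^k$ follows by comparison with the convergent geometric series $\sum_k r^k$. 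In particular, $\norm{T^k}\to 0$. Third, I would identify the limit of the partial sums with $(I-T)^{-1}$ via the telescoping identity
\[
(I-T)\sum_{k=0}^N T^k=\sum_{k=0}^N T^k-\sum_{k=1}^{N+1}T^k=I-T^{N+1}.
\]
Letting $N\to\infty$, the right-hand side tends to $I$, so $(I-T)S=I$ where $S=\sum_{k=0}^\infty T^k$; by the invertibility of $I-T$, this yields $S=(I-T)^{-1}$.

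For the second assertion, under the stronger hypothesis $\norm{T}<1$ one in particular has $\rho(T)\le\norm{T}<1$, so the first part applies and gives the Neumann representation $(I-T)^{-1}=\sum_{k=0}^\infty T^k$. The submultiplicativity of the induced matrix norm yields $\norm{T^k}\le\norm{T}^k$, hence
\[
\norm{(I-T)^{-1}}\le\sum_{k=0}^\infty\norm{T^k}\le\sum_{k=0}^\infty\norm{T}^k=\frac{1}{1-\norm{T}}.
\]
The essential technical input is Gelfand's formula; everything else reduces to the standard telescoping argument for Neumann series and a geometric-series majorization.
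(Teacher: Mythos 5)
The paper states this lemma without proof, treating it as a classical fact (the Neumann series lemma), so there is no in-paper argument to compare against. Your proof is correct and is the standard one: Gelfand's formula to get summability from $\rho(T)<1$, the telescoping identity to identify the sum with $(I-T)^{-1}$, and submultiplicativity of the induced norm plus the geometric series for the bound $1/(1-\norm{T})$; the only implicit hypothesis you rely on, namely that $\norm{\cdot}$ is submultiplicative, holds for the induced $2$-norm used throughout the paper.
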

						
						\begin{lemma}\label{lem:inv(I-T/z)}
							Let $T\in\C^{n\times n}$ such that $\rho(T)<1$. Set
							\begin{equation}
								\label{lem:supbound}
								s(T) \coloneqq \sup_{z\in\C, |z|\ge 1}\norm{\left(I-T/z\right)^{-1}}
							\end{equation}
							then $0<\norm{(I-T)^{-1}}\le s(T)=s(T^*)<+\infty$. Moreover, if $\norm{T}<1$, $0<s(T)\le\cfrac{1}{1-\norm{T}}$.
						\end{lemma}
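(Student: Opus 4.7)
The plan is to unpack the three assertions in sequence, treating each as a separate small exercise.

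First, I would observe that the hypothesis $\rho(T)<1$ immediately implies $1\notin\mathrm{Spec}(T)$, so $I-T$ is invertible, hence $\norm{(I-T)^{-1}}>0$. More generally, for any $z\in\C$ with $|z|\ge 1$, the spectral radius of $T/z$ equals $\rho(T)/|z|\le\rho(T)<1$, so $I-T/z$ is invertible and $\norm{(I-T/z)^{-1}}$ is a positive real number. Taking $z=1$ in the definition of $s(T)$ then gives the lower inequality $\norm{(I-T)^{-1}}\le s(T)$.

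Next, for the finiteness of $s(T)$, I would argue by a continuity/compactness argument. Since matrix inversion is continuous on the open set of invertible matrices and the map $z\mapsto I-T/z$ is continuous on $\{|z|\ge 1\}$ with values in the invertible matrices by the previous paragraph, the function $f(z)\coloneqq\norm{(I-T/z)^{-1}}$ is continuous on the closed set $\{|z|\ge 1\}$. As $|z|\to\infty$, $T/z\to 0$ so $f(z)\to\norm{I}=1$; in particular, there exists $R>1$ such that $f(z)\le 2$ for $|z|\ge R$. On the compact annulus $\{1\le|z|\le R\}$, $f$ is continuous, hence bounded. Combining, $s(T)<+\infty$.

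For the identity $s(T)=s(T^*)$, the key observation is that $(I-T^*/z)^*=I-T/\bar z$, so $(I-T^*/z)^{-1}=((I-T/\bar z)^{-1})^*$. Using $\norm{A^*}=\norm{A}$ and the fact that $|\bar z|=|z|\ge 1$ iff $|z|\ge 1$, a change of variable $z\mapsto\bar z$ in the supremum yields the equality.

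Finally, in the particular case $\norm{T}<1$, for every $z$ with $|z|\ge 1$ we have $\norm{T/z}=\norm{T}/|z|\le\norm{T}<1$, so Lemma~\ref{lem:inv(I-T)} gives $\norm{(I-T/z)^{-1}}\le 1/(1-\norm{T/z})\le 1/(1-\norm{T})$, and taking the supremum over $|z|\ge 1$ delivers the stated bound. The only mild subtlety in this whole proof is the compactness argument justifying $s(T)<+\infty$ when we only assume $\rho(T)<1$ rather than $\norm{T}<1$; every other step is essentially a one-line manipulation.
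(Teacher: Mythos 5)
Your proof is correct and follows essentially the same route as the paper: invertibility of $I-T/z$ from $\rho(T/z)<1$, the adjoint identity $(I-T^*/z)^{-1}=\bigl((I-T/\bar z)^{-1}\bigr)^*$ together with $\norm{A^*}=\norm{A}$ to get $s(T)=s(T^*)$, and Lemma~\ref{lem:inv(I-T)} applied to $T/z$ for the case $\norm{T}<1$. If anything, your argument for $s(T)<+\infty$ (decay of $\norm{(I-T/z)^{-1}}$ to $1$ at infinity plus compactness of the remaining annulus) is more complete than the paper's, which only invokes well-definedness and continuity of $z\mapsto\norm{(I-T/z)^{-1}}$ on the unbounded set $\{|z|\ge 1\}$ and leaves the behaviour at infinity implicit.
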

						\begin{proof}
							The existence of $s(T)$ (and also $s(T^*)$) is deduced from the fact that the functional $z\mapsto \norm{\left(I-T/z\right)^{-1}}$, with $z\in\C, |z|\ge 1$, is well-defined and continuous. For every $z\in\C, |z|\ge 1$ we have $$\norm{\left(I-T/z\right)^{-1}}=\norm{\left((I-T/z)^{-1}\right)^*}=\norm{(I-T^*/z^*)^{-1}}\le s(T^*)$$
							and
							$$\norm{\left(I-T^*/z\right)^{-1}}=\norm{\left((I-T^*/z)^{-1}\right)^*}=\norm{(I-T/z^*)^{-1}}\le s(T),$$
							thus $s(T)=s(T^*)$. The second part of conclusion is obtained by Lemma~\ref{lem:inv(I-T)}.
						\end{proof}
						
						The following lemma says that, for $T\in\C^{n\times n}$ and $\lambda\in\C, |\lambda|\ge 1$, we can decompose $\bigl(I-\frac{T}{\lambda}\bigr)^{-1}=P(\lambda)+\ic Q(\lambda)$ and $\bigl(I-\frac{T^*}{\lambda}\bigr)^{-1}=P(\lambda)^*+\ic Q(\lambda)^*$, and gives bounds for $P(\lambda)$ and $Q(\lambda)$. 
						\begin{lemma}\label{lem:decomPQ}
							Let $T\in\C^{n\times n}$ such that $\rho(T)<1$ and $\lambda\in\C, |\lambda|\ge 1$. Write $\frac{1}{\lambda}=r(\cos\phi+\ic\sin\phi)$ in polar form, where $0<r\le 1$ and $\phi\in[-\pi,\pi]$. Then
							$$\left(I-\frac{T}{\lambda}\right)^{-1}=P(\lambda)+\ic Q(\lambda)\quad\text{and}\quad\left(I-\frac{T^*}{\lambda}\right)^{-1}=P(\lambda)^*+\ic Q(\lambda)^*$$
							where
							\begin{equation*}
								P(\lambda)=(I-r\cos\phi \, T)(I-2r\cos\phi \, T +r^2 T^2)^{-1} 	\end{equation*}
							and
							\begin{equation*}
								Q(\lambda)=r\sin\phi \, T(I-2r\cos\phi \, T +r^2 T^2)^{-1}
							\end{equation*}
							are $\C^{n\times n}$-valued functions. We also have the following properties:
							
							\begin{enumerate}[label=(\roman*)]
								\item $\norm{P(\lambda)}\le \left(1+\norm{T}\right)s(T)^2$ and $\norm{Q(\lambda)}\le|\sin\phi|\norm{T}s(T)^2\le\norm{T}s(T)^2.$
								\item Moreover if $\norm{T}<1$ then		
								$$\norm{P(\lambda)}\le\cfrac{1}{1-\norm{T}}\quad\mbox{and}\quad \norm{Q(\lambda)} \le\cfrac{\norm{T}}{1-\norm{T}}.$$	
							\end{enumerate} 
						\end{lemma}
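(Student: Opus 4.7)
The key algebraic observation to build on is that $1/\lambda$ and $1/\overline{\lambda}$ are the two roots of the quadratic $z^2 - 2r\cos\phi\, z + r^2 = 0$, so the matrix $D(\lambda) \coloneqq I - 2r\cos\phi\, T + r^2 T^2$ factors as $(I - T/\lambda)(I - T/\overline{\lambda})$. Since $|\lambda| \ge 1 > \rho(T)$ (and likewise for $\overline{\lambda}$), both factors are invertible by Lemma \ref{lem:inv(I-T)}; hence $D(\lambda)$ is invertible with $D(\lambda)^{-1} = (I - T/\lambda)^{-1}(I - T/\overline{\lambda})^{-1}$. My plan is to exploit this factorization throughout.

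For the identity $(I - T/\lambda)^{-1} = P(\lambda) + \ic Q(\lambda)$, I would write $I - T/\lambda = (I - r\cos\phi\, T) - \ic\, r\sin\phi\, T$ and multiply on the right by its ``conjugate'' $(I - r\cos\phi\, T) + \ic\, r\sin\phi\, T = I - T/\overline{\lambda}$. Since both are polynomials in $T$ they commute, and the product is exactly $D(\lambda)$. Inverting gives $(I - T/\lambda)^{-1} = (I - T/\overline{\lambda})\, D(\lambda)^{-1} = P(\lambda) + \ic Q(\lambda)$ with the stated closed-form expressions. For the adjoint identity, the same derivation applied with $\overline{\lambda}$ in place of $\lambda$ (equivalent to replacing $\phi$ by $-\phi$, which keeps $P$ unchanged and flips the sign of $Q$) yields $(I - T/\overline{\lambda})^{-1} = P(\lambda) - \ic Q(\lambda)$. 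Taking the Hermitian adjoint of both sides and using $((I - T/\overline{\lambda})^{-1})^* = (I - T^*/\lambda)^{-1}$ then produces $(I - T^*/\lambda)^{-1} = P(\lambda)^* + \ic Q(\lambda)^*$.

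For the bounds in (i), I would substitute $D(\lambda)^{-1} = (I - T/\lambda)^{-1}(I - T/\overline{\lambda})^{-1}$ to rewrite $P(\lambda) = (I - r\cos\phi\, T)(I - T/\lambda)^{-1}(I - T/\overline{\lambda})^{-1}$ and $Q(\lambda) = r\sin\phi\, T\,(I - T/\lambda)^{-1}(I - T/\overline{\lambda})^{-1}$. Using $r \le 1$ and $|\cos\phi|, |\sin\phi| \le 1$, the prefactors are bounded in norm by $1 + \norm{T}$ and $|\sin\phi|\norm{T}$ respectively, while each of the two inverses is bounded by $s(T)$ by the definition \eqref{lem:supbound} (applied to both $\lambda$ and $\overline{\lambda}$, which both satisfy $|\cdot| \ge 1$). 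This yields precisely $\norm{P(\lambda)} \le (1 + \norm{T}) s(T)^2$ and $\norm{Q(\lambda)} \le |\sin\phi|\norm{T} s(T)^2$.

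For part (ii), under the stronger hypothesis $\norm{T} < 1$, I would instead expand via the Neumann series $(I - T/\lambda)^{-1} = \sum_{k \ge 0} T^k/\lambda^k$, which converges absolutely. Writing $1/\lambda^k = r^k(\cos k\phi + \ic \sin k\phi)$ by de Moivre's formula and separating real and imaginary parts identifies $P(\lambda) = \sum_{k \ge 0} r^k \cos(k\phi)\, T^k$ and $Q(\lambda) = \sum_{k \ge 1} r^k \sin(k\phi)\, T^k$, consistent with the closed-form expressions. Bounding each summand by $\norm{T}^k$ gives the geometric estimates $\norm{P(\lambda)} \le 1/(1-\norm{T})$ and $\norm{Q(\lambda)} \le \norm{T}/(1-\norm{T})$. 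No step here is especially deep; the only delicate points are verifying commutation of the two factors before inverting and carefully tracking sign conventions when passing to the Hermitian adjoint.
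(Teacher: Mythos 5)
Your proposal is correct and follows essentially the same route as the paper: the same factorization $(I-T/\lambda)(I-T/\overline{\lambda})=I-2r\cos\phi\,T+r^2T^2$ for the decomposition, the same use of $s(T)$ via Lemma~\ref{lem:inv(I-T/z)} for (i), and the same Neumann-series representation for (ii). The one step you pass over too quickly is the identification $P(\lambda)=\sum_{k\ge0}r^k\cos(k\phi)T^k$ and $Q(\lambda)=\sum_{k\ge1}r^k\sin(k\phi)T^k$: for a complex matrix $T$, ``separating real and imaginary parts'' of $\sum T^k/\lambda^k$ is only a formal splitting of the scalar coefficients, and a decomposition $A+\ic B$ with $A,B$ real-coefficient series in $T$ need not be unique (consider $T=\ic I$), so it does not by itself identify these series with the closed-form $P(\lambda)$ and $Q(\lambda)$. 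The paper closes this by multiplying each candidate series by $I-2r\cos\phi\,T+r^2T^2$ and checking the products equal $I-r\cos\phi\,T$ and $r\sin\phi\,T$; alternatively you could invoke the corresponding scalar power-series identity and substitute $T$. With that verification added, your argument is complete.
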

						\begin{proof} The first part of the lemma is verified by direct computation, using
							\begin{equation*}
								\begin{array}{rl}
									\left(I-T/\lambda\right)^{-1}=&\left(I-T/\lambda^*\right)\left[\left(I-T/\lambda\right)\left(I-T/\lambda^*\right)\right]^{-1},\quad\\ \left(I-T^*/\lambda\right)^{-1}=&\left[\left(I-T^*/\lambda^*\right)\left(I-T^*/\lambda\right)\right]^{-1}\left(I-T^*/\lambda^*\right)
									\\
									\text{and }
									\left(I-T/\lambda\right)\left(I-T/\lambda^*\right)=&I-2r\cos\phi \, T +r^2 T^2.
								\end{array}
							\end{equation*}
							After that, with the help of Lemma \ref{lem:inv(I-T/z)}, it is not difficult to show the inequalities in (i). To prove (ii), first observe that the two series 
							$$\sum_{k=0}^\infty r^k\cos(k\phi) T^k\quad\mbox{and}\quad\sum_{k=1}^\infty r^k\sin(k\phi) T^k$$
							converge. Then, by expanding and simplifying the left-hand sides, we can show that 
							\begin{equation*}
								\bigl(\displaystyle\sum_{k=0}^\infty r^k\cos(k\phi) T^k\bigr)(I-2r\cos\phi \, T +r^2 T^2)=I-r\cos\phi \, T
							\end{equation*}
							and
							\begin{equation*}
								\bigl(\displaystyle\sum_{k=1}^\infty r^k\sin(k\phi) T^k\bigr)(I-2r\cos\phi \, T +r^2 T^2)= r\sin\phi \, T,
							\end{equation*}
							so $P(\lambda)$ and $Q(\lambda)$ can be expressed as the series above, and the inequalities in (ii) follow.
						\end{proof}
						
						In Sections \ref{subsec:seim-1-shot:loca-eigen} and \ref{subsec:seim-k-shot:complex-eigen} we identify different cases of $\lambda\in\C$ and we need corresponding estimations, given in the  following lemma.
						
						\begin{lemma}
							\label{lem:gamma123}
							For $\lambda\in\C\setminus{\R}, |\lambda|\ge 1$ we write $\lambda=R(\cos\theta+\ic\sin\theta)$ in polar form where $R\ge 1$, $\theta\in(-\pi,\pi)$, $\theta \ne 0$.  
							\begin{enumerate}[label=(\roman*)]
								\item For $\lambda$ satisfying $\Re(\lambda^2-\lambda)\ge 0$, let $\gamma_1=\gamma_1(\lambda)\coloneqq\left\{\begin{array}{cc}
									1 &\text{if } \Im (\lambda^2-\lambda)\ge 0,\\
									-1 &\text{if } \Im (\lambda^2-\lambda)<0
								\end{array}\right.$ then
								$$\Re(\lambda^2-\lambda)+\gamma_1\Im(\lambda^2-\lambda)\ge |\lambda(\lambda-1)|\ge 2|\sin(\theta/2)|.$$
								\item Let $0<\theta_0\le\frac{\pi}{4}$. For $\lambda$ satisfying $\Re(\lambda^2-\lambda)<0$ and $\theta\in[\theta_0,\pi-\theta_0]\cup[-\pi+\theta_0,-\theta_0]$,  let $\gamma_2=\gamma_2(\lambda)\coloneqq\left\{\begin{array}{cc}
									-1 &\text{if } \Im (\lambda^2-\lambda)\ge 0,\\
									1 &\text{if } \Im (\lambda^2-\lambda)<0
								\end{array}\right.$ then 
								$$|\Re(\lambda^2-\lambda)+\gamma_2\Im(\lambda^2-\lambda)| \ge |\lambda(\lambda-1)| \ge 2\sin({\theta_0}/{2}).$$
								\item Let $0<\theta_0\le\frac{\pi}{4}$ and $\delta_0>0$ . 
								For $\lambda$ satisfying $\Re(\lambda^2-\lambda)<0$ and $\theta\in(-\theta_0,\theta_0)\backslash\{0\}$, let $\gamma_3=\gamma_3(\mathrm{sign}(\theta))\coloneqq\left\{\begin{array}{cc}
									\left(\delta_0+\sin\frac{3\theta_0}{2}\right)/\cos\frac{3\theta_0}{2} & \text{if }\theta>0,\\
									-\left(\delta_0+\sin\frac{3\theta_0}{2}\right)/\cos\frac{3\theta_0}{2} & \text{if }\theta<0
								\end{array}\right.$ then
								\begin{equation*}
									\Re(\lambda^2-\lambda)+\gamma_3\Im(\lambda^2-\lambda)\ge 2\delta_0|\sin(\theta/2)|
								\end{equation*}
								and
								\begin{equation*}
									\frac{|\Re(\lambda-1)+\gamma_3\Im(\lambda-1)|}{\Re(\lambda^2-\lambda)+\gamma_3\Im(\lambda^2-\lambda)}\le\frac{\sqrt{1+\gamma_3^2}}{\delta_0}.
								\end{equation*}
								Moreover, if $0<\theta_0<\frac{\pi}{4}$ then
								\begin{equation*}
								\frac{|\gamma_3\Re(\lambda-1)-\Im(\lambda-1)|}{\Re(\lambda^2-\lambda)+\gamma_3\Im(\lambda^2-\lambda)}\le\max\left(\frac{\sqrt{1+\gamma_3^2}}{\delta_0},\frac{\sqrt{1+\gamma_3^2}}{\cos2\theta_0}\right).
								\end{equation*}
								\item Let $0<\theta_0\le\frac{\pi}{4}$. 
								There exists no $\lambda$ satisfying $\Re(\lambda^2-\lambda)<0$ and $\theta \in (\pi-\theta_0,\pi)\cup(-\pi,-\pi+\theta_0)$.
							\end{enumerate}
						\end{lemma}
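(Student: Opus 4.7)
The plan is to work in polar coordinates $\lambda = Re^{i\theta}$ throughout, with $R \ge 1$ and $\theta \in (-\pi,\pi)\setminus\{0\}$. I would start by recording three baseline observations: the factorization $\lambda(\lambda-1) = R^2 e^{i2\theta} - Re^{i\theta}$ with $|\lambda(\lambda-1)| = R|Re^{i\theta}-1|$; the half-angle identity $|e^{i\theta}-1| = 2|\sin(\theta/2)|$; and the monotonicity estimate $|Re^{i\theta}-1| \ge |e^{i\theta}-1|$ for $R \ge 1$, which reduces to checking $(R-1)(R+1-2\cos\theta) \ge 0$. Together these yield the uniform lower bound $|\lambda(\lambda-1)| \ge 2|\sin(\theta/2)|$, which is precisely the right-hand side appearing in (i) and (ii).

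For (i) and (ii), the whole point of the sign choice is that $\gamma_1,\gamma_2 \in \{-1,1\}$ are taken so that the two terms $\Re(\lambda^2-\lambda)$ and $\gamma_j \Im(\lambda^2-\lambda)$ agree in sign: both non-negative in case (i), both non-positive in case (ii). In either case $|\Re + \gamma_j\Im| = |\Re|+|\Im| \ge \sqrt{\Re^2+\Im^2} = |\lambda(\lambda-1)|$ by the elementary identity $(|a|+|b|)^2 \ge a^2+b^2$. The claimed bounds then follow immediately from the baseline, tightened to $2\sin(\theta_0/2)$ in case (ii) via $|\theta| \ge \theta_0$. For (iv) I would just do a direct sign check: for $\theta$ within $\theta_0 \le \pi/4$ of $\pm\pi$, one has $\cos\theta < 0$ and $\cos 2\theta > 0$, so $\Re(\lambda^2-\lambda) = R^2\cos 2\theta - R\cos\theta > 0$, ruling out any such $\lambda$.

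The real work is (iii). My plan is to rewrite $\Re + \gamma_3\Im = \Re[(\lambda^2-\lambda)(1-i\gamma_3)]$ and factor $1-i\gamma_3 = \sqrt{c}\,e^{-i\phi_0}$ with $\tan\phi_0 = \gamma_3$, so that in polar form this expression becomes $R\sqrt{c}\,[R\cos(2\theta-\phi_0) - \cos(\theta-\phi_0)]$. A sum-to-product expansion then produces the key identity
\[
R\cos(2\theta-\phi_0) - \cos(\theta-\phi_0) = (R-1)\cos(2\theta-\phi_0) + 2\sin(\phi_0 - 3\theta/2)\sin(\theta/2).
\]
The point of the particular choice $\gamma_3 = (\delta_0 + \sin(3\theta_0/2))/\cos(3\theta_0/2)$ is the exact calibration $\sin(\phi_0 - 3\theta_0/2) = \delta_0/\sqrt{c}$, which drops out from a direct expansion of $\sin\phi_0\cos(3\theta_0/2) - \cos\phi_0\sin(3\theta_0/2)$ with $\sin\phi_0 = \gamma_3/\sqrt{c}$ and $\cos\phi_0 = 1/\sqrt{c}$; monotonicity of $\sin$ on $(0,\pi/2)$ lifts this to $\sin(\phi_0 - 3\theta/2) \ge \delta_0/\sqrt{c}$ for $|\theta| \le \theta_0$, while $\cos(2\theta-\phi_0) > 0$ follows from $|2\theta-\phi_0| < \pi/2$ (using $\phi_0 < \pi/2$ and $\theta_0 \le \pi/4$). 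This yields the first inequality with a factor $2R \ge 2$ to spare. For the two ratio bounds, I would write $\lambda-1 = \rho e^{i\psi}$, which collapses the two numerators to $\sqrt{c}\rho|\cos(\psi-\phi_0)|$ and $\sqrt{c}\rho|\sin(\psi-\phi_0)|$ respectively, and the denominator to $R\sqrt{c}\rho\cos(\theta+\psi-\phi_0)$; the problem reduces to controlling the angle $\psi-\phi_0$ under the two hypotheses $\Re(\lambda^2-\lambda)<0$ and $|\theta|<\theta_0$. I expect the last bound in (iii) to be the main obstacle: the $\max(\sqrt{c}/\delta_0,\sqrt{c}/\cos 2\theta_0)$ signals a case split depending on whether $|\psi-\phi_0|$ is small or close to $\pi/2$, and this is precisely where the stricter condition $\theta_0 < \pi/4$ (keeping $\cos 2\theta_0 > 0$) becomes essential.
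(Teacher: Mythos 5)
Your treatment of parts (i), (ii) and (iv) is correct and matches the paper's argument in substance: the sign choice of $\gamma_1,\gamma_2$ makes $|\Re(\lambda^2-\lambda)+\gamma\Im(\lambda^2-\lambda)|=|\Re(\lambda^2-\lambda)|+|\Im(\lambda^2-\lambda)|\ge|\lambda^2-\lambda|$, the lower bound $|\lambda-1|\ge 2|\sin(\theta/2)|$ follows from monotonicity in $R$, and (iv) is the same sign check on $\cos2\theta$ and $\cos\theta$. Your first estimate in (iii) is also sound: it is the paper's identity $\cos2\theta-\cos\theta+\gamma_3(\sin2\theta-\sin\theta)=2\sin\frac{\theta}{2}\left(\gamma_3\cos\frac{3\theta}{2}-\sin\frac{3\theta}{2}\right)$ rewritten through the phase $\phi_0=\arctan\gamma_3$, and the calibration $\gamma_3\cos\frac{3\theta_0}{2}-\sin\frac{3\theta_0}{2}=\delta_0$ is exactly the point of the definition of $\gamma_3$.

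The genuine gap is the pair of ratio bounds in (iii), which are the technically hardest claims of the lemma and which your proposal does not prove. Writing $\lambda-1=\rho e^{\ic\psi}$ and reducing the first ratio to $\frac{|\cos(\psi-\phi_0)|}{R\cos(\theta+\psi-\phi_0)}$ is a legitimate reformulation, but it does not by itself ``reduce the problem to controlling the angle $\psi-\phi_0$'': the angle $\psi$ is coupled to $R$ (it sweeps from $\frac{\pi}{2}+\frac{\theta}{2}$ at $R=1$ down to $\theta$ as $R\to\infty$), the factor $\cos(\theta+\psi-\phi_0)$ is not uniformly bounded away from zero, and the crude estimate (numerator $\le\sqrt{1+\gamma_3^2}\,|\lambda-1|$ over denominator $\ge 2\delta_0|\sin(\theta/2)|$) fails because $|\lambda-1|$ is unbounded. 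What is actually needed --- and what the paper supplies --- is a monotonicity analysis in $R$ at fixed $\theta$: setting $a=\cos\theta+\gamma_3\sin\theta$ and $b=\cos2\theta+\gamma_3\sin2\theta$, one checks $a>0$, $b>0$, $a^2>b$ and $a/b<1$, so that $f_1(R)=\bigl(\frac{aR-1}{bR-a}\bigr)^2$ is decreasing on $R\ge1$ and its supremum $f_1(1)$ is bounded by $(1+\gamma_3^2)/\delta_0^2$ via Cauchy--Schwarz together with $-\gamma_3\sin\frac{3\theta}{2}+\gamma_3^2\cos\frac{3\theta}{2}>\delta_0|\gamma_3|$; for the second ratio the analogous $f_2(R)$ is not monotone (it decreases then increases), so one must bound both $f_2(1)$ and $\lim_{R\to\infty}f_2(R)$, which is precisely where the two entries of the $\max$ and the requirement $\cos2\theta_0>0$ enter. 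None of this analysis, nor an equivalent substitute, appears in your proposal --- you explicitly defer the second bound and do not establish the first --- so as it stands the proof of (iii) is incomplete.
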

						\begin{proof}
							Notice that $\gamma_1^2=1$, $\gamma_1\Im(\lambda^2-\lambda)\ge 0$. We have
							\begin{multline*}
								\left[\Re(\lambda^2-\lambda)+\gamma_1\Im(\lambda^2-\lambda)\right]^2
								=\left[\Re(\lambda^2-\lambda)\right]^2+\left[\Im(\lambda^2-\lambda)\right]^2 +2\gamma_1 \Re(\lambda^2-\lambda)\Im(\lambda^2-\lambda)\\
								\ge \left[\Re(\lambda^2-\lambda)\right]^2+\left[\Im(\lambda^2-\lambda)\right]^2
								=|\lambda^2-\lambda|^2,\\
							\end{multline*}
							which yields $\Re(\lambda^2-\lambda)+\gamma_1\Im(\lambda^2-\lambda)\ge |\lambda(\lambda-1)|$. Finally,
							$$|\lambda-1|=|R\cos\theta-1+\ic R\sin\theta|=\sqrt{R^2+1-2R\cos\theta}\ge \sqrt{2-2\cos\theta} = 2\left|\sin\frac{\theta}{2}\right|$$
							since the function $R\mapsto R^2+1-2R\cos\theta$, for $R\ge 1$, is increasing.
							
							\noindent (ii) In this case we have $\frac{\theta}{2}\in \left[\frac{\theta_0}{2},\frac{\pi}{2}-\frac{\theta_0}{2}\right]\cup\left[-\frac{\pi}{2}+\frac{\theta_0}{2},-\frac{\theta_0}{2}\right]$ so $\left|\sin\frac{\theta}{2}\right|\ge\sin\frac{\theta_0}{2}$. We also notice that $\gamma_2^2=1$ and $\gamma_2\Im(\lambda^2-\lambda)\le 0$. Similar to (i), we have
							$|\Re(\lambda^2-\lambda)+\gamma_2\Im(\lambda^2-\lambda)|=-\Re(\lambda^2-\lambda)-\gamma_2\Im(\lambda^2-\lambda)\ge |\lambda(\lambda-1)|\ge 2|\sin(\theta/2)|$, that implies the conclusion.
							
							\noindent (iii) Note that $\cos2\theta>0, -\frac{\pi}{2}<2\theta<\frac{\pi}{2}$, and $\sin 2\theta$ has the same sign as $\theta$ and $\gamma_3$, so we have 
							$$\begin{array}{ll}
								\Re(\lambda^2-\lambda)+\gamma_3\Im(\lambda^2-\lambda)&=R(R\cos 2\theta-\cos\theta+\gamma_3 R\sin 2\theta-\gamma_3\sin \theta)\\
								&\ge \cos 2\theta-\cos \theta+\gamma_3\sin 2\theta-\gamma_3\sin\theta\\
								&= -2\sin\frac{3\theta}{2}\sin\frac{\theta}{2}+2\gamma_3\cos\frac{3\theta}{2}\sin\frac{\theta}{2} \\
								&= 2\sin\frac{\theta}{2}\left(\gamma_3\cos\frac{3\theta}{2}-\sin\frac{3\theta}{2}\right).
							\end{array}
							$$
							Then we consider two cases: if $0<\theta<\theta_0$ then $\gamma_3>0$, $\left|\sin\frac{\theta}{2}\right|=\sin\frac{\theta}{2}>0$, 
							$0<\frac{3\theta}{2}<\frac{3\theta_0}{2}<\frac{\pi}{2}$ and $\gamma_3\cos\frac{3\theta}{2}-\sin\frac{3\theta}{2}>\gamma_3\cos\frac{3\theta_0}{2}-\sin\frac{3\theta_0}{2}=\delta_0$;	if $-\theta_0<\theta<0$ then $-\gamma_3>0$, $\left|\sin\frac{\theta}{2}\right|=-\sin\frac{\theta}{2}>0$, 
							$-\frac{\pi}{2}<-\frac{3\theta_0}{2}<\frac{3\theta}{2}<0$ and $-\gamma_3\cos\frac{3\theta}{2}+\sin\frac{3\theta}{2}>-\gamma_3\cos\frac{3\theta_0}{2}-\sin\frac{3\theta_0}{2}=\delta_0$.
							
							Next, we will show that $\frac{|\Re(\lambda-1)+\gamma_3\Im(\lambda-1)|}{\Re(\lambda^2-\lambda)+\gamma_3\Im(\lambda^2-\lambda)}$ is bounded. First,
							\begin{multline*}
								\frac{|\Re(\lambda-1)+\gamma_3\Im(\lambda-1)|}{\Re(\lambda^2-\lambda)+\gamma_3\Im(\lambda^2-\lambda)}=\frac{|(\cos\theta+\gamma_3\sin\theta)R-1|}{R[(\cos 2\theta+\gamma_3\sin 2\theta)R-(\cos\theta+\gamma_3\sin\theta)]}
								\\
								\le \frac{|(\cos\theta+\gamma_3\sin\theta)R-1|}{(\cos 2\theta+\gamma_3\sin2\theta)R-(\cos\theta+\gamma_3\sin\theta)}.
							\end{multline*}
							Since $\gamma_3$ does not depend on $R$, let us study $f_1(R)\coloneqq\left(\frac{aR-1}{bR-a}\right)^2$ where
							$a\coloneqq\cos\theta+\gamma_3\sin\theta$ and $b\coloneqq\cos 2\theta+\gamma_3\sin 2\theta$. We observe that:
							\begin{itemize}
								\item $a>0$ and $b>0$. Indeed, $\cos\theta>0$, $\cos 2\theta>0$, and $\theta$ and $\gamma_3$ have the same sign.
								\item $bR-a>0$ since $\Re(\lambda^2-\lambda)+\gamma_3\Im(\lambda^2-\lambda)>0$, thus  $R>\frac{a}{b}$.
								\item $a^2>b$ (equivalently $\frac{a}{b}>\frac{1}{a}$), since $a^2=\cos^2\theta+\gamma_3^2\sin^2\theta+\gamma_3\sin 2\theta>\cos^2\theta-\sin^2\theta+\gamma_3\sin 2\theta=b$. 
							\end{itemize}
							Now, $f_1'(R)=2\cdot\frac{aR-1}{bR-a}\cdot\frac{b-a^2}{(bR-a)^2}<0$
							for $R>\frac{a}{b}>\frac{1}{a}$ and we would like to have $\frac{a}{b}<1$ so that $f_1(R)\le f_1(1), \forall R\ge 1$. Indeed $\frac{a}{b}< 1$ is equivalent to
							$$\cos\theta+\gamma_3\sin\theta < \cos2\theta+\gamma_3\sin2\theta  \Leftrightarrow|\gamma_3|>\frac{\left|\sin\frac{3\theta}{2}\right|}{\cos\frac{3\theta}{2}},$$
							which is true since
							$$|\gamma_3|=\frac{\delta_0+\sin\frac{3\theta_0}{2}}{\cos\frac{3\theta_0}{2}}>\frac{\left|\sin\frac{3\theta}{2}\right|}{\cos\frac{3\theta}{2}}+\varepsilon_0 \quad\mbox{where}\quad\varepsilon_0=\frac{\delta_0}{\cos\frac{3\theta_0}{2}}.$$
							Then we study
							$$f_1(1)=\left( \frac{\cos\theta-1+\gamma_3\sin\theta}{\cos2\theta-\cos\theta+\gamma_3(\sin2\theta-\sin\theta)}\right)^2=\left(\cfrac{-\sin\frac{\theta}{2}+\gamma_3\cos\frac{\theta}{2}}{-\gamma_3\sin\frac{3\theta}{2}+\gamma_3^2\cos\frac{3\theta}{2}}\right)^2\gamma_3^2.
							$$
							We have:
							\begin{itemize}
								\item $(-\sin\frac{\theta}{2}+\gamma_3\cos\frac{\theta}{2})^2\le 1+\gamma_3^2$ by Cauchy-Schwartz inequality;
								\item $\gamma_3^2=|\gamma_3|^2>\frac{\gamma_3\sin\frac{3\theta}{2}}{\cos\frac{3\theta}{2}}+\varepsilon_0|\gamma_3|$ that leads to
								\begin{equation*}
									-\gamma_3\sin\frac{3\theta}{2}+\gamma_3^2\cos\frac{3\theta}{2}>\varepsilon_0\cos\frac{3\theta}{2}|\gamma_3|=\delta_0|\gamma_3|;
								\end{equation*}
							\end{itemize}
							hence $f_1(1)\le\frac{1+\gamma_3^2}{\delta_0^2}$ and this yields $\frac{|\Re(\lambda-1)+\gamma_3\Im(\lambda-1)|}{\Re(\lambda^2-\lambda)+\gamma_3\Im(\lambda^2-\lambda)}\le\frac{\sqrt{1+\gamma_3^2}}{\delta_0}$. 
							
							Finally, we show that if $0<\theta_0<\frac{\pi}{4}$ then $\frac{|\gamma_3\Re(\lambda-1)-\Im(\lambda-1)|}{\Re(\lambda^2-\lambda)+\gamma_3\Im(\lambda^2-\lambda)}$ is bounded. Indeed, we have
							\begin{multline*}
								\cfrac{|\gamma_3\Re(\lambda-1)-\Im(\lambda-1)|}{\Re(\lambda^2-\lambda)+\gamma_3\Im(\lambda^2-\lambda)}=\cfrac{|(\gamma_3\cos\theta-\sin\theta)R-\gamma_3|}{R[(\cos 2\theta+\gamma_3\sin 2\theta)R-(\cos\theta+\gamma_3\sin\theta)]}
								\\
								\le \cfrac{|(\gamma_3\cos\theta-\sin\theta)R-\gamma_3|}{(\cos 2\theta+\gamma_3\sin2\theta)R-(\cos\theta+\gamma_3\sin\theta)}.
							\end{multline*}
							Since $\gamma_3$ does not depend on $R$, let us study $f_2(R)\coloneqq\left(\frac{cR-\gamma_3}{bR-a}\right)^2$
							where $c\coloneqq\gamma_3\cos\theta-\sin\theta$ and $a, b$ as above. We observe that:
							\begin{itemize}
								\item $\gamma_3b-ca$ and $\theta$ have the same sign. Indeed, $\gamma_3b-ca=(\gamma_3^2+1)\sin\theta\cos\theta$. Consequently, we always have $(\gamma_3b-ca)\gamma_3>0$.
								
								\item We always have $\frac{\gamma_3}{c}>1$. 	Indeed, if $\theta>0$ then $c>0$ since $\gamma_3=\frac{\delta_0+\sin\frac{3\theta_0}{2}}{\cos\frac{3\theta_0}{2}}>\frac{\sin\theta}{\cos\theta}$, also 
								$\frac{\gamma_3}{c}=\frac{\gamma_3}{\gamma_3\cos\theta-\sin\theta}>1$; if $\theta<0$ then $c<0$ since 
								$-\gamma_3=\frac{\delta_0+\sin\frac{3\theta_0}{2}}{\cos\frac{3\theta_0}{2}}>-\frac{\sin\theta}{\cos\theta}$,	also $\frac{\gamma_3}{c}=\frac{-\gamma_3}{-\gamma_3\cos\theta+\sin\theta}>1$.
							\end{itemize}
							Now, $f_2'(R)=2\cdot\frac{\frac{c}{\gamma_3}R-1}{bR-a}\cdot\frac{(\gamma_3b-ca)\gamma_3}{(bR-a)^2}$, so, thanks to the above results, $f_2(R)$ decreases for $1\le R < \frac{\gamma_3}{c}$ and increases for $R > \frac{\gamma_3}{c}$.  
							Moreover, like for $f_1(1)$, we can estimate
							$$f_2(1)=\left(\cfrac{-\cos\frac{\theta}{2}-\gamma_3\sin\frac{\theta}{2}}{-\gamma_3\sin\frac{3\theta}{2}+\gamma_3^2\cos\frac{3\theta}{2}}\right)^2\gamma_3^2\le\cfrac{1+\gamma_3^2}{\delta_0^2}
							$$
							and $\lim_{R\to+\infty}f_2(R)=\left(\frac{\gamma_3\cos\theta-\sin\theta}{\cos2\theta+\gamma_3\sin2\theta}\right)^2\le \frac{1+\gamma_3^2}{\cos2\theta_0}$. Therefore
							$$\frac{|\gamma_3\Re(\lambda-1)-\Im(\lambda-1)|}{\Re(\lambda^2-\lambda)+\gamma_3\Im(\lambda^2-\lambda)}\le\max\left(\frac{\sqrt{1+\gamma_3^2}}{\delta_0},\frac{\sqrt{1+\gamma_3^2}}{\cos2\theta_0}\right).$$
							
							\noindent (iv)  For $\theta \in (\pi-\theta_0,\pi)\cup(-\pi,-\pi+\theta_0)$, we have $\cos 2\theta>0$ since $2\theta \in\left(\frac{3\pi}{2},2\pi\right)\cup\left(-2\pi,-\frac{3\pi}{2}\right)$, while $\cos \theta<0$. 
							Hence $\Re(\lambda^2-\lambda)=R(R\cos 2\theta-\cos\theta)>0$.
						\end{proof}

					\end{document}